\newif\ifpictures
\picturestrue

\documentclass[12pt]{amsart}
\usepackage{amssymb,amsmath}
 \usepackage{amsopn}
 \usepackage{mathabx}
 \usepackage{mathtools}
 \usepackage{xspace}
  \usepackage{hyperref}
 \usepackage[dvips]{graphicx}
\usepackage[arrow,matrix,curve]{xy}
\usepackage {color, tikz}
\usepackage{wasysym}

\headheight=8pt
\topmargin=30pt 
\textheight=611pt     \textwidth=456pt
\oddsidemargin=6pt   \evensidemargin=6pt

\numberwithin{equation}{section}
\newtheorem{thm}{Theorem}
\newtheorem{prop}[thm]{Proposition}
\newtheorem{lemma}[thm]{Lemma}
\newtheorem{cor}[thm]{Corollary}

\newtheorem{fact}[thm]{Fact}

\theoremstyle{definition}

\newtheorem{definition}[thm]{Definition}
\newtheorem{example}[thm]{Example}

\numberwithin{thm}{section}

\newcounter{FNC}[page]
\def\newfootnote#1{{\addtocounter{FNC}{2}$^\fnsymbol{FNC}$%
     \let\thefootnote\relax\footnotetext{$^\fnsymbol{FNC}$#1}}}

\newcommand{\N}{\mathbb{N}}

\newcommand{\R}{\mathbb{R}}

\newcommand{\Z}{\mathbb{Z}}

\newcommand{\lf}{\left}
\newcommand{\ri}{\right}

\newcommand{\Lera}{\Leftrightarrow}

\newcommand{\eps}{\varepsilon}

\newcommand{\alp}{\alpha}

\definecolor{DarkGreen}{rgb}{0,0.65,0}

\newcommand{\assum}{{\color{red} \sf $(\clubsuit)$}}

\definecolor{NiceBlue}{rgb}{0.2,0.2,0.75}
\newcommand{\struc}[1]{{\color{NiceBlue} #1}}
\newcommand{\alert}[1]{{\color{red} #1}}

\DeclareMathOperator{\conv}{conv}

\DeclareMathOperator{\New}{New}

\DeclareMathOperator{\sign}{sign}

\def\endexa{\hfill$\hexagon$}

\DeclarePairedDelimiter{\ceil}{\lceil}{\rceil}

\title[An Approach to Constrained Polynomial Optimization via SONC and GP]{An Approach to Constrained Polynomial Optimization via Nonnegative Circuit Polynomials and Geometric Programming}

\author{Mareike Dressler} \author{Sadik Iliman} \author{Timo de Wolff}

\address{Mareike Dressler, Goethe-Universit\"at, FB 12 -- Institut f\"ur Mathematik,
Postfach 11 19 32, 60054 Frankfurt am Main, Germany\medskip}
\email{dressler@math.uni-frankfurt.de}

\address{Sadik Iliman
Frankfurt am Main, Germany\medskip}
\email{sadik.iliman@gmx.net}

\address{Timo de Wolff, Technische Universit\"at Berlin, Institut f\"ur Mathematik, Stra\ss{}e des 17. Juni 136, 10623 Berlin,
 Germany\medskip}

\email{dewolff@math.tu-berlin.de}

\subjclass[2010]{12D15, 14P99, 52B20, 90C25}
\keywords{Certificate, geometric programming, nonnegative polynomial, semidefinite programming, sum of nonnegative circuit polynomials, sum of squares, triangulation}

\begin{document}

\begin{abstract}
In this article we combine two developments in polynomial optimization. On the one hand, we consider nonnegativity certificates based on sums of nonnegative circuit polynomials, which were recently introduced by the second and the third author. On the other hand, we investigate geometric programming methods for constrained polynomial optimization problems, which were recently developed by Ghasemi and Marshall. We show that the combination of both results yields a new method to solve certain classes of constrained polynomial optimization problems. We test the new method experimentally and compare it to semidefinite programming in various examples.
\end{abstract}

\maketitle

\section{Introduction}

Solving polynomial optimization problems is a key challenge in countless applications like dynamical systems, robotics, control theory, computer vision, signal processing, and economics; e.g. \cite{Blekherman:Parrilo:Thomas,Lasserre:NonnegativeApplications}. It is well-known that polynomial optimization problems are NP-hard in general both in the constrained and in the unconstrained case \cite{Dickinson:Gijben}. Starting with the seminal work of Lasserre in \cite{Lasserre:FirstLasserreRelaxation}, relaxation methods were developed which are significantly faster and provide lower bounds. These methods were studied intensively by means of aspects like exactness and quality of the relaxations \cite{Laurent:deKlerk,Nie:CertifyingLasserre,Nie:Jacobian,Nie:OptimalityConditions}, the speed of the computations \cite{Lasserre:NonnegativeApplications,Parrilo:Sturmfels:MinimizingPolynomialFunctions}, and geometrical aspects of the underlying structures \cite{Blekherman:Volume,Blekherman:PSDandSOS}. A great majority of these results are based on the original approach by Lasserre, called \struc{\emph{Lasserre relaxation}}, which relies on \struc{\textit{semidefinite programming (SDP)}} methods and \struc{\textit{sums of squares (SOS)}} certificates to provide lower bounds for polynomial optimization problems. SDPs can be solved in polynomial time in problem size (up to an $\eps$-error); e.g. \cite[p. 41]{Blekherman:Parrilo:Thomas} and references therein. However, the size of such programs grows exponentially with the number of variables $n$ or the degree $d$ of the polynomials, as its size is given by the number of monomials of $n$-variate monomials of degree at most $d$.

Recently, Ghasemi and Marshall suggested a promising alternative approach both for constrained and unconstrained optimization problems based on \struc{\emph{geometric programming (GP)}} \cite{Ghasemi:Marshall:GP, Ghasemi:Marshall:GP:Semialgebraic}.
GPs can also be solved in polynomial time (up to an $\eps$-error) \cite{nesterov}; see also \cite[Page 118]{Boyd:Kim:Vandenberghe:TutorialOnGP}, but, by experimental results, e.g. \cite{Boyd:Kim:Vandenberghe:TutorialOnGP,Ghasemi:Marshall:GP, Ghasemi:Marshall:GP:Semialgebraic,Ghasemi:Marshall:Lasserre:GP}, in practice the corresponding geometric programs can be solved \textit{significantly} faster than their counterparts in semidefinite programming. 
The lower bounds obtained by Ghasemi and Marshall are, however, by construction worse than lower bounds obtained via semidefinite programming, and they can only be applied in very special cases.

Independent of Ghasemi and Marshall, the second and the third author recently developed a new certificate for nonnegativity of real polynomials called \struc{\textit{sums of nonnegative circuit polynomials (SONC)}} \cite{Iliman:deWolff:Circuits}. SONC certificates are independent of SOS certificates. In \cite{Iliman:deWolff:GP} the second and third author showed that the GP based approach for unconstrained optimization by Ghasemi and Marshall can be generalized crucially via SONC certificates. In consequence, the presented geometric programs are linked to sums of nonnegative circuit polynomials similarly as semidefinite programming relaxations are linked to sums of squares.
Particularly, there exist various classes of polynomials for which the GP/SONC based approach is not only \textit{faster} but, it also yields \textit{better} bounds than the SDP/SOS approach. The reason is that all certificates used by Ghasemi and Marshall are always SOS, while SONCs are not SOS in general; see \cite[Proposition 7.2]{Iliman:deWolff:Circuits}.\\

The first contribution of this article is an extension of the results in \cite{Iliman:deWolff:GP} to constrained polynomial optimization problems. We focus on the class of \struc{\textit{ST-polynomials}}, that are polynomials which have a Newton polytope that is a simplex and which are satisfying some further conditions; see Section \ref{SubSec:PrelimSONC}. The starting point is a general optimization problem from \cite[Section 5]{Iliman:deWolff:GP}, see \eqref{Equ:Constrained}, which provides a lower bound for the constrained problem but which is not a geometric program. Using results from \cite{Ghasemi:Marshall:GP:Semialgebraic}, we relax the program \eqref{Equ:Constrained} into a geometric optimization problem; see program \eqref{Equ:ConstrainedGP} and Theorem \ref{Thm:ConstrainedGP}.
Additionally, we show in Theorem \ref{Thm:ConstrainedSP} that \eqref{Equ:Constrained} can always at least be transformed into a signomial program; see Section \ref{SubSec:PrelimGP} for background information. 
Furthermore, we prove that the new, relaxed geometric program  \eqref{Equ:ConstrainedGP} provides bounds as good as the initial program \eqref{Equ:Constrained} for certain special cases, see Theorem \ref{Thm:ConstrainedGPequality}.

In Section \ref{Sec:ExamplesConstrained}, we provide examples testing our new program \eqref{Equ:ConstrainedGP} in practice and comparing it with semidefinite programming. Moreover, we demonstrate that increasing the degree of a given problem has almost \textit{no} effect on the runtime of our program \eqref{Equ:ConstrainedGP}. This is in sharp contrast to SDPs, where one can nullify increased runtimes induced by high degrees only by additional pre-processing methods, e.g. by exploiting sparsity.

Furthermore, a bound obtained by Ghasemi and Marshall in \cite{Ghasemi:Marshall:GP:Semialgebraic} can never be better than the bound given by the $d$-th Lasserre relaxation for some specific $d$ determined by the degrees of the involved polynomials. In Section \ref{Sec:ExamplesConstrained} we provide examples showing that our program \eqref{Equ:ConstrainedGP} can provide bounds which are better than the particularly $d$-th Lasserre relaxations.\\

The second contribution of this article is to apply polynomial optimization methods based on SONCs and GPs efficiently \textit{beyond} the class of ST-polynomials. In Section \ref{Sec:NonSTPolynomials}, we develop an initial approach based on triangulations of support sets of the involved polynomials. It yields bounds for nonnegativity based on SONC/GP for \textit{arbitrary} polynomials both in the constrained and in the unconstrained case. We provide several examples and compare the new bounds to the ones obtained by SDP based methods.\\

\subsection*{Acknowledgements}

We thank Henning Seidler for various comments and his support on some of the computations. We also thank the anonymous referees, who helped us to improve the article significantly.

The third author was partially supported by the DFG grant WO 2206/1-1.

\section{Preliminaries}
\label{Sec:Preliminaries}

In this section we recall key results about sums of nonnegative circuit polynomials (SONCs) and geometric programming (GP), which are used in this article. 

\subsection{The Cone of Sums of Nonnegative Circuit Polynomials}
\label{SubSec:PrelimSONC}

We denote by \struc{$\R[\mathbf{x}] = \R[x_1,\ldots,x_n]$} the vector space of real $n$-variate polynomials.
Let \struc{$\delta_{ij}$} be the $ij$-Kronecker symbol, let $\struc{\mathbf{e}_i}=(\delta_{i1},\ldots,\delta_{in})$ be the $i$-th standard vector, and let $\struc{A} \subset \N^n$ be a finite set. We denote by \struc{$\conv(A)$}  the convex hull of $A$ and by \struc{$V(A)$} the vertices of $\conv(A)$. We consider polynomials $f \in \R[\mathbf{x}]$ supported on $A$. That is, $f$ is of the form $\struc{f(\mathbf{x})} = \sum_{\boldsymbol{\alp} \in A}^{} f_{\boldsymbol{\alp}}\mathbf{x}^{\boldsymbol{\alp}}$ with $\struc{f_{\boldsymbol{\alp}}} \in \R$, $\struc{\mathbf{x}^{\boldsymbol{\alp}}} = x_1^{\alp_1} \cdots x_n^{\alp_n}$.  We call a lattice point \struc{\textit{even}} if it is in $(2\N)^n$. Furthermore, we denote the Newton polytope of $f$ as $\struc{\New(f)} = \conv\{\boldsymbol{\alp} \in \N^n : f_{\boldsymbol{\alp}} \neq 0\}$.

For a given $A \subset \N^n$ we define $\struc{\Delta(A)} = A \setminus V(A)$. Let $f$ be as before. We denote by $\Delta(f)$ the elements of $\Delta(A)$ which appear as exponents of non-zero terms, that are no monomial squares. I.e., we have
\begin{eqnarray*}
 \struc{\Delta(f)} & = & \{\boldsymbol{\alp} \in \Delta(A) \ : \ f_{\boldsymbol{\alp}} \neq 0 \text{ and } (f_{\boldsymbol{\alp}} < 0 \text{ or }  \boldsymbol{\alp} \notin (2\N)^n)\}.
\end{eqnarray*}

\medskip

A polynomial is nonnegative on the entire $\R^n$ only if the following necessary conditions are satisfied; see e.g. \cite{Reznick:ExtremalPSD}.

\begin{prop}
	Let $A \subset \N^n$ be a finite set and $f \in \R[\mathbf{x}]$ be supported on $A$ such that $\New(f) = \conv(A)$. Then $f$ is nonnegative on $\R^n$ only if:
	\begin{enumerate}
		\item All elements of $V(A)$ are even.
		\item If $\boldsymbol{\alp} \in V(A)$, then the corresponding coefficient $f_{\boldsymbol{\alp}}$ is strictly positive.
	\end{enumerate}
	In other words, if $\boldsymbol{\alp} \in V(A)$, then the term $f_{\boldsymbol{\alp}} \mathbf{x}^{\boldsymbol{\alp}}$ has to be a \struc{\emph{monomial square}}.
	\label{Prop:NecessaryConditions}
\end{prop}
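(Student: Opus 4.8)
The plan is to isolate a vertex $\boldsymbol{\alpha}$ of $\conv(A)$ by a linear functional and then to let a suitable monomial curve run to infinity, so that the term $f_{\boldsymbol{\alpha}}\mathbf{x}^{\boldsymbol{\alpha}}$ dominates $f$ along that curve, which forces a sign condition on $f_{\boldsymbol{\alpha}}$. First, fix a vertex $\boldsymbol{\alpha} \in V(A)$. Since $\boldsymbol{\alpha}$ is an extreme point of the polytope $\conv(A)$ and $A$ is finite, there exists $w \in \R^n$ with $\langle w, \boldsymbol{\alpha} \rangle > \langle w, \boldsymbol{\beta} \rangle$ for all $\boldsymbol{\beta} \in A \setminus \{\boldsymbol{\alpha}\}$. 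Moreover, since $\New(f) = \conv(A)$, every vertex of $\conv(A)$ lies in the support of $f$, hence $f_{\boldsymbol{\alpha}} \neq 0$.

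Next, for a sign vector $\boldsymbol{\eps} = (\eps_1, \ldots, \eps_n) \in \{-1,1\}^n$ and $t > 0$, I would substitute $x_i = \eps_i\, t^{w_i}$, obtaining
\[
  f\bigl(\eps_1 t^{w_1}, \ldots, \eps_n t^{w_n}\bigr) \;=\; \sum_{\boldsymbol{\beta} \in A} f_{\boldsymbol{\beta}}\, \boldsymbol{\eps}^{\boldsymbol{\beta}}\, t^{\langle w, \boldsymbol{\beta} \rangle},
  \qquad \text{where } \boldsymbol{\eps}^{\boldsymbol{\beta}} := \prod_{i=1}^n \eps_i^{\beta_i} \in \{-1,1\}.
\]
By the choice of $w$, the exponent $\langle w, \boldsymbol{\alpha}\rangle$ strictly exceeds every other exponent in this sum, so $t^{-\langle w, \boldsymbol{\alpha}\rangle}\, f(\eps_1 t^{w_1},\ldots,\eps_n t^{w_n}) \to f_{\boldsymbol{\alpha}}\,\boldsymbol{\eps}^{\boldsymbol{\alpha}}$ as $t \to \infty$. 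Hence, if $f$ is nonnegative on $\R^n$, then $f_{\boldsymbol{\alpha}}\,\boldsymbol{\eps}^{\boldsymbol{\alpha}} \geq 0$, and since $f_{\boldsymbol{\alpha}} \neq 0$ in fact $f_{\boldsymbol{\alpha}}\,\boldsymbol{\eps}^{\boldsymbol{\alpha}} > 0$, for \emph{every} sign vector $\boldsymbol{\eps}$.

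It remains to read off the two claims. If some coordinate $\alpha_j$ were odd, then $\boldsymbol{\eps} = (1,\ldots,1)$ gives $\boldsymbol{\eps}^{\boldsymbol{\alpha}} = 1$ while the sign vector with $\eps_j = -1$ and $\eps_i = 1$ for $i \neq j$ gives $\boldsymbol{\eps}^{\boldsymbol{\alpha}} = (-1)^{\alpha_j} = -1$; then $f_{\boldsymbol{\alpha}} > 0$ and $-f_{\boldsymbol{\alpha}} > 0$ simultaneously, a contradiction. Thus $\boldsymbol{\alpha} \in (2\N)^n$, which is assertion (1). Given (1), $\boldsymbol{\eps}^{\boldsymbol{\alpha}} = 1$ for all $\boldsymbol{\eps}$, so $f_{\boldsymbol{\alpha}}\,\boldsymbol{\eps}^{\boldsymbol{\alpha}} > 0$ simply says $f_{\boldsymbol{\alpha}} > 0$, which is assertion (2); equivalently, $f_{\boldsymbol{\alpha}}\mathbf{x}^{\boldsymbol{\alpha}}$ is a monomial square. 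The one point demanding a little care is the existence of the strictly separating functional $w$ — the standard fact that each vertex of $\conv(A)$ for finite $A$ is exposed by some linear functional — while the asymptotic domination argument is routine; there is no substantial obstacle beyond bookkeeping the sign vectors correctly.
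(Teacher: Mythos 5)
Your proof is correct and complete: the exposed-vertex functional exists because every vertex of a polytope over a finite point set is exposed, the substitution $x_i=\eps_i t^{w_i}$ makes the vertex term dominate, and the sign-vector bookkeeping correctly yields both evenness of $\boldsymbol{\alpha}$ and positivity of $f_{\boldsymbol{\alpha}}$. The paper itself does not prove this proposition but cites Reznick for it, and your argument is essentially the standard one from that literature (vertex terms dominate along monomial curves), so there is nothing to add.
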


The statement remains true for real Laurent polynomials $g \in \R[\mathbf{x}^{\pm 1}] = \R[x_1^{\pm 1},\ldots,x_n^{\pm 1}]$, since we can consider $g$ as a polynomial $f$ divided by a monomial square $\mathbf{x}^{\boldsymbol{\alp}}$ for an even $\boldsymbol{\alp}$; this is of relevance in Section \ref{Sec:NonSTPolynomials}. For the remainder of the article, we assume that these necessary conditions in Proposition \ref{Prop:NecessaryConditions} are satisfied including $\New(f) = \conv(A)$. For simplicity, we denote this assumption by the symbol \assum{}  from now on.\\

In what follows we consider the class of \textit{ST-polynomials}. For further details about the following objects defined in this section see \cite{deWolff:Circuits:OWR,Iliman:deWolff:Circuits,Iliman:deWolff:GP}; see also \cite{Fidalgo:Kovacec,Ghasemi:Marshall:GP,Ghasemi:Marshall:GP:Semialgebraic}.
\begin{definition}
Let $f \in \R[\mathbf{x}]$ be supported on $A \subset \N^n$ such that \assum{} holds. Then $f$ is called an \struc{\emph{ST-polynomial}} if it is of the form
\begin{eqnarray}
 \struc{f(\mathbf{x})} & = & \sum_{j=0}^r f_{\boldsymbol{\alp}(j)} \mathbf{x}^{\boldsymbol{\alp}(j)} +\sum_{\boldsymbol{\beta} \in \Delta(A)} f_{\boldsymbol{\beta}} \mathbf{x}^{\boldsymbol{\beta}}, \label{Equ:STPolynomial}
\end{eqnarray}
with $\struc{r} \leq n$, exponents $\struc{\boldsymbol{\alp}(j)}$ and $\struc{\boldsymbol{\beta}}$, and coefficients $\struc{f_{\boldsymbol{\alp}(j)}}, \struc{f_{\boldsymbol{\beta}}}$, for which the following conditions hold:

\begin{description}
 \item[(ST1)] The points $\boldsymbol{\alp}(0), \boldsymbol{\alp}(1),\ldots,\boldsymbol{\alp}(r)$ are affinely independent and equal $V(A)$.
 \item[(ST2)] Every exponent $\boldsymbol{\beta} \in \Delta(A)$ can be written uniquely as 
 \begin{eqnarray*}
  & & \boldsymbol{\beta} \ = \ \sum_{j=0}^r \lambda_j^{(\boldsymbol{\beta})} \boldsymbol{\alp}(j) \ \text{ with } \ \lambda_j^{(\boldsymbol{\beta})} \ \geq \ 0 \ \text{ and } \  \sum_{j=0}^r \lambda_j^{(\boldsymbol{\beta})} \ = \ 1.
 \end{eqnarray*}
\end{description}
\label{Def:STPolynomial}
\endexa
\end{definition}

Note that hypotheses (ST1) and (ST2) imply that  $V(A) =\{\boldsymbol{\alp}(0),\ldots,\boldsymbol{\alp}(r)\}$ is the vertex set of an $r$-dimensional simplex. By the assumption \assum{} it consists of even lattice points, and it coincides with $\New(f)=\conv(A)$.  
The $\struc{\lambda_j^{(\boldsymbol{\beta})}}$ denote the barycentric coordinates of $\boldsymbol{\beta}$ relative to the vertices $\boldsymbol{\alp}(j)$ with $j=0,\ldots,r$. The ``ST'' in ``ST-polynomial'' is short for ``simplex tail''. The tail part is
given by the sum $\sum_{\boldsymbol{\beta}\in \Delta(A)} f_{\boldsymbol{\beta}} \mathbf{x}^{\boldsymbol{\beta}},$ while the other terms define the simplex part. If an ST-polynomial $f$ has a tail part consisting of at most one term, then we call $f$ a \struc{\emph{circuit polynomial}}.

\medskip

Nonnegativity of ST-polynomials is closely related to an invariant called the \textit{circuit number}.

\begin{definition}
Let $f$ be an ST-polynomial with support set $A$. For every $\boldsymbol{\beta} \in \Delta(A)$ we define the corresponding \struc{\textit{circuit number}} as
\begin{eqnarray}
 \struc{\Theta_f(\boldsymbol{\beta})} \ = \ \prod_{j\in {\rm nz}(\boldsymbol{\beta})} \left(\frac{f_{\boldsymbol{\alp}(j)}}{\lambda_j^{(\boldsymbol{\beta})}}\right)^{\lambda_j^{(\boldsymbol{\beta})}} \label{Equ:DefCircuitNumber}
\end{eqnarray}
with $\struc{{\rm nz}(\boldsymbol{\beta})}=\{j\in \{0,\ldots,r\}:\lambda_j^{(\boldsymbol{\beta})}\neq 0\}$, $f_{\boldsymbol{\alp}(j)}$, and $\lambda_j^{(\boldsymbol{\beta})}$ as before.
\endexa
\end{definition}

The terms ``circuit polynomial'' and ``circuit number'' are chosen since $\boldsymbol{\beta}$ and the $\boldsymbol{\alp}(j)$ with $j \in {\rm nz}(\boldsymbol{\beta})$ form a \struc{\textit{circuit}}; this is a minimally affine dependent set, see e.g. \cite{Oxley:MatroidTheory}.

A fundamental fact is that nonnegativity of a circuit polynomial $f$ can be decided by comparing its tail coefficient
$f_{\boldsymbol{\beta}}$ with its corresponding circuit number $\Theta_f(\boldsymbol{\beta})$ alone.

\begin{thm}[\cite{Iliman:deWolff:Circuits}, Theorem 3.8]
\label{Thm:Positiv}
 Let $f$  be a  circuit polynomial with unique tail term $f_{\boldsymbol{\beta}} \mathbf{x}^{\boldsymbol{\beta}}$ and let $\Theta_f(\boldsymbol{\beta})$ be the corresponding circuit number, as defined in \eqref{Equ:DefCircuitNumber}. 
Then the following statements are equivalent:
\begin{enumerate}
 \item $f$ is nonnegative.
 \item $|f_{\boldsymbol{\beta}}| \leq \Theta_f(\boldsymbol{\beta})$ and $\boldsymbol{\beta} \not \in (2\N)^n$ \quad or \quad $f_{\boldsymbol{\beta}}\geq -\Theta_f(\boldsymbol{\beta})$ and $\boldsymbol{\beta} \in (2\N)^n$.
\end{enumerate}
\end{thm}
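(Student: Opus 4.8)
The plan is to prove both implications by exploiting the weighted arithmetic–geometric mean (AM–GM) inequality, which is exactly what the circuit number $\Theta_f(\boldsymbol{\beta})$ encodes. Throughout I would write $\boldsymbol{\beta} = \sum_{j \in {\rm nz}(\boldsymbol{\beta})} \lambda_j^{(\boldsymbol{\beta})} \boldsymbol{\alp}(j)$ as in (ST2), and use that by \assum{} every $\boldsymbol{\alp}(j) \in (2\N)^n$ with $f_{\boldsymbol{\alp}(j)} > 0$, so that $\mathbf{x}^{\boldsymbol{\alp}(j)} \geq 0$ for all $\mathbf{x} \in \R^n$ and $\mathbf{x}^{\boldsymbol{\alp}(j)} = |\mathbf{x}|^{\boldsymbol{\alp}(j)}$.

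For the direction (2) $\Rightarrow$ (1): fix $\mathbf{x}$ with all coordinates nonzero and apply weighted AM–GM to the nonnegative numbers $y_j := (f_{\boldsymbol{\alp}(j)}/\lambda_j^{(\boldsymbol{\beta})})\,\mathbf{x}^{\boldsymbol{\alp}(j)}$, $j \in {\rm nz}(\boldsymbol{\beta})$, with weights $\lambda_j^{(\boldsymbol{\beta})}$. Since the weights sum to $1$ and $\prod_{j} (\mathbf{x}^{\boldsymbol{\alp}(j)})^{\lambda_j^{(\boldsymbol{\beta})}} = \prod_i |x_i|^{\sum_j \lambda_j^{(\boldsymbol{\beta})}\alp_i(j)} = |\mathbf{x}^{\boldsymbol{\beta}}|$, one gets $\sum_{j \in {\rm nz}(\boldsymbol{\beta})} f_{\boldsymbol{\alp}(j)} \mathbf{x}^{\boldsymbol{\alp}(j)} \geq \Theta_f(\boldsymbol{\beta})\,|\mathbf{x}^{\boldsymbol{\beta}}|$. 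Dropping the remaining monomial-square terms (which are $\geq 0$) yields $f(\mathbf{x}) \geq \Theta_f(\boldsymbol{\beta})|\mathbf{x}^{\boldsymbol{\beta}}| + f_{\boldsymbol{\beta}}\mathbf{x}^{\boldsymbol{\beta}}$. If $\boldsymbol{\beta} \in (2\N)^n$ then $\mathbf{x}^{\boldsymbol{\beta}} \geq 0$ and the right side equals $(\Theta_f(\boldsymbol{\beta}) + f_{\boldsymbol{\beta}})\mathbf{x}^{\boldsymbol{\beta}} \geq 0$ by $f_{\boldsymbol{\beta}} \geq -\Theta_f(\boldsymbol{\beta})$; if $\boldsymbol{\beta} \notin (2\N)^n$ the right side is $\geq (\Theta_f(\boldsymbol{\beta}) - |f_{\boldsymbol{\beta}}|)|\mathbf{x}^{\boldsymbol{\beta}}| \geq 0$ by $|f_{\boldsymbol{\beta}}| \leq \Theta_f(\boldsymbol{\beta})$. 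Nonnegativity on all of $\R^n$ then follows by continuity, since $\{\mathbf{x} : \prod_i x_i \neq 0\}$ is dense.

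For the direction (1) $\Rightarrow$ (2) I would argue by contraposition: assuming the coefficient condition in (2) fails, I construct $\mathbf{x}^* \in \R^n$ with $f(\mathbf{x}^*) < 0$. Equality in the AM–GM step above holds precisely when all $y_j$ coincide, which can be arranged by choosing positive coordinates so that $(f_{\boldsymbol{\alp}(j)}/\lambda_j^{(\boldsymbol{\beta})})\,\mathbf{x}^{\boldsymbol{\alp}(j)}$ is independent of $j \in {\rm nz}(\boldsymbol{\beta})$; simultaneously one drives the monomial-square terms indexed by $j \notin {\rm nz}(\boldsymbol{\beta})$ to be arbitrarily small, using that affine independence of $\boldsymbol{\alp}(0),\dots,\boldsymbol{\alp}(r)$ from (ST1) leaves a coordinate direction free that is not pinned down by the circuit. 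After rescaling, and — when $\boldsymbol{\beta}\notin(2\N)^n$ — flipping the sign of one coordinate so that $\mathbf{x}^{*\boldsymbol{\beta}} < 0$, the value of $f$ at $\mathbf{x}^*$ reduces to $\Theta_f(\boldsymbol{\beta})|\mathbf{x}^{*\boldsymbol{\beta}}| + f_{\boldsymbol{\beta}}\mathbf{x}^{*\boldsymbol{\beta}}$ plus an arbitrarily small error term, which is strictly negative once $f_{\boldsymbol{\beta}} < -\Theta_f(\boldsymbol{\beta})$ in the even case or $|f_{\boldsymbol{\beta}}| > \Theta_f(\boldsymbol{\beta})$ in the non-even case.

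The main obstacle is this second implication: making the witness point fully explicit, i.e. simultaneously realizing the equality configuration among the circuit terms, annihilating the monomial-square terms outside ${\rm nz}(\boldsymbol{\beta})$ in the limit, and choosing the signs of the coordinates of $\mathbf{x}^*$ so that $f_{\boldsymbol{\beta}}\mathbf{x}^{*\boldsymbol{\beta}}$ attains its most negative sign. The first implication, by contrast, is essentially a one-line consequence of weighted AM–GM together with nonnegativity of monomial squares.
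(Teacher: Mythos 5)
Your proposal is sound and follows essentially the same route as the source of this statement: the paper does not prove Theorem \ref{Thm:Positiv} itself but imports it from \cite{Iliman:deWolff:Circuits}, whose argument is precisely your weighted AM--GM bound for sufficiency and, for necessity, a witness point built from the AM--GM equality configuration (achievable because the $\boldsymbol{\alp}(j)$, $j\in{\rm nz}(\boldsymbol{\beta})$, are affinely independent), a sign flip along an odd entry of $\boldsymbol{\beta}$, and a limit killing the remaining vertex terms. The one imprecise phrase is ``a coordinate direction free'': what one actually uses is that $\conv\{\boldsymbol{\alp}(j) : j\in{\rm nz}(\boldsymbol{\beta})\}$ is an exposed face of the Newton simplex, so in logarithmic coordinates there is a linear functional constant on these exponents (hence also on $\boldsymbol{\beta}$) and strictly smaller on the other vertices; moving along it rescales all circuit terms by a common positive factor while the outside monomial squares decay, which is generally not a literal coordinate direction but does make your limiting argument go through.
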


Note that (2) can be equivalently stated as: $|f_{\boldsymbol{\beta}}| \leq \Theta_f(\boldsymbol{\beta})$ or $f$ is a sum of monomial squares. 

Writing a polynomial as a sum of nonnegative circuit polynomials is a certificate of nonnegativity. We denote
by  \struc{SONC} the class of polynomials that are \struc{\it sums of nonnegative circuit polynomials} or the property of a polynomial to be in this class. 

\subsection{Geometric Programming}
\label{SubSec:PrelimGP}

Geometric programming was introduced in \cite{Duffin:Peterson:Zener:Book}. It is a convex optimization problem and has applications for example in nonlinear network flow problems, optimal control, optimal location problems, chemical equilibrium problems and particularly in circuit design problems.

\begin{definition}
A function $p : \mathbb R_{>0}^n\to \mathbb R$ of the form $\struc{p(\mathbf{z})} = p(z_1,\ldots,z_n) = cz_1^{\alp_1}\cdots z_n^{\alp_n}$ with $c > 0$ and $\alp_i \in \mathbb R$ is called a \struc{\emph{monomial (function)}}. A sum $\struc{\sum_{i=0}^k c_iz_1^{\alp_{1}(i)}\cdots z_n^{\alp_{n}(i)}}$ of monomials with $c_i > 0$ is called a \struc{\emph{posynomial (function)}}.

A \struc{\emph{geometric program} (GP)} has the following form:
\begin{eqnarray}
\label{Equ:GP}
\begin{cases} 
\text{minimize} & p_0(\mathbf{z}), \\ 
\text{subject to:} &  
\begin{array}{cl}
 (1) & p_i(\mathbf{z}) \leq 1 \ \text{ for all } \ 1\leq i\leq m, \\
 (2) & q_j(\mathbf{z}) = 1 \ \text{ for all } \ 1\leq j\leq l, \\
\end{array}
\end{cases}
\end{eqnarray}
where $p_0,\dots,p_m$ are posynomials and $q_1,\dots,q_l$ are monomial functions.
\label{Def:GP}
\endexa
\end{definition}

Geometric programs can be solved with interior point methods. In \cite{nesterov}, the authors prove  worst-case polynomial time complexity of this method; see also \cite[Page 118]{Boyd:Kim:Vandenberghe:TutorialOnGP}. A \struc{\textit{signomial program}} is given like a geometric program except that the coefficients $c_i$ of the involved posynomials can be arbitrary real numbers.

For an introduction to geometric programming, signomial programming, and an overview about applications see \cite{Boyd:Kim:Vandenberghe:TutorialOnGP, Boyd:CO}.

\subsection{SONC Certificates via Geometric Programming in the Unconstrained Case}
\label{SubSec:PrelimGPCertificates}

In this section we recall the main results from \cite{Iliman:deWolff:GP} about SONC certificates obtained via geometric programming for unconstrained polynomial optimization problems. These results always require that the polynomial in the optimization problem is an ST-polynomial in the sense of Section \ref{SubSec:PrelimSONC}.

\begin{thm}(\cite[Theorems 3.4 and 3.5]{Iliman:deWolff:GP})
\label{Thm:GPUnconstrainedCase}
 Assume that $f$ is an ST-polynomial as in \eqref{Equ:STPolynomial} with $\boldsymbol{\alp}(0)=0$ and let $k\in \R.$   Suppose
 that for every $(\boldsymbol{\beta},j) \in \Delta(f) \times \{1,\ldots,r\}$ 
there exists an $a_{\boldsymbol{\beta},j} \geq 0,$ such that:
\begin{enumerate}
 \item $a_{\boldsymbol{\beta},j} > 0$ if and only if $\lambda_j^{(\boldsymbol{\beta})}>0$,
 \item $|f_{\boldsymbol{\beta}}| \leq \prod\limits_{j\in {\rm nz}(\boldsymbol{\beta})} \left(\frac{a_{\boldsymbol{\beta},j}}{\lambda_j^{(\boldsymbol{\beta})}}\right)^{\lambda_j^{(\boldsymbol{\beta})}}$ 
     \, for every $\boldsymbol{\beta} \in \Delta(f)$ with $\lambda_{0}^{(\boldsymbol{\beta})}=0$,
 \item $f_{\boldsymbol{\alp}(j)} \geq \sum\limits_{\boldsymbol{\beta}\in \Delta(f)} a_{\boldsymbol{\beta},j}$ for all $1 \leq j \leq r,$ 
 \item $f_{\boldsymbol{\alp}(0)}-k  \geq \sum\limits_{\substack{\boldsymbol{\beta} \in \Delta(f) \\ \lambda_0^{(\boldsymbol{\beta})} \neq 0}}  \lambda_0^{(\boldsymbol{\beta})} |f_{\boldsymbol{\beta}}|^{1/\lambda_0^{(\boldsymbol{\beta})}} 
     \prod\limits_{\substack{j\in {\rm nz}(\boldsymbol{\beta}) \\ j\geq 1}}  
         \left(\frac{\lambda_j^{(\boldsymbol{\beta})}}{a_{\boldsymbol{\beta},j}}\right)^{\lambda_j^{(\boldsymbol{\beta})}/\lambda_0^{(\boldsymbol{\beta})}}.$
\end{enumerate}
Then $f-k$ is a sum of nonnegative circuit polynomials $g_1,\ldots,g_s$ such that $s := |\Delta(f)|$, and for every $g_i$ the Newton polytope $\New(g_i)$ is a face of $\New(f)$.

 Let $\struc{f_{\rm sonc}}$ be the supremum of all $k \in \R$ such that for every $\boldsymbol{\beta} \in \Delta(f)$ there exist nonnegative reals 
        $a_{\boldsymbol{\beta},1},\ldots, a_{\boldsymbol{\beta},r}$ such that the conditions $(1)$ to $(4)$ are satisfied. Then $f_{\rm sonc}$ coincides with the supremum of all $k\in \R$ such that there exist nonnegative circuit polynomials $g_1, g_2, \ldots, g_s$
whose Newton polytopes are faces of $\New(f)$ and which satisfy $f-k =\sum_{i=1}^s g_i.$ 
\label{Thm:STPolynomialNN}
 \end{thm}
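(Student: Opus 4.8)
The plan is to read conditions (1)--(4) as exactly the bookkeeping needed to split $f-k\mathbf{x}^{\boldsymbol{\alp}(0)}$ into circuit polynomials indexed by $\Delta(f)$. For each $\boldsymbol{\beta}\in\Delta(f)$ I would build a polynomial $g_{\boldsymbol{\beta}}$ with the single tail term $f_{\boldsymbol{\beta}}\mathbf{x}^{\boldsymbol{\beta}}$, with coefficient $a_{\boldsymbol{\beta},j}$ at each vertex $\boldsymbol{\alp}(j)$, $j\in{\rm nz}(\boldsymbol{\beta})$, $j\geq 1$, and, whenever $0\in{\rm nz}(\boldsymbol{\beta})$, with coefficient
\[
b_{\boldsymbol{\beta},0}\ :=\ \lambda_0^{(\boldsymbol{\beta})}\,|f_{\boldsymbol{\beta}}|^{1/\lambda_0^{(\boldsymbol{\beta})}}\prod_{\substack{j\in{\rm nz}(\boldsymbol{\beta})\\ j\geq 1}}\left(\frac{\lambda_j^{(\boldsymbol{\beta})}}{a_{\boldsymbol{\beta},j}}\right)^{\lambda_j^{(\boldsymbol{\beta})}/\lambda_0^{(\boldsymbol{\beta})}}
\]
at $\boldsymbol{\alp}(0)$; this is precisely the value for which the circuit number $\Theta_{g_{\boldsymbol{\beta}}}(\boldsymbol{\beta})$ equals $|f_{\boldsymbol{\beta}}|$. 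Condition (1) makes all these coefficients strictly positive, so $\New(g_{\boldsymbol{\beta}})=\conv\{\boldsymbol{\alp}(j):j\in{\rm nz}(\boldsymbol{\beta})\}$, and since $\New(f)$ is, by (ST1) and (ST2), the $r$-simplex on $\{\boldsymbol{\alp}(0),\ldots,\boldsymbol{\alp}(r)\}$, this hull is automatically a face of $\New(f)$.

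Next I would check nonnegativity of every $g_{\boldsymbol{\beta}}$ with Theorem \ref{Thm:Positiv}: if $\lambda_0^{(\boldsymbol{\beta})}=0$ then $g_{\boldsymbol{\beta}}$ contains no $\mathbf{x}^{\boldsymbol{\alp}(0)}$ and condition (2) says exactly $|f_{\boldsymbol{\beta}}|\leq\Theta_{g_{\boldsymbol{\beta}}}(\boldsymbol{\beta})$; if $\lambda_0^{(\boldsymbol{\beta})}\neq 0$ the choice of $b_{\boldsymbol{\beta},0}$ forces equality there. In either case $g_{\boldsymbol{\beta}}$ is a nonnegative circuit polynomial. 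Adding the $g_{\boldsymbol{\beta}}$ over $\boldsymbol{\beta}\in\Delta(f)$ and comparing monomials: the $\mathbf{x}^{\boldsymbol{\beta}}$-coefficient is $f_{\boldsymbol{\beta}}$, the $\mathbf{x}^{\boldsymbol{\alp}(j)}$-coefficient for $1\leq j\leq r$ is $\sum_{\boldsymbol{\beta}}a_{\boldsymbol{\beta},j}\leq f_{\boldsymbol{\alp}(j)}$ by (3), and the $\mathbf{x}^{\boldsymbol{\alp}(0)}$-coefficient is $\sum_{\boldsymbol{\beta}:\,\lambda_0^{(\boldsymbol{\beta})}\neq 0}b_{\boldsymbol{\beta},0}\leq f_{\boldsymbol{\alp}(0)}-k$ by (4). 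The remaining nonnegative discrepancies live on even lattice points (the vertices, by \assum) or on the monomial-square tail exponents of $f$, hence are sums of monomial squares; as enlarging a vertex coefficient only increases a circuit number, these can be folded back into the $g_{\boldsymbol{\beta}}$ (or retained as trivial-tail circuit polynomials) as in \cite{Iliman:deWolff:GP}, yielding the stated decomposition of $f-k\mathbf{x}^{\boldsymbol{\alp}(0)}$ into $|\Delta(f)|$ nonnegative circuit polynomials with Newton polytopes faces of $\New(f)$.

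This shows every $k$ meeting (1)--(4) is admissible for the right-hand supremum, so $f_{\rm sonc}$ is at most that supremum. For the reverse inequality I would take a decomposition $f-k\mathbf{x}^{\boldsymbol{\alp}(0)}=\sum_{i=1}^s g_i$ into nonnegative circuit polynomials with Newton polytopes faces of $\New(f)$ and normalise it. Every face of the simplex $\New(f)$ is a simplex on a subset of $\{\boldsymbol{\alp}(0),\ldots,\boldsymbol{\alp}(r)\}$, so each $g_i$ is either a monomial square or a circuit polynomial whose vertex support equals $\{\boldsymbol{\alp}(j):j\in{\rm nz}(\boldsymbol{\beta}_i)\}$ for its tail exponent $\boldsymbol{\beta}_i$ (by minimality of circuit supports and uniqueness in (ST2)). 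Grouping the $g_i$ by tail exponent and adding, the superadditivity $\Theta_{g+h}(\boldsymbol{\beta})\geq\Theta_{g}(\boldsymbol{\beta})+\Theta_{h}(\boldsymbol{\beta})$ of the circuit number --- a consequence of concavity and positive homogeneity of weighted geometric means --- keeps each group a nonnegative circuit polynomial with the same tail, and groups whose tail lies in $\Delta(A)\setminus\Delta(f)$ collapse to sums of monomial squares. Setting $a_{\boldsymbol{\beta},j}$ ($j\geq 1$) equal to the $\mathbf{x}^{\boldsymbol{\alp}(j)}$-coefficient of the surviving group with tail $\boldsymbol{\beta}\in\Delta(f)$: condition (1) is the minimality of circuit supports, condition (2) is Theorem \ref{Thm:Positiv} for the groups with $\lambda_0^{(\boldsymbol{\beta})}=0$, condition (3) and the $\mathbf{x}^{\boldsymbol{\alp}(0)}$-budget underlying (4) come from comparing coefficients of $f-k\mathbf{x}^{\boldsymbol{\alp}(0)}$, and (4) itself follows by isolating $b_{\boldsymbol{\beta},0}$ from Theorem \ref{Thm:Positiv} applied to each group with $\lambda_0^{(\boldsymbol{\beta})}\neq 0$ and summing. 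Hence this $k$ satisfies (1)--(4), which gives the reverse inequality and thus equality of the two suprema.

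The real difficulty is this reverse direction, i.e. turning an \emph{arbitrary} circuit decomposition with the face property into the canonical one-circuit-per-$\boldsymbol{\beta}$ form. The crucial ingredient is the superadditivity of the circuit number when merging circuit polynomials with a common tail; intertwined with it is the somewhat fussy bookkeeping that removes spurious tails, collapses monomial-square groups, and keeps track of the exact number of summands. The forward direction, by contrast, is essentially mechanical once one notices that the quantity appearing in condition (4) is nothing but the coefficient $b_{\boldsymbol{\beta},0}$ realising equality in Theorem \ref{Thm:Positiv} for $g_{\boldsymbol{\beta}}$.
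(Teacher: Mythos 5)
Note that this paper never proves Theorem \ref{Thm:GPUnconstrainedCase} itself: it is recalled verbatim from \cite[Theorems 3.4 and 3.5]{Iliman:deWolff:GP}, so there is no in-paper proof to compare against. Your proposal is correct and follows essentially the argument of that source: in the forward direction one nonnegative circuit polynomial per $\boldsymbol{\beta}\in\Delta(f)$ whose $\mathbf{x}^{\boldsymbol{\alp}(0)}$-coefficient is chosen so that its circuit number equals $|f_{\boldsymbol{\beta}}|$ (making condition (4) precisely the budget at $\boldsymbol{\alp}(0)$ and the leftovers monomial squares), and in the reverse direction grouping a face-supported circuit decomposition by tail exponent, invoking Theorem \ref{Thm:Positiv} together with superadditivity (concavity plus positive homogeneity) of the weighted geometric mean, and reading off the $a_{\boldsymbol{\beta},j}$ from the grouped vertex coefficients restricted to $j\in{\rm nz}(\boldsymbol{\beta})$ so that condition (1) holds.
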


We remark that Theorem \ref{Thm:STPolynomialNN} remains valid if all terms of an ST-polynomial are multiplied by a monomial square $\mathbf{x}^{\boldsymbol{\alp}(0)} \in (2\N)^n$. In this case, condition needs to be replaced by
\begin{enumerate}
	\item[$(4')$] $(f_{\boldsymbol{\alp}(0)}-k)\mathbf{x}^{\boldsymbol{\alp}(0)}  \geq \sum\limits_{\substack{\boldsymbol{\beta} \in \Delta(f) \\ \lambda_0^{(\boldsymbol{\beta})} \neq 0}}  \lambda_0^{(\boldsymbol{\beta})} |f_{\boldsymbol{\beta}}|^{1/\lambda_0^{(\boldsymbol{\beta})}} 
	\prod\limits_{\substack{j\in {\rm nz}(\boldsymbol{\beta}) \\ j\geq 1}}  
	\left(\frac{\lambda_j^{(\boldsymbol{\beta})}}{a_{\boldsymbol{\beta},j}}\right)^{\lambda_j^{(\boldsymbol{\beta})}/\lambda_0^{(\boldsymbol{\beta})}}.$
\end{enumerate} 
We then obtain a nonnegativity bound for the (positive) the coefficient of the term $\mathbf{x}^{\boldsymbol{\alp}(0)}$ instead of the constant term. We apply this slightly more general version of Theorem \ref{Thm:STPolynomialNN} in Section \ref{Sec:NonSTPolynomials}.
 
For the special case of scaled standard simplices Theorem \ref{Thm:GPUnconstrainedCase} was shown earlier by Ghasemi and Marshall \cite[Theorem 3.1]{Ghasemi:Marshall:GP}. In this special case every sum of nonnegative circuit polynomials is also a sum of binomial squares which is not true in general. For example, the Motzkin polynomial is an ST-polynomial with one interior term, which is not even a SOS.

\smallskip

Theorem \ref{Thm:STPolynomialNN} states
\begin{eqnarray*}
 f_{\rm sonc} & = & \sup\{k \in \R \ : \ f - k \mathbf{x}^{\boldsymbol{\alp}(0)} \text{ is a SONC} \,\}.
\end{eqnarray*}
 
The bound $f_{\rm sonc}$ is given by a geometric program \cite[Corollary 4.2]{Iliman:deWolff:GP}:

\begin{cor}\label{Cor:GP}
Let $f\in \R[\mathbf{x}]$ be an ST-polynomial. Let $R$ be the subset of an $r |\Delta(f)|$-dimensional real space given by
\begin{eqnarray*}
 \struc{R} & = & \{(a_{\boldsymbol{\beta},j}) \ : \ a_{\boldsymbol{\beta},j} \in \R_{> 0} \text{ for every } \boldsymbol{\beta}\in \Delta(f) \text{ and } j \in {\rm nz}(\boldsymbol{\beta})\}.
\end{eqnarray*}
Then $f_{\rm sonc}=f_{\boldsymbol{\alp}(0)}-m^*,$ where $\struc{m^*}$ is given as the output of the following geometric program:
\begin{eqnarray*}
\begin{cases}
\emph{minimize} & \sum\limits_{\substack{\boldsymbol{\beta} \in \Delta(f) \\ \lambda_0^{(\boldsymbol{\beta})} \neq 0}}\lambda_0^{(\boldsymbol{\beta})} |f_{\boldsymbol{\beta}}|^{1/\lambda_0^{(\boldsymbol{\beta})}} 
    \prod\limits_{\substack{ j\in {\rm nz}(\boldsymbol{\beta})\\ j\geq 1}}
  \left(\frac{\lambda_j^{(\boldsymbol{\beta})}}{a_{\boldsymbol{\beta},j}}\right)^{\lambda_j^{(\boldsymbol{\beta})}/\lambda_0^{(\boldsymbol{\beta})}}
\emph{ over the subset } R' \emph{ of } R \\
& \\
\emph{defined by:} &
\begin{array}{cl}
 (1) & \sum\limits_{\boldsymbol{\beta}\in \Delta(f)} (a_{\boldsymbol{\beta},j}/f_{\boldsymbol{\alp}(j)} ) \leq 1 \emph{ for every } 1 \leq j \leq r, \\
 (2) & |f_{\boldsymbol{\beta}}| \prod\limits_{j\in {\rm nz(\boldsymbol{\beta})}} 
                    \left(\frac{\lambda_j^{(\boldsymbol{\beta})}}{a_{\boldsymbol{\beta},j}}\right)^{ \lambda_j^{(\boldsymbol{\beta})}} \leq 1 \emph{ for every } \boldsymbol{\beta}\in \Delta(f) \emph{ with } \lambda_0^{(\boldsymbol{\beta})}=0. \\
\end{array}
\end{cases}
\end{eqnarray*}
\end{cor}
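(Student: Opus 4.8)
The plan is to derive Corollary~\ref{Cor:GP} directly from Theorem~\ref{Thm:STPolynomialNN} by recognizing that the characterization of $f_{\rm sonc}$ given there is, after an elementary change of variables and normalization, exactly a geometric program in the sense of Definition~\ref{Def:GP}. First I would recall that Theorem~\ref{Thm:STPolynomialNN} expresses $f_{\rm sonc}$ as the supremum of all $k\in\R$ for which there exist nonnegative reals $a_{\boldsymbol{\beta},j}$ satisfying conditions (1)--(4). Writing $k = f_{\boldsymbol{\alp}(0)} - m$, maximizing $k$ is the same as minimizing $m$ subject to the constraint that $m\,\mathbf{x}^{\boldsymbol{\alp}(0)}$ dominates the right-hand side of condition (4); since $\mathbf{x}^{\boldsymbol{\alp}(0)}$ is a monomial square and the inequality in (4) is meant coefficient-wise, this says precisely
\begin{eqnarray*}
 m \ \geq \ \sum_{\substack{\boldsymbol{\beta}\in\Delta(f)\\ \lambda_0^{(\boldsymbol{\beta})}\neq 0}} \lambda_0^{(\boldsymbol{\beta})}\, |f_{\boldsymbol{\beta}}|^{1/\lambda_0^{(\boldsymbol{\beta})}} \prod_{\substack{j\in{\rm nz}(\boldsymbol{\beta})\\ j\geq 1}} \left(\frac{\lambda_j^{(\boldsymbol{\beta})}}{a_{\boldsymbol{\beta},j}}\right)^{\lambda_j^{(\boldsymbol{\beta})}/\lambda_0^{(\boldsymbol{\beta})}},
\end{eqnarray*}
so the optimal $m^*$ is just the infimum of this posynomial expression in the variables $(a_{\boldsymbol{\beta},j})$ over the feasible region, and $f_{\rm sonc}=f_{\boldsymbol{\alp}(0)}-m^*$ as claimed. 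This identifies the objective function of the geometric program.

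Next I would translate the remaining conditions into the constraint form $(1)$ and $(2)$ of the corollary. Condition (3) of Theorem~\ref{Thm:STPolynomialNN}, namely $f_{\boldsymbol{\alp}(j)} \geq \sum_{\boldsymbol{\beta}\in\Delta(f)} a_{\boldsymbol{\beta},j}$ for $1\leq j\leq r$, becomes $\sum_{\boldsymbol{\beta}\in\Delta(f)} (a_{\boldsymbol{\beta},j}/f_{\boldsymbol{\alp}(j)}) \leq 1$ after dividing by $f_{\boldsymbol{\alp}(j)}$, which is positive by assumption \assum{}; this is exactly constraint~$(1)$, and the left-hand side is a posynomial in the $a_{\boldsymbol{\beta},j}$ since the $f_{\boldsymbol{\alp}(j)}$ are positive constants. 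Condition (2) of the theorem, $|f_{\boldsymbol{\beta}}| \leq \prod_{j\in{\rm nz}(\boldsymbol{\beta})} (a_{\boldsymbol{\beta},j}/\lambda_j^{(\boldsymbol{\beta})})^{\lambda_j^{(\boldsymbol{\beta})}}$ for those $\boldsymbol{\beta}$ with $\lambda_0^{(\boldsymbol{\beta})}=0$, rearranges (moving the product to the other side and using $\sum_j \lambda_j^{(\boldsymbol{\beta})} = 1$) to $|f_{\boldsymbol{\beta}}| \prod_{j\in{\rm nz}(\boldsymbol{\beta})} (\lambda_j^{(\boldsymbol{\beta})}/a_{\boldsymbol{\beta},j})^{\lambda_j^{(\boldsymbol{\beta})}} \leq 1$, which is constraint~$(2)$; again the left-hand side is a posynomial (in fact a single monomial function) in the $a_{\boldsymbol{\beta},j}$ with positive coefficient $|f_{\boldsymbol{\beta}}| \prod_j (\lambda_j^{(\boldsymbol{\beta})})^{\lambda_j^{(\boldsymbol{\beta})}}$. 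Finally, condition (1) of the theorem---that $a_{\boldsymbol{\beta},j}>0$ exactly when $\lambda_j^{(\boldsymbol{\beta})}>0$---is precisely the specification that the optimization variables are the $a_{\boldsymbol{\beta},j}$ with $j\in{\rm nz}(\boldsymbol{\beta})$ ranging over $\R_{>0}$, i.e. the domain $R$ (and $R'$ is the feasible subset cut out by $(1)$ and $(2)$).

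Assembling these observations, the program minimizing the posynomial objective over $R'$ is a bona fide geometric program: the objective and the constraint functions in $(1)$ are posynomials in $(a_{\boldsymbol{\beta},j})$, the constraint functions in $(2)$ are monomials, and all are required to be $\leq 1$ after the normalization. Hence its optimal value equals $m^*$, and by the first paragraph $f_{\rm sonc} = f_{\boldsymbol{\alp}(0)} - m^*$. The only points requiring care---and the mild obstacle---are bookkeeping ones: checking that the objective genuinely depends only on the variables $a_{\boldsymbol{\beta},j}$ with $j\geq 1$ and $j\in{\rm nz}(\boldsymbol{\beta})$ (the $\boldsymbol{\beta}$ with $\lambda_0^{(\boldsymbol{\beta})}=0$ contribute nothing to the objective but do contribute constraints of type $(2)$, while those with $\lambda_0^{(\boldsymbol{\beta})}\neq 0$ contribute to the objective but not to type $(2)$), and verifying that dividing the inequalities through by the relevant positive constants preserves both the feasible set and the supremum. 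None of this is deep, but it is the step where one must be attentive to which indices appear where, so that the resulting program is literally of the form in Definition~\ref{Def:GP}.
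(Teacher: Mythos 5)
Your derivation is correct and is essentially the intended one: the paper imports this statement from \cite[Corollary 4.2]{Iliman:deWolff:GP}, where it is obtained exactly as you do, by rewriting conditions (1)--(4) of Theorem \ref{Thm:STPolynomialNN} with $k=f_{\boldsymbol{\alp}(0)}-m$, dividing constraint (3) by the positive vertex coefficients and rearranging constraint (2), so that minimizing the resulting posynomial objective over $R'$ is a geometric program whose optimal value $m^*$ satisfies $f_{\rm sonc}=f_{\boldsymbol{\alp}(0)}-m^*$. Your bookkeeping remarks (which indices enter the objective versus the constraints, and that strict positivity of the $a_{\boldsymbol{\beta},j}$ for $j\in{\rm nz}(\boldsymbol{\beta})$ matches condition (1) of the theorem) are precisely the points that need checking, and you handle them correctly.
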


Hence, the optimal bound to find a SONC decomposition of an ST-polynomial is provided by geometric programming. Since a polynomial with a SONC decomposition is nonnegative, geometric programming can be used to find certificates of nonnegativity.\\

Following the literature, e.g. \cite{Blekherman:Parrilo:Thomas,Laurent:Survey}, we define a \struc{\textit{global polynomial optimization problem}} for some $f \in \R[\mathbf{x}]$ as the problem to determine the real number
\begin{eqnarray*}
\struc{f^*} & = &\inf\{f(\mathbf{x}) \ : \ \mathbf{x}\in\R^n\} \ = \ \sup\{\lambda\in\R \ : \ f - \lambda \geq 0\}.
\end{eqnarray*}
 One can find a lower bound for $f^*$ by relaxing the nonnegativity condition in the above problem to finding the real number
\begin{eqnarray*}
	\struc{f_{\rm sos}} & = & \sup\left\{\lambda \in \mathbb R \ : \ f - \lambda = \sum_{i=1}^k q_i^2 \ \text{ for some } \ q_i\in\R[\mathbf{x}]\right\}.
\end{eqnarray*}
 The bound $f_{\rm sos}$ for the optimal SOS decomposition of $f$ can be determined by semidefinite programming. By construction, we have $f_{\rm sos} \leq f^*$; see \cite{Lasserre:NonnegativeApplications}.

A key observation is that the bounds obtained by this approach can be better than the ones obtained by SDP as the following result shows; see \cite[Corollary 3.6]{Iliman:deWolff:GP}.

\begin{cor}\label{cor:gpbessersonc}
Let $f$ be an ST-polynomial with $\Delta(A) = \Delta(f)$ such that $\Delta(f)$ is contained in the interior of $\New(f)$. Let $\boldsymbol{\alp}(0)$ be the origin and suppose that there exists a vector $\mathbf{v}\in (\R^*)^n$ such that $f_{\boldsymbol{\alp}} \cdot \mathbf{v}^{\boldsymbol{\alp}} <0$ for all $\boldsymbol{\alp}\in \Delta(f)$. Then
\begin{eqnarray*}
 f_{\rm sonc} \ = \ f^* \ \geq \ f_{\rm sos}.
\end{eqnarray*}
\end{cor}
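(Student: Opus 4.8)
The plan is to prove the only non-trivial inequality, namely $f_{\rm sonc}\ge f^{*}$; the opposite inequality holds because a SONC is nonnegative and $\boldsymbol{\alp}(0)$ is the origin (so $\mathbf x^{\boldsymbol{\alp}(0)}=1$), and $f_{\rm sos}\le f^{*}$ is classical. I would show that $f-f^{*}$ is \emph{itself} a SONC. If $\Delta(f)=\emptyset$ this is immediate, since then $f$ is a sum of monomial squares and $f^{*}=f(\mathbf 0)=f_{\boldsymbol{\alp}(0)}$; so assume $\Delta(f)\neq\emptyset$. The coordinate scaling $\mathbf x\mapsto(v_{1}x_{1},\dots,v_{n}x_{n})$ is a homeomorphism of $\R^{n}$ that fixes $f^{*}$; because every vertex of $\New(f)$ is even (by \assum{}), it takes a nonnegative circuit polynomial to one again --- the circuit number is multiplied by $|\mathbf v^{\boldsymbol\beta}|$, which exactly compensates the change of the tail coefficient from $g_{\boldsymbol\beta}$ to $g_{\boldsymbol\beta}\mathbf v^{\boldsymbol\beta}$ --- and therefore preserves membership in SONC (and the value $f_{\rm sonc}$). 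Hence it suffices to treat $f(\mathbf v\cdot\mathbf x)$, i.e. I may assume $f_{\boldsymbol\beta}<0$ for all $\boldsymbol\beta\in\Delta(f)$ and write $f=f_{\boldsymbol{\alp}(0)}+\sum_{j=1}^{r}c_{j}\mathbf x^{\boldsymbol{\alp}(j)}-\sum_{\boldsymbol\beta\in\Delta(f)}d_{\boldsymbol\beta}\mathbf x^{\boldsymbol\beta}$ with $c_{j}=f_{\boldsymbol{\alp}(j)}>0$ and $d_{\boldsymbol\beta}=-f_{\boldsymbol\beta}>0$.

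The heart of the argument, and the step I expect to be the main obstacle, is to produce a global minimizer $\mathbf x^{*}$ of $f$ lying in the open orthant $\R_{>0}^{n}$. Replacing each coordinate of a point by its absolute value does not increase $f$ (the even vertex terms are unchanged, and each tail term has a negative coefficient while $|\mathbf x|^{\boldsymbol\beta}\ge\mathbf x^{\boldsymbol\beta}$), so $\inf_{\R^{n}}f=\inf_{\R_{>0}^{n}}f$ by continuity. Since $\boldsymbol{\alp}(0)=\mathbf 0$ and the $\boldsymbol{\alp}(j)$ are affinely independent, $\boldsymbol{\alp}(1),\dots,\boldsymbol{\alp}(r)$ are linearly independent, so the monomial map $\mathbf x\mapsto(\mathbf x^{\boldsymbol{\alp}(1)},\dots,\mathbf x^{\boldsymbol{\alp}(r)})$ sends $\R_{>0}^{n}$ onto $\R_{>0}^{r}$; and since $\boldsymbol\beta=\sum_{j\ge1}\lambda_{j}^{(\boldsymbol\beta)}\boldsymbol{\alp}(j)$ one has $\mathbf x^{\boldsymbol\beta}=\prod_{j\ge1}(\mathbf x^{\boldsymbol{\alp}(j)})^{\lambda_{j}^{(\boldsymbol\beta)}}$. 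Thus on $\R_{>0}^{n}$ the polynomial $f$ factors through
\[
  h(s)=f_{\boldsymbol{\alp}(0)}+\sum_{j=1}^{r}c_{j}s_{j}-\sum_{\boldsymbol\beta\in\Delta(f)}d_{\boldsymbol\beta}\prod_{j=1}^{r}s_{j}^{\lambda_{j}^{(\boldsymbol\beta)}},\qquad s\in\R_{>0}^{r}.
\]
By the interiority hypothesis $\lambda_{0}^{(\boldsymbol\beta)}>0$, hence $\sum_{j\ge1}\lambda_{j}^{(\boldsymbol\beta)}=1-\lambda_{0}^{(\boldsymbol\beta)}<1$ for every $\boldsymbol\beta$; weighted AM--GM then makes $h$ coercive on $\R_{\ge0}^{r}$, and its tail monomials extend continuously by $0$ to the boundary, so $h$ attains its infimum at some $s^{*}\in\R_{\ge0}^{r}$. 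Again by interiority every $\lambda_{j}^{(\boldsymbol\beta)}>0$, so each tail monomial vanishes identically on every proper coordinate face, where therefore $h$ equals $f_{\boldsymbol{\alp}(0)}$ plus a nonnegative posynomial and thus $h\ge f_{\boldsymbol{\alp}(0)}$; on the other hand, evaluating $h$ at $s_{j}=\varepsilon$ and letting $\varepsilon\to0^{+}$ (again using $1-\lambda_{0}^{(\boldsymbol\beta)}<1$) shows $\inf h=f^{*}<f_{\boldsymbol{\alp}(0)}$. Hence $s^{*}\in\R_{>0}^{r}$, and pulling it back through the monomial map gives $\mathbf x^{*}\in\R_{>0}^{n}$ with $f(\mathbf x^{*})=h(s^{*})=f^{*}$; being a global minimizer over $\R^{n}$, it satisfies $\nabla f(\mathbf x^{*})=0$.

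It then remains to read off the decomposition from $\mathbf x^{*}$. Writing $\wh{\boldsymbol{\alp}}(j)=(\boldsymbol{\alp}(j),1)\in\R^{n+1}$, so that $\wh{\boldsymbol\beta}=\sum_{j=0}^{r}\lambda_{j}^{(\boldsymbol\beta)}\wh{\boldsymbol{\alp}}(j)$, the equations $x_{i}^{*}\cdot\partial f/\partial x_{i}(\mathbf x^{*})=0$ ($1\le i\le n$) and $f(\mathbf x^{*})=f^{*}$ package into the single vector identity
\begin{align*}
  \sum_{j=1}^{r}c_{j}(\mathbf x^{*})^{\boldsymbol{\alp}(j)}\,\wh{\boldsymbol{\alp}}(j)+\big(f_{\boldsymbol{\alp}(0)}-f^{*}\big)\wh{\boldsymbol{\alp}}(0)
  &=\sum_{\boldsymbol\beta\in\Delta(f)}d_{\boldsymbol\beta}(\mathbf x^{*})^{\boldsymbol\beta}\,\wh{\boldsymbol\beta}\\
  &=\sum_{j=0}^{r}\Big(\sum_{\boldsymbol\beta}d_{\boldsymbol\beta}\lambda_{j}^{(\boldsymbol\beta)}(\mathbf x^{*})^{\boldsymbol\beta}\Big)\wh{\boldsymbol{\alp}}(j),
\end{align*}
and, comparing coefficients in the linearly independent family $\wh{\boldsymbol{\alp}}(0),\dots,\wh{\boldsymbol{\alp}}(r)$, one gets $\sum_{\boldsymbol\beta}d_{\boldsymbol\beta}\lambda_{j}^{(\boldsymbol\beta)}(\mathbf x^{*})^{\boldsymbol\beta}=c_{j}(\mathbf x^{*})^{\boldsymbol{\alp}(j)}$ for $1\le j\le r$ and $=f_{\boldsymbol{\alp}(0)}-f^{*}$ for $j=0$. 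Now set $a_{\boldsymbol\beta,j}:=d_{\boldsymbol\beta}\lambda_{j}^{(\boldsymbol\beta)}(\mathbf x^{*})^{\boldsymbol\beta}/(\mathbf x^{*})^{\boldsymbol{\alp}(j)}>0$ and $g_{\boldsymbol\beta}:=\sum_{j=0}^{r}a_{\boldsymbol\beta,j}\mathbf x^{\boldsymbol{\alp}(j)}-d_{\boldsymbol\beta}\mathbf x^{\boldsymbol\beta}$. The identities just obtained say exactly $\sum_{\boldsymbol\beta}a_{\boldsymbol\beta,j}=c_{j}$ for $j\ge1$ and $\sum_{\boldsymbol\beta}a_{\boldsymbol\beta,0}=f_{\boldsymbol{\alp}(0)}-f^{*}$, hence $\sum_{\boldsymbol\beta}g_{\boldsymbol\beta}=f-f^{*}$. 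Each $g_{\boldsymbol\beta}$ is a circuit polynomial on the simplex $\{\boldsymbol{\alp}(0),\dots,\boldsymbol{\alp}(r)\}$ together with the interior point $\boldsymbol\beta$, and its circuit number is $\prod_{j\in{\rm nz}(\boldsymbol\beta)}(a_{\boldsymbol\beta,j}/\lambda_{j}^{(\boldsymbol\beta)})^{\lambda_{j}^{(\boldsymbol\beta)}}=d_{\boldsymbol\beta}\,(\mathbf x^{*})^{\boldsymbol\beta-\sum_{j}\lambda_{j}^{(\boldsymbol\beta)}\boldsymbol{\alp}(j)}=d_{\boldsymbol\beta}$; since its tail coefficient equals $-d_{\boldsymbol\beta}$ and $|-d_{\boldsymbol\beta}|=d_{\boldsymbol\beta}=\Theta_{g_{\boldsymbol\beta}}(\boldsymbol\beta)$, Theorem~\ref{Thm:Positiv} gives $g_{\boldsymbol\beta}\ge0$. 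Therefore $f-f^{*}$ is a SONC, so $f_{\rm sonc}\ge f^{*}$, and undoing the initial coordinate scaling transfers this to the original $f$. Combined with $f_{\rm sonc}\le f^{*}$ and $f_{\rm sos}\le f^{*}$, this yields $f_{\rm sonc}=f^{*}\ge f_{\rm sos}$.
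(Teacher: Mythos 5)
Your argument is correct. Since this corollary is imported here from \cite[Corollary 3.6]{Iliman:deWolff:GP} without proof, the route on record is different from yours: there the equality $f_{\rm sonc}=f^*$ is deduced from the structural fact (going back to \cite{Iliman:deWolff:Circuits}) that for ST-polynomials whose tail terms are all simultaneously negative on some orthant, nonnegativity is \emph{equivalent} to being a SONC, and the inequality $f^*\geq f_{\rm sos}$ is the classical one. You instead give a self-contained, constructive proof: after the diagonal scaling by $\mathbf{v}$ (legitimate, since even vertices make the scaling preserve both $f^*$ and SONC membership, the circuit number picking up exactly the factor $|\mathbf{v}^{\boldsymbol{\beta}}|$), you use $\Delta(A)=\Delta(f)$ and interiority to reduce the infimum to the open orthant, push it through the monomial substitution $s_j=\mathbf{x}^{\boldsymbol{\alp}(j)}$ (surjective because $\boldsymbol{\alp}(0)=\mathbf{0}$ makes $\boldsymbol{\alp}(1),\ldots,\boldsymbol{\alp}(r)$ linearly independent), obtain coercivity from $\sum_{j\geq 1}\lambda_j^{(\boldsymbol{\beta})}<1$, rule out boundary minimizers via the curve $s=(\eps,\ldots,\eps)$, and then read off an explicit SONC decomposition of $f-f^*$ from the Euler-type identities $x_i\partial_i f(\mathbf{x}^*)=0$ and $f(\mathbf{x}^*)=f^*$ at an interior global minimizer; the circuit numbers of the resulting circuit polynomials come out exactly equal to $|f_{\boldsymbol{\beta}}|$, so Theorem \ref{Thm:Positiv} applies (in the equality case, covering both even and non-even $\boldsymbol{\beta}$), and the degenerate case $\Delta(f)=\emptyset$ is handled separately. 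What your approach buys is an explicit decomposition (and, in effect, a reproof of the underlying equivalence of nonnegativity and SONC for this class) using only the single-circuit criterion of Theorem \ref{Thm:Positiv}; what the cited route buys is brevity, since it delegates exactly this minimizer/first-order analysis to the earlier papers. I see no gap in your argument.
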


\subsection{SONC Certificates for the Constrained Case}
\label{SubSec:PrelimConstraint}

In this subsection we restate facts from \cite[Section 5]{Iliman:deWolff:GP} about SONC certificates applied to constrained polynomial optimization problems.

Let $f, g_1, \ldots,g_s$ be elements of the polynomial ring $\R[\mathbf{x}]$ and let 
\begin{eqnarray*}
\struc{K} \ = \ \{\mathbf{x} \in \mathbb R^n \ : \ g_i(\mathbf{x}) \geq 0 \text{ for all } i = 1,\ldots,s\}
\end{eqnarray*}
be a basic closed semialgebraic set defined by $g_1,\ldots,g_s$. We consider the constrained polynomial optimization problem 
\begin{eqnarray*}
\struc{f_K^*} \ = \ \inf_{\mathbf{x} \in K}^{}f(\mathbf{x}) \ = \ \sup\{\gamma \in \R \ : \ f(\mathbf{x}) - \gamma \geq 0 \;\text{ for all } \mathbf{x} \in K \}. 
\end{eqnarray*}
If $s=0$, then we have no $g_i$ and therefore $K=\R^n$, which leads to the global optimization problem explained in Section \ref{SubSec:PrelimGPCertificates}.

To obtain a general lower bound for $f$ on $K$ which is computable by geometric programming we 
replace the considered polynomials by a new function. Let
\begin{eqnarray}
\struc{G(\boldsymbol{\mu})(\mathbf{x})} \ = \ f(\mathbf{x}) - \sum_{i=1}^{s} \mu_ig_i(\mathbf{x}) \ = \ - \sum_{i=0}^{s} \mu_ig_i(\mathbf{x}) \label{Equ:DefG}
\end{eqnarray}

for $\struc{\boldsymbol{\mu}} = (\mu_1,\dots,\mu_s) \in  \R_{\geq 0}^s$, $\struc{g_0} = -f$ and $\struc{\mu_0} = 1$. For every fixed $\boldsymbol{\mu}^* \in  \R_{\geq 0}^s$ the function $\struc{G(\mathbf{x})} = G(\boldsymbol{\mu}^*)(\mathbf{x})$ is a polynomial in $\R[\mathbf{x}]$. Following an argument in \cite{Ghasemi:Marshall:GP:Semialgebraic} we can assume that all monomial squares of $-g_i$ are vertices of $\New(G(\boldsymbol{\mu}))$: One can reduce to this case by neglecting all monomial squares not corresponding to such a vertex. That is, for all $i=0,\ldots,s$ one can replace $g_i$ by $\tilde{g}_i$, which resemble $g_i$ without monomial squares of $-g_i$ in the interior of $\New(G(\boldsymbol{\mu}))$. Then $-\tilde{g}_i\leq -g_i$ on $\R^n$ for $i=0,\ldots,s$, thus, $K\subseteq \tilde{K}$, where $\tilde{K}=\{\mathbf{x} \in \mathbb R^n \ : \ \tilde{g}_i(\mathbf{x}) \geq 0,\,\, 1\leq i\leq s\}$, as well as $f_{\tilde{K}}^*\leq f_K^*$. 

Let $\struc{A_i} \subset \N^n$ be the support of the polynomial $g_i$ for $i = 0,\ldots,s$ and let $\struc{A} = \bigcup_{i = 0}^s A_i$ be the union of all supports of polynomials $g_i$. We remark that while we consider a fixed support the Newton polytope of $G(\boldsymbol{\mu})$ is not invariant in general since certain $\mu_i$ might equal $0$ or term cancellation might occur. If for some $\boldsymbol{\mu} \in \R_{\geq 0}^s$ the polynomial $G(\boldsymbol{\mu})$ is an ST-polynomial, then we assume that $\New(G(\boldsymbol{\mu})) = \conv(A)$ and $V(A) = \{\boldsymbol{\alp}(0),\ldots,\boldsymbol{\alp}(r)\}  \subset (2\N)^n$ and we denote $\struc{G(\boldsymbol{\mu})_{\rm sonc}}$ as the optimal value of the geometric program in Corollary \ref{Cor:GP}. Theorem \ref{Thm:STPolynomialNN} implies that $G(\boldsymbol{\mu}) - G(\boldsymbol{\mu})_{\rm sonc} \mathbf{x}^{\boldsymbol{\alp}(0)} \geq 0$ and $G(\boldsymbol{\mu})_{\rm sonc} \in \R$ is the maximal possible choice for nonnegativity. Hence, we obtain a bound for the coefficient of the term  $\mathbf{x}^{\boldsymbol{\alp}(0)}$ depending on the other coefficients of $G(\boldsymbol{\mu})$ certifying nonnegativity of $G(\boldsymbol{\mu})$. If $G(\boldsymbol{\mu})$ is not an ST-polynomial for some $\boldsymbol{\mu} \in \R_{\geq 0}^s$, then we set $G(\boldsymbol{\mu})_{\rm sonc} = -\infty$, since the corresponding geometric program is infeasible. Thus, by \eqref{Equ:DefG}, if $\boldsymbol{\mu}$ is fixed, then $G(\boldsymbol{\mu})_{\rm sonc}$ is a lower bound for $f$ on the semialgebraic set $K$ regarding the coefficient of $\mathbf{x}^{\boldsymbol{\alp}(0)}$. Let $\struc{\mathbf{g}} = (g_1,\dots ,g_s)$. We define
\begin{eqnarray*}
 \struc{s(f,\mathbf{g})} \ = \ \sup\{G(\boldsymbol{\mu})_{\rm sonc} : \boldsymbol{\mu} \in \R_{\geq 0}^s\}.
\end{eqnarray*}
Thus, we have particularly for $\boldsymbol{\alp}(0) = \mathbf{0}$:
\begin{eqnarray}
 s(f,\mathbf{g}) \ \leq \ f_K^*. \label{Equ:fKStern}
\end{eqnarray}

For every fixed $\boldsymbol{\mu}$ the bound $G(\boldsymbol{\mu})_{\rm sonc}$ is computable by a geometric program. Unfortunately, this does not imply that the supremum is computable by a GP as well. However, following ideas by Ghasemi and Marshall \cite{Ghasemi:Marshall:GP} the second and third author presented a general optimization program for a lower bound of $s(f,\mathbf{g})$ in \cite{Iliman:deWolff:GP}, which is a geometric program under special conditions. We recall these results in what follows.

We define $\struc{\Delta(A)}$ in the sense of Section \ref{SubSec:PrelimSONC} as the set of exponents of the tail terms of $G(\boldsymbol{\mu})$ and $\struc{\Delta(G(\boldsymbol{\mu}))} \subseteq \Delta(A)$ as the set of exponents which have a non-zero coefficient and are not a monomial square. Moreover, we define $\struc{\Delta(G)} = \bigcup_{\boldsymbol{\mu} \in \R^s_{\geq 0}}\Delta(G(\boldsymbol{\mu}))$. Note that $\Delta(G(\boldsymbol{\mu})) \subseteq \Delta(G) \subseteq \Delta(A)$ for all $\boldsymbol{\mu}$. We have by Section \ref{SubSec:PrelimSONC}, Definition \ref{Def:STPolynomial}
\begin{eqnarray*}
 & & G(\boldsymbol{\mu})(\mathbf{x}) \ = \ - \sum_{i = 0}^s \mu_i g_i(\mathbf{x}) \ = \ \sum_{j = 0}^r G(\boldsymbol{\mu})_{\boldsymbol{\alp}(j)} \mathbf{x}^{\boldsymbol{\alp}(j)} + \sum_{\boldsymbol{\beta} \in \Delta(G)} G(\boldsymbol{\mu})_{\boldsymbol{\beta}} \mathbf{x}^{\boldsymbol{\beta}}
\end{eqnarray*}
with coefficients $\struc{G(\boldsymbol{\mu})_{\boldsymbol{\alp}(j)}},\struc{G(\boldsymbol{\mu})_{\boldsymbol{\beta}}} \in \R$ depending on $\boldsymbol{\mu}$. We set the coefficients $G(\boldsymbol{\mu})_{\boldsymbol{\beta}} = 0$ for all $\boldsymbol{\beta} \in \Delta(G) \setminus \Delta(G(\boldsymbol{\mu}))$.

As before, we denote by $\{\lambda_0^{(\boldsymbol{\beta})},\dots,\lambda_r^{(\boldsymbol{\beta})}\}$ the barycentric coordinates of the lattice point $\boldsymbol{\beta} \in \Delta(A)$ with respect to the vertices of the simplex $\New(G(\boldsymbol{\mu})) = \conv(A)$. We define for every $\boldsymbol{\beta} \in \Delta(G)$ a set
\begin{eqnarray*}
 \struc{R_{\boldsymbol{\beta}}} & = & \{\mathbf{a}_{\boldsymbol{\beta}} \ : \ \mathbf{a}_{\boldsymbol{\beta}} = (a_{\boldsymbol{\beta},1},\ldots,a_{\boldsymbol{\beta},r}) \in \R_{>0}^r\}.
\end{eqnarray*}
Furthermore, we define the nonnegative  real set $R$ as
\begin{eqnarray*}
\struc{R} & = & [0,\infty)^s \times \bigtimes_{\boldsymbol{\beta} \in \Delta(G)} (R_{\boldsymbol{\beta}} \times \R_{\geq 0}).
\end{eqnarray*}
Hence, $R$ is the Cartesian product of $[0,\infty)^s$ and $|\Delta(G)|$ many copies $\R_{>0}^r \times \R_{\geq 0}$; each given by one $R_{\boldsymbol{\beta}}$ with $\boldsymbol{\beta} \in \Delta(G)$ and one $\R_{\geq 0}$. We define the function $p$ from $R$ to $\R_{\geq 0}$ as
\begin{eqnarray*}
 & & \struc{p(\boldsymbol{\mu},\{(\mathbf{a}_{\boldsymbol{\beta}},b_{\boldsymbol{\beta}}) \ : \ \boldsymbol{\beta} \in \Delta(G)\})} \ = \ \\
 & & \sum_{i=1}^{s} \mu_i g_{i,\boldsymbol{\alp}(0)} + 
 \sum_{\substack{\boldsymbol{\beta} \in \Delta(G) \\ \lambda_0^{(\boldsymbol{\beta})} \neq 0}}^{}\lambda_0^{(\boldsymbol{\beta})} \cdot b_{\boldsymbol{\beta}}^{\frac{1}{\lambda_0^{(\boldsymbol{\beta})}}}\cdot \prod_{\substack{j\in {\rm nz}(\boldsymbol{\beta}) \\ j\geq 1}} \left(\frac{\lambda_j^{(\boldsymbol{\beta})}}{a_{\boldsymbol{\beta},j}}\right)^{\frac{\lambda_j^{(\boldsymbol{\beta})}}{\lambda_0^{(\boldsymbol{\beta})}}}
\end{eqnarray*}
where, as before, $\struc{\boldsymbol{\alp}(0)}$ is a vertex of $\New(G(\boldsymbol{\mu}))$ and $\struc{g_{i,\boldsymbol{\alp}(0)}}$ is the coefficient of the monomial $\mathbf{x}^{\boldsymbol{\alp}(0)}$ in the polynomial $g_i$.

For the coefficient $G(\boldsymbol{\mu})_{\boldsymbol{\beta}}$ of the term with exponent $\boldsymbol{\beta}$ of $G(\boldsymbol{\mu})$ we use the notation $G(\boldsymbol{\mu})_{\boldsymbol{\beta}} = - \sum_{i = 0}^s \mu_i \cdot \struc{g_{i,\boldsymbol{\beta}}}$. In other words, $G(\boldsymbol{\mu})_{\boldsymbol{\beta}}$ is a linear form in the $\mu_i$'s given by the coefficients of the polynomials $g_i$; analogously for $G(\boldsymbol{\mu})_{\boldsymbol{\alp}(j)}$. We consider the following optimization problem:

\begin{eqnarray}
& & \begin{cases}
\text{minimize} &
p(\boldsymbol{\mu},\{(\mathbf{a}_{\boldsymbol{\beta}},b_{\boldsymbol{\beta}}) \ : \ \boldsymbol{\beta} \in \Delta(G)\})
\text { over the subset of } R
\\
& \\
\text{defined by:} & 
\begin{array}{cl}
 (1) & \sum\limits_{\boldsymbol{\beta}\in\Delta(G)} a_{\boldsymbol{\beta},j} \, \leq \, G(\boldsymbol{\mu})_{\boldsymbol{\alp}(j)} \ \text{ for all } \ 1 \leq j \leq r,  \\
 (2) & \prod\limits_{j\in {\rm nz}(\boldsymbol{\beta})} \left(\frac{a_{\boldsymbol{\beta},j}}{\lambda_j^{(\boldsymbol{\beta})}}\right)^{\lambda_j^{(\boldsymbol{\beta})}} \mspace{-12mu} \geq \,  b_{\boldsymbol{\beta}} \ \text{ for every } \boldsymbol{\beta}\in \Delta(G) \text{ with } \lambda_0^{(\boldsymbol{\beta})}=0, \text{ and }\\
(3) & |G(\boldsymbol{\mu})_{\boldsymbol{\beta}}| \leq b_{\boldsymbol{\beta}} \ \text{ for every } \boldsymbol{\beta}\in \Delta(G) \text{ with } \lambda_0^{(\boldsymbol{\beta})}\neq0. \\
\end{array}
\end{cases}
\label{Equ:Constrained}
\end{eqnarray}

In \cite[Theorems 5.1 and 5.2]{Iliman:deWolff:GP} the second and third author show the following theorem.

\begin{thm}
Let $\struc{\gamma}$ be the optimal value of the optimization problem \eqref{Equ:Constrained}. Then we have $f_{\boldsymbol{\alp}(0)} - \gamma \leq s(f,\mathbf{g})$. The optimization problem \eqref{Equ:Constrained} restricted to $\boldsymbol{\mu} \in (0,\infty)^s$ is a signomial program if for every $\boldsymbol{\beta} \in \Delta(G)$ it holds that $G(\boldsymbol{\mu})_{\boldsymbol{\beta}}$ has the same sign for every choice of $\boldsymbol{\mu}$. 

Assume additionally that every linear form $G(\boldsymbol{\mu})_{\boldsymbol{\alp}(j)} = - \sum_{i = 0}^s \mu_i \cdot g_{i,\boldsymbol{\alp}(j)}$ corresponding to a vertex $\boldsymbol{\alp}(j)$ of $\New(G(\boldsymbol{\mu}))$ has only one summand and is strictly positive. Assume moreover that for all $\boldsymbol{\beta} \in \Delta(G)$ the linear form $G(\boldsymbol{\mu})_{\boldsymbol{\beta}} = - \sum_{i = 0}^s \mu_i \cdot g_{i,\boldsymbol{\beta}}$ has only positive terms. If furthermore all $g_{i,\boldsymbol{\alp}(0)}$ for $1 \leq i \leq s$ are greater than or equal to zero, then \eqref{Equ:Constrained} is a geometric program.
\label{Thm:GPOldConstrainedCase}
\end{thm}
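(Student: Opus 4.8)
The plan is to prove the three assertions in turn, the first being the substantive one and the other two a term‑by‑term inspection of the program \eqref{Equ:Constrained}. For the inequality $f_{\boldsymbol{\alp}(0)}-\gamma\le s(f,\mathbf g)$ the idea is to identify, for each fixed $\boldsymbol{\mu}\in\R_{\ge0}^s$, the $(\mathbf a_{\boldsymbol{\beta}},b_{\boldsymbol{\beta}})$‑slice of \eqref{Equ:Constrained} with the geometric program of Corollary \ref{Cor:GP} applied to the polynomial $G(\boldsymbol{\mu})$. First I would use $g_0=-f$ and $\mu_0=1$ to rewrite the linear part of the objective as $\sum_{i=1}^s\mu_i g_{i,\boldsymbol{\alp}(0)}=f_{\boldsymbol{\alp}(0)}-G(\boldsymbol{\mu})_{\boldsymbol{\alp}(0)}$, so that $p$ equals $f_{\boldsymbol{\alp}(0)}-G(\boldsymbol{\mu})_{\boldsymbol{\alp}(0)}$ plus the objective of the geometric program of Corollary \ref{Cor:GP} for $G(\boldsymbol{\mu})$, but with each $b_{\boldsymbol{\beta}}$ in place of $|G(\boldsymbol{\mu})_{\boldsymbol{\beta}}|$. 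Given a feasible point $(\boldsymbol{\mu},(\mathbf a_{\boldsymbol{\beta}},b_{\boldsymbol{\beta}})_{\boldsymbol{\beta}})$ of \eqref{Equ:Constrained}, I would set $k$ to be $G(\boldsymbol{\mu})_{\boldsymbol{\alp}(0)}$ minus the value of that Corollary \ref{Cor:GP} objective at the $\mathbf a_{\boldsymbol{\beta}}$'s with the true coefficients $|G(\boldsymbol{\mu})_{\boldsymbol{\beta}}|$, and then verify that the $a_{\boldsymbol{\beta},j}$'s and this $k$ satisfy conditions (1)--(4) of Theorem \ref{Thm:STPolynomialNN} with $G(\boldsymbol{\mu})$ in the role of $f$: constraint (1) of \eqref{Equ:Constrained} gives condition (3) of the theorem, constraints (2) and (3) of \eqref{Equ:Constrained} together with the defining inequalities of the auxiliary variables give its condition (2), and the choice of $k$ is its condition (4). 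Theorem \ref{Thm:STPolynomialNN} then shows $G(\boldsymbol{\mu})-k\,\mathbf x^{\boldsymbol{\alp}(0)}$ is a SONC, hence $k\le G(\boldsymbol{\mu})_{\rm sonc}$; since $b_{\boldsymbol{\beta}}\ge|G(\boldsymbol{\mu})_{\boldsymbol{\beta}}|$ and $p$ is nondecreasing in the $b_{\boldsymbol{\beta}}$, we get $p\ge f_{\boldsymbol{\alp}(0)}-k\ge f_{\boldsymbol{\alp}(0)}-G(\boldsymbol{\mu})_{\rm sonc}$. Taking the infimum over feasible points and then the supremum over $\boldsymbol{\mu}$ yields $\gamma\ge f_{\boldsymbol{\alp}(0)}-s(f,\mathbf g)$; for those $\boldsymbol{\mu}$ with $G(\boldsymbol{\mu})$ not an ST‑polynomial the corresponding slice is empty, consistently with the convention $G(\boldsymbol{\mu})_{\rm sonc}=-\infty$.

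For the second assertion I would check that, restricted to $\boldsymbol{\mu}\in(0,\infty)^s$, every ingredient of \eqref{Equ:Constrained} is of signomial type. The objective $p$ is a sum of the monomials $\mu_i g_{i,\boldsymbol{\alp}(0)}$ (degree one in $\boldsymbol{\mu}$, with real, possibly negative, coefficients) and of monomials in the $a_{\boldsymbol{\beta},j},b_{\boldsymbol{\beta}}$ with positive coefficients, hence a signomial; constraint (2) is the monomial inequality $b_{\boldsymbol{\beta}}\prod_{j}(\lambda_j^{(\boldsymbol{\beta})}/a_{\boldsymbol{\beta},j})^{\lambda_j^{(\boldsymbol{\beta})}}\le 1$; constraint (1), $\sum_{\boldsymbol{\beta}}a_{\boldsymbol{\beta},j}\le G(\boldsymbol{\mu})_{\boldsymbol{\alp}(j)}$, is a signomial inequality since $G(\boldsymbol{\mu})_{\boldsymbol{\alp}(j)}=-\sum_i\mu_i g_{i,\boldsymbol{\alp}(j)}$ is a genuine signomial in $\boldsymbol{\mu}$ on $(0,\infty)^s$. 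The only term that is not automatically of signomial type is $|G(\boldsymbol{\mu})_{\boldsymbol{\beta}}|$ in constraint (3): the absolute value of the linear form $-\sum_i\mu_i g_{i,\boldsymbol{\beta}}$ is not a signomial in general, but exactly the hypothesis that $G(\boldsymbol{\mu})_{\boldsymbol{\beta}}$ has constant sign on $(0,\infty)^s$ lets us replace $|G(\boldsymbol{\mu})_{\boldsymbol{\beta}}|$ by $\pm G(\boldsymbol{\mu})_{\boldsymbol{\beta}}$, turning (3) into the signomial inequality $(\pm G(\boldsymbol{\mu})_{\boldsymbol{\beta}})\,b_{\boldsymbol{\beta}}^{-1}\le 1$. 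This is precisely where that hypothesis enters, and it gives the signomial‑program claim.

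For the third assertion I would refine this inspection, using the stronger hypotheses to upgrade ``signomial'' to ``posynomial'' wherever the above argument produced a signomial. If each vertex form $G(\boldsymbol{\mu})_{\boldsymbol{\alp}(j)}=-\mu_{i_j}g_{i_j,\boldsymbol{\alp}(j)}$ is a single, strictly positive monomial, then one may legitimately divide constraint (1) by it, obtaining $\sum_{\boldsymbol{\beta}}\bigl(-g_{i_j,\boldsymbol{\alp}(j)}\bigr)^{-1}a_{\boldsymbol{\beta},j}\,\mu_{i_j}^{-1}\le 1$, a posynomial inequality --- this is where ``only one summand'' is essential, since dividing by an actual sum of monomials need not yield a posynomial. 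If every $G(\boldsymbol{\mu})_{\boldsymbol{\beta}}$ has only nonnegative terms, then $|G(\boldsymbol{\mu})_{\boldsymbol{\beta}}|=G(\boldsymbol{\mu})_{\boldsymbol{\beta}}$ is a posynomial and constraint (3) becomes the posynomial inequality $G(\boldsymbol{\mu})_{\boldsymbol{\beta}}\,b_{\boldsymbol{\beta}}^{-1}\le 1$. If all $g_{i,\boldsymbol{\alp}(0)}\ge 0$, the linear part of $p$ has nonnegative coefficients, so $p$ itself is a posynomial. Constraint (2) being always a monomial inequality, \eqref{Equ:Constrained} then has posynomial objective and posynomial or monomial constraints, i.e.\ it is a geometric program (the passage from the closed domain $R$ to the positive orthant is harmless by continuity). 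I expect the main obstacle to be the bookkeeping in the first part: keeping the auxiliary variables $b_{\boldsymbol{\beta}}$ and the split into the cases $\lambda_0^{(\boldsymbol{\beta})}=0$ and $\lambda_0^{(\boldsymbol{\beta})}\neq 0$ aligned with conditions (1)--(4) of Theorem \ref{Thm:STPolynomialNN}, and ensuring the argument covers \emph{all} $\boldsymbol{\mu}\in\R_{\ge0}^s$, in particular boundary values and those for which $G(\boldsymbol{\mu})$ fails to be an ST‑polynomial; the second and third parts are then only a matter of recording which sign and positivity conditions on the $g_{i,\cdot}$ are needed to turn the genuinely nonlinear pieces --- the absolute value in (3), the division in (1), and the linear part of $p$ --- into posynomials rather than mere signomials.
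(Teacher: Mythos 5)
The paper does not prove Theorem \ref{Thm:GPOldConstrainedCase} at all; it is quoted from \cite[Theorems 5.1 and 5.2]{Iliman:deWolff:GP}, so there is no in-paper proof to compare against. Your argument is the natural one and, to my reading, sound: for fixed $\boldsymbol{\mu}$ you identify the $(\mathbf{a}_{\boldsymbol{\beta}},b_{\boldsymbol{\beta}})$-slice of \eqref{Equ:Constrained} with the program of Corollary \ref{Cor:GP} for $G(\boldsymbol{\mu})$ (using $\sum_{i=1}^s \mu_i g_{i,\boldsymbol{\alp}(0)} = f_{\boldsymbol{\alp}(0)} - G(\boldsymbol{\mu})_{\boldsymbol{\alp}(0)}$, which is correct), produce from any feasible point a $k$ with $G(\boldsymbol{\mu})-k\mathbf{x}^{\boldsymbol{\alp}(0)}$ a SONC via Theorem \ref{Thm:STPolynomialNN}, conclude $p \geq f_{\boldsymbol{\alp}(0)} - k \geq f_{\boldsymbol{\alp}(0)} - G(\boldsymbol{\mu})_{\rm sonc} \geq f_{\boldsymbol{\alp}(0)} - s(f,\mathbf{g})$, and then obtain the signomial and geometric program claims by syntactic inspection; your identification of exactly where each hypothesis enters (constant sign of $G(\boldsymbol{\mu})_{\boldsymbol{\beta}}$ for the absolute value in (3), a single strictly positive summand at each vertex to divide constraint (1), and $g_{i,\boldsymbol{\alp}(0)} \geq 0$ for the objective) is correct and is presumably how the cited reference argues.

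Two points of care in the first part. When you say that constraints (2) and (3) of \eqref{Equ:Constrained} together yield condition (2) of Theorem \ref{Thm:STPolynomialNN}, you need $b_{\boldsymbol{\beta}} \geq |G(\boldsymbol{\mu})_{\boldsymbol{\beta}}|$ also for $\boldsymbol{\beta}$ with $\lambda_0^{(\boldsymbol{\beta})}=0$; as \eqref{Equ:Constrained} is typeset in this paper, constraint (3) is restricted to $\lambda_0^{(\boldsymbol{\beta})}\neq 0$, so for $\lambda_0^{(\boldsymbol{\beta})}=0$ the variable $b_{\boldsymbol{\beta}}$ is not tied to $|G(\boldsymbol{\mu})_{\boldsymbol{\beta}}|$ and the chain $|G(\boldsymbol{\mu})_{\boldsymbol{\beta}}| \leq b_{\boldsymbol{\beta}} \leq \prod_{j}\bigl(a_{\boldsymbol{\beta},j}/\lambda_j^{(\boldsymbol{\beta})}\bigr)^{\lambda_j^{(\boldsymbol{\beta})}}$ has a missing link. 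This is evidently a slip in the displayed program rather than in your reasoning (compare Corollary \ref{Cor:GP}, constraints (3)--(4) of \eqref{Equ:ConstrainedGP}, and the program written out in the proof of Theorem \ref{Thm:ConstrainedGPequality}, all of which impose the bound involving $|G(\boldsymbol{\mu})_{\boldsymbol{\beta}}|$ for the $\lambda_0^{(\boldsymbol{\beta})}=0$ circuits as well), but you should state explicitly that you use the intended reading. Secondly, condition (1) of Theorem \ref{Thm:STPolynomialNN} requires $a_{\boldsymbol{\beta},j}>0$ exactly when $\lambda_j^{(\boldsymbol{\beta})}>0$, whereas the program's variables are strictly positive for all $j$; one should set $a_{\boldsymbol{\beta},j}:=0$ for $j\notin{\rm nz}(\boldsymbol{\beta})$, which only slackens constraint (1) and leaves the remaining conditions untouched. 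Both are bookkeeping, not gaps in the idea.
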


\section{Constrained Polynomial Optimization via Signomial and Geometric Programming}
\label{Sec:RelaxToGP}

In this section, we provide relaxations of the program \eqref{Equ:Constrained} following ideas of Ghasemi and Marshall in \cite{Ghasemi:Marshall:GP:Semialgebraic}. The goal is to weaken the assumptions which are needed to obtain a geometric program or at least a signomial program. We provide such relaxations in the programs \eqref{Equ:ConstrainedGP} and \eqref{Equ:ConstrainedSNP} and provide the desired properties in the Theorems \ref{Thm:ConstrainedGP} and \ref{Thm:ConstrainedSP}. Moreover, we show that under certain extra assumptions the bound obtained by the new program \eqref{Equ:ConstrainedGP} equals the optimal bound $s(f,\mathbf{g})$ from the previous section; see Theorem \ref{Thm:ConstrainedGPequality}.\\

Let all notation regarding $G(\boldsymbol{\mu})$ be given as in Section \ref{SubSec:PrelimConstraint}. Assume that we have for each $0 \leq i \leq s$
\begin{eqnarray*}
 \struc{g_i} & = & \sum_{\boldsymbol{\beta} \in A_i} g_{i,\boldsymbol{\beta}} \cdot \mathbf{x}^{\boldsymbol{\beta}}
\end{eqnarray*}
with $g_{i,\boldsymbol{\beta}} \in \R$. We have $\Delta(A_i) \subseteq \Delta(A)$ and hence write 
\begin{eqnarray*}
 g_i & = & \sum_{j = 0}^r g_{i,\boldsymbol{\alp}(j)} \mathbf{x}^{\boldsymbol{\alp}(j)} + \sum_{\boldsymbol{\beta} \in \Delta(A)} g_{i,\boldsymbol{\beta}} \mathbf{x}^{\boldsymbol{\beta}}
\end{eqnarray*}
and set $g_{i,\boldsymbol{\alp}(j)} = 0$ for all $\boldsymbol{\alp}(j) \in V(A) \setminus A_i$ and $g_{i,\boldsymbol{\beta}} = 0$ for all $\boldsymbol{\beta} \in \Delta(A) \setminus A_i$. We remark that three cases can occur for $\boldsymbol{\beta} \in \Delta(A) \cap A_i$:
\begin{enumerate}
 \item $-g_{i,\boldsymbol{\beta}} \mathbf{x}^{\boldsymbol{\beta}}$ is not a monomial square. Then we have $\boldsymbol{\beta} \in \Delta(G)$.
 \item $-g_{i,\boldsymbol{\beta}} \mathbf{x}^{\boldsymbol{\beta}}$ is a monomial square, but there exists another $g_l$ such that $-g_{l,\boldsymbol{\beta}} \mathbf{x}^{\boldsymbol{\beta}}$ is not a monomial square. Then we have $\boldsymbol{\beta} \in \Delta(G)$.
 \item $-g_{i,\boldsymbol{\beta}} \mathbf{x}^{\boldsymbol{\beta}}$ is a monomial square, and there exists no other $g_l$ such that $-g_{l,\boldsymbol{\beta}} \mathbf{x}^{\boldsymbol{\beta}}$ is not a monomial square. Then we have $\boldsymbol{\beta} \notin \Delta(G)$.
\end{enumerate}

Sums of monomial squares as described in case (3) are ignored in our program \eqref{Equ:Constrained}.
Hence, we can also ignore this case here. We investigate the other two cases in detail now. As already mentioned in Section \ref{SubSec:PrelimConstraint} we can interpret the coefficients $G(\boldsymbol{\mu})_{\boldsymbol{\alp}(j)}$ and $G(\boldsymbol{\mu})_{\boldsymbol{\beta}}$ as linear forms in $\boldsymbol{\mu}$ since we have for all $j = 0,\ldots,r$
\begin{eqnarray*}
 & & \struc{G(\boldsymbol{\mu})_{\boldsymbol{\alp}(j)}} \ = \ - \sum_{i = 0}^s \mu_i \cdot g_{i,\boldsymbol{\alp}(j)} \ \text{ and } \ \struc{G(\boldsymbol{\mu})_{\boldsymbol{\beta}}} \ = \  - \sum_{i = 0}^s \mu_i \cdot g_{i,\boldsymbol{\beta}}.
\end{eqnarray*}

We decompose every $G(\boldsymbol{\mu})_{\boldsymbol{\beta}}$ into a positive and a negative part such that $G(\boldsymbol{\mu})_{\boldsymbol{\beta}}=G(\boldsymbol{\mu})_{\boldsymbol{\beta}}^+-G(\boldsymbol{\mu})_{\boldsymbol{\beta}}^-$, where

\begin{eqnarray}
& & \struc{G(\boldsymbol{\mu})_{\boldsymbol{\beta}}^-} \ = \ \sum_{g_{i,\boldsymbol{\beta}}>0} \mu_i \cdot g_{i,\boldsymbol{\beta}} \ \text{ and } \ \struc{G(\boldsymbol{\mu})_{\boldsymbol{\beta}}^+} \ = \ - \sum_{g_{i,\boldsymbol{\beta}}<0} \mu_i \cdot g_{i,\boldsymbol{\beta}}. \label{Equ:Gplusminus}
\end{eqnarray}

This decomposition is \textit{independent} of the choice of $\boldsymbol{\mu}$ in the sense that no $g_{i,\boldsymbol{\beta}}$ can be a summand of \textit{both} $G(\boldsymbol{\mu})_{\boldsymbol{\beta}}^+$ and $G(\boldsymbol{\mu})_{\boldsymbol{\beta}}^-$ for different choices of $\boldsymbol{\mu}$ since $\boldsymbol{\mu} \in \R^{s}_{\geq 0}$. The key idea is to redefine the constraint $b_{\boldsymbol{\beta}} \geq |G(\boldsymbol{\mu})_{\boldsymbol{\beta}}|$ by a new constraint $b_{\boldsymbol{\beta}} \geq \max\{G(\boldsymbol{\mu})_{\boldsymbol{\beta}}^+,G(\boldsymbol{\mu})_{\boldsymbol{\beta}}^-\}$. Let $R$ be defined as in Section \ref{SubSec:PrelimConstraint} and let $\struc{g_{i,\boldsymbol{\alp}(0)}^+}= \max\{g_{i,\boldsymbol{\alp}(0)},0\}$, i.e., we only consider the terms with exponents $\boldsymbol{\alp}(0)$ which are positive in the $g_i$ and thus negative in $G(\boldsymbol{\mu})$. We redefine $p$ as
\begin{eqnarray*}
 & & \struc{p(\boldsymbol{\mu},\{(\mathbf{a}_{\boldsymbol{\beta}},b_{\boldsymbol{\beta}}) \ : \ \boldsymbol{\beta} \in \Delta(G)\})} \ = \ \\
 & & \sum_{i=1}^{s} \mu_i g_{i,\boldsymbol{\alp}(0)}^+ + 
 \sum_{\substack{\boldsymbol{\beta} \in \Delta(G) \\ \lambda_0^{(\boldsymbol{\beta})} \neq 0}}^{}\lambda_0^{(\boldsymbol{\beta})} \cdot b_{\boldsymbol{\beta}}^{\frac{1}{\lambda_0^{(\boldsymbol{\beta})}}}\cdot \prod_{\substack{j\in {\rm nz}(\boldsymbol{\beta}) \\ j\geq 1}} \left(\frac{\lambda_j^{(\boldsymbol{\beta})}}{a_{\boldsymbol{\beta},j}}\right)^{\frac{\lambda_j^{(\boldsymbol{\beta})}}{\lambda_0^{(\boldsymbol{\beta})}}}.
\end{eqnarray*}

We consider the following optimization problem in the variables $\mu_1,\ldots,\mu_s$ and \\ $a_{\boldsymbol{\beta},1},\ldots,a_{\boldsymbol{\beta},r},b_{\boldsymbol{\beta}}$ for every $\boldsymbol{\beta} \in \Delta(G)$.

\begin{eqnarray}
& & \begin{cases}
\text{minimize} &
p(\boldsymbol{\mu},\{(\mathbf{a}_{\boldsymbol{\beta}},b_{\boldsymbol{\beta}}) \ : \ \boldsymbol{\beta} \in \Delta(G)\})
\text { over the subset of } R
\\
& \\
\text{defined by:} & 
\begin{array}{cl}
 (1) & \sum\limits_{\boldsymbol{\beta}\in\Delta(G)} a_{\boldsymbol{\beta},j} \, \leq \, G(\boldsymbol{\mu})_{\boldsymbol{\alp}(j)} \ \text{ for all } \ 1 \leq j \leq r,  \\
 (2) & \prod\limits_{j\in {\rm nz}(\boldsymbol{\beta})} \left(\frac{a_{\boldsymbol{\beta},j}}{\lambda_j^{(\boldsymbol{\beta})}}\right)^{\lambda_j^{(\boldsymbol{\beta})}} \, \geq \,  b_{\boldsymbol{\beta}},
 \ \text{ for all } \ \boldsymbol{\beta} \in \Delta(G) \ \text{with} \ \lambda_0^{(\boldsymbol{\beta})}=0, \\
(3) & G(\boldsymbol{\mu})_{\boldsymbol{\beta}}^+ \leq b_{\boldsymbol{\beta}} \ \text{ for all } \ \boldsymbol{\beta}\in \Delta(G), \ \text{ and } \\
(4) & G(\boldsymbol{\mu})_{\boldsymbol{\beta}}^- \leq b_{\boldsymbol{\beta}} \ \text{ for all } \ \boldsymbol{\beta}\in \Delta(G).  \\
\end{array}
\end{cases}
\label{Equ:ConstrainedGP}
\end{eqnarray}

This problem is, by condition (1), feasible only for choices of $\boldsymbol{\mu}$ such that $G(\boldsymbol{\mu})_{\boldsymbol{\alp}(j)} > 0$ for all $\boldsymbol{\alp}(j)$ since all $a_{\boldsymbol{\beta},j}$ are strictly positive. We set the output as $-\infty$ in all other cases. Indeed, with some additional assumptions the program \eqref{Equ:ConstrainedGP} is a geometric program. Moreover, it is  a relaxation of the program \eqref{Equ:Constrained}.

\begin{thm}
Assume that for every $1 \leq j \leq r$ the form $G(\boldsymbol{\mu})_{\boldsymbol{\alp}(j)}  =  - \sum_{i = 0}^s \mu_i \cdot g_{i,\boldsymbol{\alp}(j)}$ has exactly one strictly positive term, i.e. there exists exactly one strictly negative $g_{i,\boldsymbol{\alp}(j)}$. Then the optimization problem \eqref{Equ:ConstrainedGP} restricted to $\boldsymbol{\mu} \in (0,\infty)^s$ is a geometric program.  Assume that $\struc{\gamma_{\rm sonc}}$ denotes the optimal value of \eqref{Equ:ConstrainedGP} and $\struc{\gamma}$ denotes the optimal value of \eqref{Equ:Constrained}. Then we have
\begin{eqnarray*}
 f_{\boldsymbol{\alp}(0)} - \gamma_{\rm sonc} \ \leq \ f_{\boldsymbol{\alp}(0)} - \gamma \ \leq \ s(f,\mathbf{g}).
\end{eqnarray*}
\label{Thm:ConstrainedGP}
\end{thm}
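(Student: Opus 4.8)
The plan is to treat the two assertions of Theorem~\ref{Thm:ConstrainedGP} separately, and to reduce the inequality part entirely to Theorem~\ref{Thm:GPOldConstrainedCase}. Since that theorem already yields $f_{\boldsymbol{\alp}(0)}-\gamma\le s(f,\mathbf{g})$, it remains to prove $\gamma\le\gamma_{\rm sonc}$. I would do this by checking that \eqref{Equ:ConstrainedGP} is a tightening of \eqref{Equ:Constrained}: both programs optimise over subsets of the same set $R$; their constraints (1) and (2) are literally identical; and for $\boldsymbol{\beta}\in\Delta(G)$ with $\lambda_0^{(\boldsymbol{\beta})}\neq 0$, a point satisfying constraints (3) and (4) of \eqref{Equ:ConstrainedGP} satisfies $|G(\boldsymbol{\mu})_{\boldsymbol{\beta}}|=|G(\boldsymbol{\mu})_{\boldsymbol{\beta}}^{+}-G(\boldsymbol{\mu})_{\boldsymbol{\beta}}^{-}|\le\max\{G(\boldsymbol{\mu})_{\boldsymbol{\beta}}^{+},G(\boldsymbol{\mu})_{\boldsymbol{\beta}}^{-}\}\le b_{\boldsymbol{\beta}}$, using $G(\boldsymbol{\mu})_{\boldsymbol{\beta}}^{\pm}\ge 0$; this is exactly constraint (3) of \eqref{Equ:Constrained}. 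Hence the feasible set of \eqref{Equ:ConstrainedGP} is contained in that of \eqref{Equ:Constrained}. On this common region the objective of \eqref{Equ:ConstrainedGP} is pointwise at least that of \eqref{Equ:Constrained}, because $g_{i,\boldsymbol{\alp}(0)}^{+}=\max\{g_{i,\boldsymbol{\alp}(0)},0\}\ge g_{i,\boldsymbol{\alp}(0)}$ and $\mu_i\ge 0$. Taking infima gives $\gamma\le\gamma_{\rm sonc}$, and combining with Theorem~\ref{Thm:GPOldConstrainedCase} yields the full chain.

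For the geometric-programming statement I would fix $\boldsymbol{\mu}\in(0,\infty)^s$ and rewrite \eqref{Equ:ConstrainedGP} in the shape of Definition~\ref{Def:GP}. First, on the feasible set one has $b_{\boldsymbol{\beta}}>0$ for each $\boldsymbol{\beta}\in\Delta(G)$: since $\boldsymbol{\beta}\in\Delta(G)$ there is an $i$ with $g_{i,\boldsymbol{\beta}}\neq 0$, so at least one of $G(\boldsymbol{\mu})_{\boldsymbol{\beta}}^{+},G(\boldsymbol{\mu})_{\boldsymbol{\beta}}^{-}$ is a nonempty sum of monomials in $\boldsymbol{\mu}$ with positive coefficients and hence strictly positive on $(0,\infty)^s$; thus all variables may be taken in $\R_{>0}$. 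The objective $p$ is a finite sum of monomials with positive coefficients: the terms $g_{i,\boldsymbol{\alp}(0)}^{+}\mu_i$, together with the terms $\lambda_0^{(\boldsymbol{\beta})}b_{\boldsymbol{\beta}}^{1/\lambda_0^{(\boldsymbol{\beta})}}\prod_{j\in{\rm nz}(\boldsymbol{\beta}),\,j\ge 1}(\lambda_j^{(\boldsymbol{\beta})}/a_{\boldsymbol{\beta},j})^{\lambda_j^{(\boldsymbol{\beta})}/\lambda_0^{(\boldsymbol{\beta})}}$, each a monomial in $b_{\boldsymbol{\beta}}$ and the $a_{\boldsymbol{\beta},j}$ with positive coefficient and real exponents; hence $p$ is a posynomial. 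Dividing constraint (2) by its left-hand side turns it into $b_{\boldsymbol{\beta}}\prod_{j\in{\rm nz}(\boldsymbol{\beta})}(\lambda_j^{(\boldsymbol{\beta})}/a_{\boldsymbol{\beta},j})^{\lambda_j^{(\boldsymbol{\beta})}}\le 1$, a monomial $\le 1$; dividing constraints (3) and (4) by $b_{\boldsymbol{\beta}}$ turns them into $G(\boldsymbol{\mu})_{\boldsymbol{\beta}}^{+}b_{\boldsymbol{\beta}}^{-1}\le 1$ and $G(\boldsymbol{\mu})_{\boldsymbol{\beta}}^{-}b_{\boldsymbol{\beta}}^{-1}\le 1$, posynomials $\le 1$ since the $G(\boldsymbol{\mu})_{\boldsymbol{\beta}}^{\pm}$ are posynomials in $\boldsymbol{\mu}$ (trivial constraints with $G(\boldsymbol{\mu})_{\boldsymbol{\beta}}^{\pm}\equiv 0$ being discarded).

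The only step that uses the hypothesis, and the one I expect to be the main obstacle, is constraint (1). Writing $G(\boldsymbol{\mu})_{\boldsymbol{\alp}(j)}=-\sum_{i=0}^s\mu_i g_{i,\boldsymbol{\alp}(j)}$, the assumption provides a single index $i(j)$ with $g_{i(j),\boldsymbol{\alp}(j)}<0$ and $g_{i,\boldsymbol{\alp}(j)}\ge 0$ for all $i\neq i(j)$. Thus $-\mu_{i(j)}g_{i(j),\boldsymbol{\alp}(j)}$ is the unique strictly positive monomial term of $G(\boldsymbol{\mu})_{\boldsymbol{\alp}(j)}$ --- strictly positive because $\mu_{i(j)}>0$ (here $\mu_0=1$ if $i(j)=0$, and $\boldsymbol{\mu}\in(0,\infty)^s$ otherwise) --- while every other term $-\mu_i g_{i,\boldsymbol{\alp}(j)}$ is $\le 0$. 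Moving those other terms to the left, constraint (1) reads $\sum_{\boldsymbol{\beta}\in\Delta(G)}a_{\boldsymbol{\beta},j}+\sum_{i\neq i(j)}\mu_i g_{i,\boldsymbol{\alp}(j)}\le-\mu_{i(j)}g_{i(j),\boldsymbol{\alp}(j)}$, whose left-hand side is a posynomial and whose right-hand side is a single positive monomial, so after division it becomes a posynomial $\le 1$. As there are no equality constraints, collecting these reformulations exhibits \eqref{Equ:ConstrainedGP} in the form \eqref{Equ:GP}. Were two or more $g_{i,\boldsymbol{\alp}(j)}$ strictly negative, the denominator would be a genuine posynomial and the quotient only a signomial, which matches precisely the weaker conclusion recorded in Theorem~\ref{Thm:GPOldConstrainedCase}; checking that "exactly one positive term'' is the right dividing line is the conceptual crux.
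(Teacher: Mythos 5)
Your proposal is correct and follows essentially the same route as the paper: rewrite constraint (1) by dividing by the unique strictly positive monomial $G(\boldsymbol{\mu})_{\boldsymbol{\alp}(j)}^+$, observe that the objective and the remaining constraints are posynomial, and obtain the inequality chain by comparing \eqref{Equ:ConstrainedGP} with \eqref{Equ:Constrained} (using $\max\{G(\boldsymbol{\mu})_{\boldsymbol{\beta}}^+,G(\boldsymbol{\mu})_{\boldsymbol{\beta}}^-\}\geq |G(\boldsymbol{\mu})_{\boldsymbol{\beta}}|$ and $g_{i,\boldsymbol{\alp}(0)}^+\geq g_{i,\boldsymbol{\alp}(0)}$) together with Theorem \ref{Thm:GPOldConstrainedCase}. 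Your explicit feasible-set containment argument yields the direction $\gamma\leq\gamma_{\rm sonc}$ in the form the theorem actually needs (the paper's proof writes $\gamma_{\rm sonc}\leq\gamma$, apparently a typo), and your additional checks that $b_{\boldsymbol{\beta}}>0$ on the feasible set and that vanishing terms or trivial constraints may be discarded are minor refinements of the same argument.
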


The typical choice for $\boldsymbol{\alp}(0)$ is the origin which yields a lower bound for $f$ to be nonnegative on $K$ with the inequality \eqref{Equ:fKStern}:

\begin{cor}
Let all assumptions be as in Theorem $\ref{Thm:ConstrainedGP}$. If $\boldsymbol{\alp}(0)$ is the origin, then we have
\begin{eqnarray*}
 f_{\mathbf{0}} - \gamma_{\rm sonc} \ \leq \ f_{\mathbf{0}} - \gamma \ \leq \ s(f,\mathbf{g}) \ \leq \ f^*_K.
\end{eqnarray*}
\end{cor}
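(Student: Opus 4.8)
The plan is to obtain the corollary as a direct specialization of Theorem~\ref{Thm:ConstrainedGP} combined with the inequality \eqref{Equ:fKStern} from Section~\ref{SubSec:PrelimConstraint}. First I would apply Theorem~\ref{Thm:ConstrainedGP} with the distinguished vertex chosen to be the origin, $\boldsymbol{\alp}(0) = \mathbf{0}$. Since the origin is, under \assum{}, an even lattice point, it is a legitimate choice for a vertex of $\New(G(\boldsymbol{\mu}))$ whenever $\mathbf{0} \in V(A)$, so all hypotheses of the theorem carry over unchanged. This already yields the first two inequalities of the claim, namely $f_{\mathbf{0}} - \gamma_{\rm sonc} \le f_{\mathbf{0}} - \gamma \le s(f,\mathbf{g})$.

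Second, I would invoke \eqref{Equ:fKStern}, that is, $s(f,\mathbf{g}) \le f^*_K$, which was derived in Section~\ref{SubSec:PrelimConstraint} precisely for the case $\boldsymbol{\alp}(0) = \mathbf{0}$. The underlying mechanism is that for $\boldsymbol{\alp}(0) = \mathbf{0}$ one has $\mathbf{x}^{\boldsymbol{\alp}(0)} \equiv 1$, so for every admissible $\boldsymbol{\mu} \in \R_{\geq 0}^s$ the SONC certificate gives $G(\boldsymbol{\mu})(\mathbf{x}) - G(\boldsymbol{\mu})_{\rm sonc} \ge 0$ on all of $\R^n$; restricting to $\mathbf{x} \in K$ and using $\mu_i \ge 0$ together with $g_i(\mathbf{x}) \ge 0$ forces $f(\mathbf{x}) \ge G(\boldsymbol{\mu})_{\rm sonc}$, and taking first the supremum over $\boldsymbol{\mu}$ and then the infimum over $K$ gives the bound. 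Concatenating this with the chain from the first step proves the corollary.

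I do not expect any genuine obstacle here, since both ingredients are already established in the excerpt; the only point that needs a moment's attention is the consistency of the distinguished vertex. The inequality $s(f,\mathbf{g}) \le f^*_K$ holds \emph{only} when $\boldsymbol{\alp}(0)$ is the origin --- for a general vertex $\boldsymbol{\alp}(0)$ the monomial $\mathbf{x}^{\boldsymbol{\alp}(0)}$ need not be bounded on $K$, so the relaxation $G(\boldsymbol{\mu})_{\rm sonc}$ no longer translates into a bound on $f^*_K$. This is exactly why the corollary singles out $\boldsymbol{\alp}(0) = \mathbf{0}$, and under that hypothesis the statement follows immediately.
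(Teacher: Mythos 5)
Your proposal is correct and matches the paper's (implicit) argument: the corollary is stated without proof precisely because it is the concatenation of Theorem \ref{Thm:ConstrainedGP} with the inequality \eqref{Equ:fKStern}, which is exactly what you do. Your added explanation of why $s(f,\mathbf{g}) \leq f_K^*$ requires $\boldsymbol{\alp}(0) = \mathbf{0}$ is accurate and consistent with the derivation in Section \ref{SubSec:PrelimConstraint}.
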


\begin{proof}(Theorem \ref{Thm:ConstrainedGP})
If we restrict ourselves to $\boldsymbol{\mu} \in (0,\infty)^s$, then all functions involved in \eqref{Equ:ConstrainedGP} depend on variables in $\R_{> 0}$.  By assumption every $G(\boldsymbol{\mu})_{\boldsymbol{\alp}(j)}$ has exactly one strictly positive term. Thus, we can express constraint (1) as
\begin{eqnarray*}
 \frac{\sum\limits_{\boldsymbol{\beta}\in\Delta(G)} a_{\boldsymbol{\beta},j} + G(\boldsymbol{\mu})_{\boldsymbol{\alp}(j)}^-}{G(\boldsymbol{\mu})_{\boldsymbol{\alp}(j)}^+} & \leq & 1,
\end{eqnarray*}
with $ G(\boldsymbol{\mu})_{\boldsymbol{\alp}(j)}^-$ and $G(\boldsymbol{\mu})_{\boldsymbol{\alp}(j)}^+$ defined analogously as in \eqref{Equ:Gplusminus}. Since $G(\boldsymbol{\mu})_{\boldsymbol{\alp}(j)}^+$ is a monomial the left hand side is a posynomial in $\boldsymbol{\mu}$ and $\mathbf{x}$. The constraints (2) -- (4) are posynomial constraints in the sense of Definition \ref{Def:GP} of a geometric program. The function $p$ is also a posynomial since all terms are nonnegative by construction and all exponents are rational. Moreover, every $b_{\boldsymbol{\beta}}$ in \eqref{Equ:ConstrainedGP} has to be greater or equal than the corresponding $b_{\boldsymbol{\beta}}$ in \eqref{Equ:Constrained} because $\max\{a,b\} \geq |a-b|$ for all $a,b \in \R\setminus\{0\}$. Since furthermore $g_{i,\boldsymbol{\alp}(0)}^+ \geq g_{i,\boldsymbol{\alp}(0)}$ it follows that $\gamma_{\rm sonc} \leq \gamma$ by the definitions of \eqref{Equ:ConstrainedGP} and \eqref{Equ:Constrained}. The last inequality follows from Theorem \ref{Thm:GPOldConstrainedCase}.
\end{proof}

One expects the programs \eqref{Equ:Constrained} and \eqref{Equ:ConstrainedGP} to have a similar optimal value if, for example, $g_{i,\boldsymbol{\alp}(0)}\geq 0$ for most $i=1,\ldots,s$ and if one $G(\boldsymbol{\mu})^+_{\boldsymbol{\beta}}, G(\boldsymbol{\mu})^-_{\boldsymbol{\beta}}$ is identically zero for most $\boldsymbol{\beta} \in \Delta(G)$. Note that one of $G(\boldsymbol{\mu})^+_{\boldsymbol{\beta}}, G(\boldsymbol{\mu})^-_{\boldsymbol{\beta}}$ is zero if and only if $\max\{G(\boldsymbol{\mu})^+_{\boldsymbol{\beta}},G(\boldsymbol{\mu})^-_{\boldsymbol{\beta}}\}=|G(\boldsymbol{\mu})^+_{\boldsymbol{\beta}}-G(\boldsymbol{\mu})^-_{\boldsymbol{\beta}}|=|G(\boldsymbol{\mu})_{\boldsymbol{\beta}}|$ if and only if the $g_{i,\boldsymbol{\beta}}$ are all $\geq 0$ or all $\leq 0$ for $i=0,\ldots,s$.\\

We give an example to demonstrate how a given constrained polynomial optimization problem can be translated into the geometric program \eqref{Equ:ConstrainedGP}. In Section \ref{Sec:ExamplesConstrained}, we provide several further examples including actual computations of infima using the GP-solver \textsc{CVX}.
	
\begin{example}
 Let $f= 1+2x^2y^4+\frac{1}{2}x^3y^2 $ and $g_1= \frac{1}{3} - x^6y^2$. From these two polynomials we obtain a function
\[
 G(\mu) \ = \ \left(1-\frac{1}{3}\mu\right)+2x^2y^4 + \mu x^6y^2 + \frac{1}{2} x^3y^2.
\]
For $G(\mu)$ to be an ST-polynomial, we have to choose $\mu \in (0,3)$. Here, the vertices of $\New(G(\mu))$ are $\boldsymbol{\alp}(0)=(0,0), \boldsymbol{\alp}(1)=(2,4), \boldsymbol{\alp}(2)=(6,2)$, and we have $\Delta(G)=\{\boldsymbol{\beta}\}=\{(3,2)\}$. Thus, we introduce $4$ variables $(a_{\boldsymbol{\beta},1},a_{\boldsymbol{\beta},2},b_{\boldsymbol{\beta}},\mu)$.
First, we compute the barycentric coordinates of $\boldsymbol{\beta}$ and get 
\[
 \lambda_0^{(\boldsymbol{\beta})} \ = \ \frac{3}{10}, \quad  \lambda_1^{(\boldsymbol{\beta})} \ = \ \frac{3}{10},\quad \lambda_2^{(\boldsymbol{\beta})} \ = \ \frac{2}{5} .
\]
We match the coefficients of $G(\mu)$ with the vertices $\boldsymbol{\alp}(j)$:
\begin{itemize}
 \item  $g_{1,\boldsymbol{\alp}(0)}^+=\max\{\frac{1}{3},0\}=\frac{1}{3}$ , 
 \item  $G(\mu)_{\boldsymbol{\alp}(1)}=2$, $G(\mu)_{\boldsymbol{\alp}(2)}=\mu$ , 
 \item  $G(\mu)_{\boldsymbol{\beta}}^+=\frac{1}{2}$, $G(\mu)_{\boldsymbol{\beta}}^-$ does not exist.
\end{itemize}
Hence, Program  \eqref{Equ:ConstrainedGP} is of the form:
\begin{eqnarray*}
& & \inf\left\{\frac{1}{3} \mu + \frac{3}{10} \cdot b_{\boldsymbol{\beta}}^{\frac{10}{3}} \cdot \left(\frac{3}{10}\right)^1 \cdot \left(\frac{2}{5}\right)^{\frac{4}{3}} \cdot (a_{\boldsymbol{\beta},1})^{-1} \cdot (a_{\boldsymbol{\beta},2})^{-\frac{4}{3}} \right\}
\end{eqnarray*}
such that:
\begin{enumerate}
 \item $a_{\boldsymbol{\beta},1} \leq 2 ,\; a_{\boldsymbol{\beta},2} \leq \mu$.
 \item The second constraint does not appear, because we do not have $\lambda_0^{(\boldsymbol{\beta})}=0$.
 \item $\frac{1}{2} \leq b_{\boldsymbol{\beta}}$.
 \item The fourth constraint does not appear, because we do not have a $G(\mu)_{\boldsymbol{\beta}}^-$.
\end{enumerate}
\endexa
\end{example}

\bigskip

In what follows, we extend Theorem \ref{Thm:GPOldConstrainedCase} by reformulating the program \eqref{Equ:ConstrainedGP} such that it is \textit{always} applicable. On the one hand, the new program is only a signomial program instead of a geometric program in general.
 On the other hand, the reformulated program covers the missing cases of Theorem \ref{Thm:ConstrainedGP} and also yields better bounds than the corresponding geometric program \eqref{Equ:ConstrainedGP} in general. Let 
\begin{eqnarray*}
 \struc{q(\boldsymbol{\mu},\{(\mathbf{a}_{\boldsymbol{\beta}},c_{\boldsymbol{\beta}}) \ : \ \boldsymbol{\beta} \in \Delta(G)\})} & = &
 \sum_{i=1}^{s} \mu_ig_{i,\boldsymbol{\alp}(0)} + 
 \sum_{\substack{\boldsymbol{\beta} \in \Delta(G) \\ \lambda_0^{(\boldsymbol{\beta})} \neq 0}}^{}\lambda_0^{(\boldsymbol{\beta})} \cdot c_{\boldsymbol{\beta}}^{\frac{1}{\lambda_0^{(\boldsymbol{\beta})}}}\cdot \prod_{\substack{j\in {\rm nz}(\boldsymbol{\beta}) \\ j\geq 1}} \left(\frac{\lambda_j^{(\boldsymbol{\beta})}}{a_{\boldsymbol{\beta},j}}\right)^{\frac{\lambda_j^{(\boldsymbol{\beta})}}{\lambda_0^{(\boldsymbol{\beta})}}}.
\end{eqnarray*}

We consider the following program.

\begin{eqnarray}
& & \begin{cases}
\text{minimize} &
q(\boldsymbol{\mu},\{(\mathbf{a}_{\boldsymbol{\beta}},c_{\boldsymbol{\beta})} \ : \ \boldsymbol{\beta} \in \Delta(G)\})
\text { over the subset of } R
\\
& \\
\text{defined by:} & 
\begin{array}{cl}
 (1) & \sum\limits_{\boldsymbol{\beta}\in\Delta(G)} a_{\boldsymbol{\beta},j} \, \leq \, G(\boldsymbol{\mu})_{\boldsymbol{\alp}(j)} \ \text{ for all } \ 1 \leq j \leq r,  \\
 (2) & \prod\limits_{j\in {\rm nz}(\boldsymbol{\beta})} \left(\frac{a_{\boldsymbol{\beta},j}}{\lambda_j^{(\boldsymbol{\beta})}}\right)^{\lambda_j^{(\boldsymbol{\beta})}} \, \geq \,  c_{\boldsymbol{\beta}}
 \ \text{ for all } \ \boldsymbol{\beta} \in \Delta(G) \ \text{with} \ \lambda_0^{(\boldsymbol{\beta})}=0, \\
(3) & G(\boldsymbol{\mu})_{\boldsymbol{\beta}}^+ - G(\boldsymbol{\mu})_{\boldsymbol{\beta}}^- \leq c_{\boldsymbol{\beta}} \ \text{ for all } \ \boldsymbol{\beta} \in \Delta(G), \ \text{ and } \\
(4) & G(\boldsymbol{\mu})_{\boldsymbol{\beta}}^- - G(\boldsymbol{\mu})_{\boldsymbol{\beta}}^+ \leq c_{\boldsymbol{\beta}} \ \text{ for all } \ \boldsymbol{\beta} \in \Delta(G).  \\
\end{array}
\end{cases}
\label{Equ:ConstrainedSNP}
\end{eqnarray}

The key difference between this program and \eqref{Equ:ConstrainedGP} is that 
$$c_{\boldsymbol{\beta}} \ \geq \ \max\{G(\boldsymbol{\mu})_{\boldsymbol{\beta}}^+ - G(\boldsymbol{\mu})_{\boldsymbol{\beta}}^-, G(\boldsymbol{\mu})_{\boldsymbol{\beta}}^- - G(\boldsymbol{\mu})_{\boldsymbol{\beta}}^+\} \ = \ |G(\boldsymbol{\mu})_{\boldsymbol{\beta}}|.$$
We obtain the following statement.

\begin{thm}
The optimization problem \eqref{Equ:ConstrainedSNP} restricted to $\boldsymbol{\mu} \in (0,\infty)^s$ is a signomial program. Assume that $\struc{\gamma_{\rm snp}}$ denotes the optimal value of \eqref{Equ:ConstrainedSNP} and $\gamma_{\rm sonc},\gamma$ denote the optimal values of \eqref{Equ:ConstrainedGP} and \eqref{Equ:Constrained} as before. Then we have
\begin{eqnarray*}
 f_{\boldsymbol{\alp}(0)} - \gamma_{\rm sonc} \ \leq \ f_{\boldsymbol{\alp}(0)} - \gamma_{\rm snp} \ \leq \ f_{\boldsymbol{\alp}(0)} - \gamma \ \leq \ s(f,\mathbf{g}).
\end{eqnarray*}
Particularly, we have $\gamma_{\rm snp} = \gamma$ if the program \eqref{Equ:Constrained} attains its optimal value for $\boldsymbol{\mu} \in (0,\infty)^s$.
\label{Thm:ConstrainedSP}
\end{thm}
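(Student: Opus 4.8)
The plan is to compare the three programs \eqref{Equ:Constrained}, \eqref{Equ:ConstrainedGP}, and \eqref{Equ:ConstrainedSNP} constraint by constraint, using the observation already recorded after the statement of \eqref{Equ:ConstrainedSNP}, namely that for all real $a,b$ one has
\[
 \max\{a-b,\,b-a\} \ = \ |a-b| \ \geq \ |a| - |b| \ \text{ and } \ \max\{a,b\} \ \geq \ |a-b|.
\]
First I would verify that \eqref{Equ:ConstrainedSNP} restricted to $\boldsymbol{\mu}\in(0,\infty)^s$ is a signomial program: after substituting $\boldsymbol{\mu}\in\R_{>0}^s$ every variable ranges over the positive reals, constraint (1) is a posynomial inequality (rewrite it as in the proof of Theorem \ref{Thm:ConstrainedGP} by dividing through the monomial $G(\boldsymbol{\mu})_{\boldsymbol{\alp}(j)}^+$; here the hypothesis that this form has one positive term is \emph{not} needed, so it becomes a genuine signomial inequality when it does not), constraint (2) is a monomial-in-$\mathbf{a}$ inequality, and constraints (3),(4) are affine in $\boldsymbol{\mu}$ and $c_{\boldsymbol{\beta}}$, hence signomial; the objective $q$ is a posynomial in $c_{\boldsymbol{\beta}},\mathbf{a}_{\boldsymbol{\beta}},\boldsymbol{\mu}$ with rational exponents exactly as in \eqref{Equ:Constrained}. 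So \eqref{Equ:ConstrainedSNP} is a signomial program.

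Next I would establish the chain of inequalities. For the middle inequality $f_{\boldsymbol{\alp}(0)}-\gamma_{\rm snp}\le f_{\boldsymbol{\alp}(0)}-\gamma$, i.e. $\gamma_{\rm snp}\ge\gamma$: given any feasible point of \eqref{Equ:Constrained} with value $\gamma$, one has $b_{\boldsymbol{\beta}}\ge|G(\boldsymbol{\mu})_{\boldsymbol{\beta}}|=\max\{G(\boldsymbol{\mu})_{\boldsymbol{\beta}}^+-G(\boldsymbol{\mu})_{\boldsymbol{\beta}}^-,\,G(\boldsymbol{\mu})_{\boldsymbol{\beta}}^--G(\boldsymbol{\mu})_{\boldsymbol{\beta}}^+\}$ for all $\boldsymbol{\beta}$ with $\lambda_0^{(\boldsymbol{\beta})}\ne 0$, and for $\boldsymbol{\beta}$ with $\lambda_0^{(\boldsymbol{\beta})}=0$ constraint (3) of \eqref{Equ:Constrained} reads $|G(\boldsymbol{\mu})_{\boldsymbol{\beta}}|\le b_{\boldsymbol{\beta}}$ as well (matching the convention in \eqref{Equ:Constrained}); hence setting $c_{\boldsymbol{\beta}}=b_{\boldsymbol{\beta}}$ gives a feasible point of \eqref{Equ:ConstrainedSNP}, and since $q$ uses the honest coefficients $g_{i,\boldsymbol{\alp}(0)}$ (not $g_{i,\boldsymbol{\alp}(0)}^+$) its value is exactly $\gamma$. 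Thus $\gamma_{\rm snp}\ge\gamma$. Conversely, any feasible point of \eqref{Equ:ConstrainedSNP} is feasible for \eqref{Equ:Constrained} after setting $b_{\boldsymbol{\beta}}=c_{\boldsymbol{\beta}}$ — constraints (3),(4) of \eqref{Equ:ConstrainedSNP} together say $c_{\boldsymbol{\beta}}\ge|G(\boldsymbol{\mu})_{\boldsymbol{\beta}}|$, which is exactly constraint (3) of \eqref{Equ:Constrained} for the $\lambda_0\ne 0$ case and dominates the needed bound in the $\lambda_0=0$ case — with the same objective value, so $\gamma_{\rm snp}\le\gamma$; hence $\gamma_{\rm snp}=\gamma$ on $\boldsymbol{\mu}\in(0,\infty)^s$. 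The last clause then follows: if \eqref{Equ:Constrained} attains its optimum at some $\boldsymbol{\mu}\in(0,\infty)^s$, that optimum is attained by the restricted program, so $\gamma_{\rm snp}=\gamma$ unconditionally. For the leftmost inequality $f_{\boldsymbol{\alp}(0)}-\gamma_{\rm sonc}\le f_{\boldsymbol{\alp}(0)}-\gamma_{\rm snp}$: any feasible point of \eqref{Equ:ConstrainedGP} gives a feasible point of \eqref{Equ:ConstrainedSNP} with $c_{\boldsymbol{\beta}}=b_{\boldsymbol{\beta}}$, because $b_{\boldsymbol{\beta}}\ge G(\boldsymbol{\mu})_{\boldsymbol{\beta}}^+$ and $b_{\boldsymbol{\beta}}\ge G(\boldsymbol{\mu})_{\boldsymbol{\beta}}^-$ imply $b_{\boldsymbol{\beta}}\ge G(\boldsymbol{\mu})_{\boldsymbol{\beta}}^+-G(\boldsymbol{\mu})_{\boldsymbol{\beta}}^-$ and $b_{\boldsymbol{\beta}}\ge G(\boldsymbol{\mu})_{\boldsymbol{\beta}}^--G(\boldsymbol{\mu})_{\boldsymbol{\beta}}^+$; and $q\le p$ at this point since $g_{i,\boldsymbol{\alp}(0)}\le g_{i,\boldsymbol{\alp}(0)}^+$. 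Hence $\gamma_{\rm sonc}\le\gamma_{\rm snp}$. Finally $f_{\boldsymbol{\alp}(0)}-\gamma\le s(f,\mathbf{g})$ is Theorem \ref{Thm:GPOldConstrainedCase}, which I may assume.

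The main obstacle is bookkeeping rather than depth: one must be careful that the objectives $p$ and $q$ differ precisely in the $g_{i,\boldsymbol{\alp}(0)}^+$ versus $g_{i,\boldsymbol{\alp}(0)}$ term and in nothing else, and that the feasible-set translations $b_{\boldsymbol{\beta}}\leftrightarrow c_{\boldsymbol{\beta}}$ preserve \emph{all} constraints in both directions — in particular that constraint (3) of \eqref{Equ:Constrained} (stated there as $|G(\boldsymbol{\mu})_{\boldsymbol{\beta}}|\le b_{\boldsymbol{\beta}}$ for $\lambda_0^{(\boldsymbol{\beta})}\ne 0$) is equivalent to constraints (3)–(4) of \eqref{Equ:ConstrainedSNP} restricted to those $\boldsymbol{\beta}$, while for $\boldsymbol{\beta}$ with $\lambda_0^{(\boldsymbol{\beta})}=0$ constraints (3)–(4) of \eqref{Equ:ConstrainedSNP} are simply additional (harmless) constraints that any feasible point of \eqref{Equ:Constrained} satisfies automatically once we also know $b_{\boldsymbol{\beta}}\ge|G(\boldsymbol{\mu})_{\boldsymbol{\beta}}|$, which is where one must check the conventions of \eqref{Equ:Constrained} carefully. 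Once these identifications are pinned down the result is immediate.
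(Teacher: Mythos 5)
Your overall route is the same as the paper's: the printed proof of Theorem \ref{Thm:ConstrainedSP} simply declares it analogous to the proof of Theorem \ref{Thm:ConstrainedGP}, i.e.\ a constraint-by-constraint comparison of \eqref{Equ:Constrained}, \eqref{Equ:ConstrainedGP} and \eqref{Equ:ConstrainedSNP} using $\max\{G(\boldsymbol{\mu})_{\boldsymbol{\beta}}^+,G(\boldsymbol{\mu})_{\boldsymbol{\beta}}^-\}\geq |G(\boldsymbol{\mu})_{\boldsymbol{\beta}}|$ and $g_{i,\boldsymbol{\alp}(0)}^+\geq g_{i,\boldsymbol{\alp}(0)}$ together with the definition of a signomial program; you supply exactly these ingredients, in more detail than the paper (including the attainment argument for the final clause, and the sensible reading that the bound $|G(\boldsymbol{\mu})_{\boldsymbol{\beta}}|\leq b_{\boldsymbol{\beta}}$ in \eqref{Equ:Constrained} is meant to be in force for all $\boldsymbol{\beta}$, which is indeed the convention needed for the comparison).

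The one concrete problem is that your deductions repeatedly attach the wrong inequality to the feasibility maps you build. Mapping a feasible point of \eqref{Equ:Constrained} to an SNP-feasible point of equal objective value shows $\gamma_{\rm snp}\leq\gamma$ (you have exhibited an SNP-feasible point of value $\gamma$), not $\gamma_{\rm snp}\geq\gamma$; it is the converse map --- every feasible point of \eqref{Equ:ConstrainedSNP}, which has $\boldsymbol{\mu}\in(0,\infty)^s$, is feasible for \eqref{Equ:Constrained} with the same value --- that gives $\gamma_{\rm snp}\geq\gamma$, i.e.\ $f_{\boldsymbol{\alp}(0)}-\gamma_{\rm snp}\leq f_{\boldsymbol{\alp}(0)}-\gamma$. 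Similarly, your map from \eqref{Equ:ConstrainedGP} into \eqref{Equ:ConstrainedSNP} together with $q\leq p$ proves $\gamma_{\rm snp}\leq\gamma_{\rm sonc}$, which is precisely the leftmost inequality of the theorem, yet you conclude ``$\gamma_{\rm sonc}\leq\gamma_{\rm snp}$'', the reverse statement (which is not true in general). Since both correspondences are actually established, the underlying argument is sufficient and the equality clause under attainment at $\boldsymbol{\mu}\in(0,\infty)^s$ goes through as you describe, but the stated ``hence'' conclusions must have their directions corrected before this is a proof. A further small slip: $q$ is a signomial rather than a posynomial, since the coefficients $g_{i,\boldsymbol{\alp}(0)}$ may be negative; this is harmless because only a signomial program is claimed.
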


\begin{proof}
The proof is analogue to the proof of Theorem \ref{Thm:ConstrainedGP}. The only difference is that certain terms can have a negative sign now and hence posynomials then become signomials. The statement follows with the definition of a signomial program; see Section \ref{SubSec:PrelimGP}.
\end{proof}

Finally, we show that if we strengthen the assumptions in Theorem \ref{Thm:ConstrainedGP}, then, the output $f_{\boldsymbol{\alp}(0)} - \gamma_{\rm sonc}$ of \eqref{Equ:ConstrainedGP} equals the output $f_{\boldsymbol{\alp}(0)} - \gamma$ of \eqref{Equ:Constrained} and particularly the bound $s(f,\mathbf{g})$.

\begin{thm}
 Assume that for every $1 \leq j \leq r$ the form $G(\boldsymbol{\mu})_{\boldsymbol{\alp}(j)}  =  - \sum_{i = 0}^s \mu_i \cdot g_{i,\boldsymbol{\alp}(j)}$ has exactly one strictly positive term. Furthermore, assume that $g_{i,\boldsymbol{\alp}(0)}\geq 0$ for all $i=1,\ldots,s$, and that $\Delta(A) \cap A_i \cap A_l = \emptyset$ for all $0\leq i<l\leq s$. Let \struc{$\gamma$} be the optimal value of the program \eqref{Equ:Constrained}. If the optimal value $s(f,\mathbf{g}) = \sup\{G(\boldsymbol{\mu})_{\rm sonc} : \boldsymbol{\mu} \in \R^{s}_{\geq 0}\}$ is attained for some $\boldsymbol{\mu} \in (0,\infty)^s$, then $f_{\boldsymbol{\alp}(0)} - \gamma_{\rm sonc} = f_{\boldsymbol{\alp}(0)} - \gamma = s(f,\mathbf{g})$, where, as before, $\struc{\gamma_{\rm sonc}}$ denotes the optimal value of \eqref{Equ:ConstrainedGP}.
 \label{Thm:ConstrainedGPequality}
\end{thm}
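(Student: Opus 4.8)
The plan is to establish the two equalities in turn: the outer one, $f_{\boldsymbol{\alp}(0)} - \gamma = s(f,\mathbf{g})$, using only that the supremum defining $s(f,\mathbf{g})$ is attained in the open orthant, and then $\gamma_{\rm sonc} = \gamma$, which is where the hypotheses $g_{i,\boldsymbol{\alp}(0)}\geq 0$ and $\Delta(A)\cap A_i\cap A_l=\emptyset$ do their work. From Theorem \ref{Thm:ConstrainedGP} we already have $f_{\boldsymbol{\alp}(0)} - \gamma_{\rm sonc} \leq f_{\boldsymbol{\alp}(0)} - \gamma \leq s(f,\mathbf{g})$, so it suffices to prove $f_{\boldsymbol{\alp}(0)} - \gamma \geq s(f,\mathbf{g})$ and $\gamma_{\rm sonc} \geq \gamma$.

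For the outer equality I would fix a maximizer $\boldsymbol{\mu}^* \in (0,\infty)^s$ with $s(f,\mathbf{g}) = G(\boldsymbol{\mu}^*)_{\rm sonc}$; since this value is finite, $G(\boldsymbol{\mu}^*)$ is a genuine ST-polynomial, so by assumption \assum{} all its vertex coefficients $G(\boldsymbol{\mu}^*)_{\boldsymbol{\alp}(j)}$ are strictly positive and $G(\boldsymbol{\mu}^*)_{\rm sonc}$ is the output of the geometric program of Corollary \ref{Cor:GP} applied to $G(\boldsymbol{\mu}^*)$, with $G(\boldsymbol{\mu}^*)_{\rm sonc} = G(\boldsymbol{\mu}^*)_{\boldsymbol{\alp}(0)} - m^*$. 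Then I would restrict the program \eqref{Equ:Constrained} to the slice $\boldsymbol{\mu} = \boldsymbol{\mu}^*$ and minimize over the remaining variables $a_{\boldsymbol{\beta},j}, b_{\boldsymbol{\beta}}$: each $b_{\boldsymbol{\beta}}$ enters the objective $p$ monotonically and is bounded below only by $|G(\boldsymbol{\mu}^*)_{\boldsymbol{\beta}}|$, so at the optimum one takes $b_{\boldsymbol{\beta}} = |G(\boldsymbol{\mu}^*)_{\boldsymbol{\beta}}|$; after this substitution the slice is exactly the geometric program of Corollary \ref{Cor:GP} for $G(\boldsymbol{\mu}^*)$ (constraints (1), (2) match after dividing by $G(\boldsymbol{\mu}^*)_{\boldsymbol{\alp}(j)}>0$), shifted by the constant $\sum_{i=1}^s \mu_i^* g_{i,\boldsymbol{\alp}(0)}$. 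Using $\mu_0 = 1$, $g_0 = -f$ to write $G(\boldsymbol{\mu}^*)_{\boldsymbol{\alp}(0)} = f_{\boldsymbol{\alp}(0)} - \sum_{i=1}^s \mu_i^* g_{i,\boldsymbol{\alp}(0)}$, the optimal value of the slice equals $\sum_{i=1}^s \mu_i^* g_{i,\boldsymbol{\alp}(0)} + m^* = f_{\boldsymbol{\alp}(0)} - G(\boldsymbol{\mu}^*)_{\rm sonc} = f_{\boldsymbol{\alp}(0)} - s(f,\mathbf{g})$. Since $\gamma$ is the infimum of $p$ over all of $R$, in particular over all $\boldsymbol{\mu}$, this gives $\gamma \leq f_{\boldsymbol{\alp}(0)} - s(f,\mathbf{g})$, hence the claimed equality.

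For $\gamma_{\rm sonc} = \gamma$ I would show that under the two structural hypotheses the programs \eqref{Equ:Constrained} and \eqref{Equ:ConstrainedGP} are literally the same optimization problem. The hypothesis $\Delta(A) \cap A_i \cap A_l = \emptyset$ for $0 \leq i < l \leq s$ forces every $\boldsymbol{\beta} \in \Delta(G)$ to lie in the support of a unique $g_i$, so that $G(\boldsymbol{\mu})_{\boldsymbol{\beta}} = -\mu_i g_{i,\boldsymbol{\beta}}$ is a single-term linear form in $\boldsymbol{\mu}$; consequently one of the forms $G(\boldsymbol{\mu})_{\boldsymbol{\beta}}^+, G(\boldsymbol{\mu})_{\boldsymbol{\beta}}^-$ from \eqref{Equ:Gplusminus} vanishes identically, $\max\{G(\boldsymbol{\mu})_{\boldsymbol{\beta}}^+, G(\boldsymbol{\mu})_{\boldsymbol{\beta}}^-\} = |G(\boldsymbol{\mu})_{\boldsymbol{\beta}}|$, and the constraint pair (3),(4) of \eqref{Equ:ConstrainedGP} collapses to constraint (3) of \eqref{Equ:Constrained}. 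The hypothesis $g_{i,\boldsymbol{\alp}(0)} \geq 0$ for $1 \leq i \leq s$ gives $g_{i,\boldsymbol{\alp}(0)}^+ = g_{i,\boldsymbol{\alp}(0)}$, so the objectives $p$ coincide; constraints (1) and (2) are identical in the two programs by definition, and the single-positive-term hypothesis (inherited from Theorem \ref{Thm:ConstrainedGP}) merely guarantees that this common program really is a geometric program. Hence $\gamma_{\rm sonc} = \gamma$, and combined with the outer equality we obtain $f_{\boldsymbol{\alp}(0)} - \gamma_{\rm sonc} = f_{\boldsymbol{\alp}(0)} - \gamma = s(f,\mathbf{g})$.

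The step I expect to be the main obstacle is the identification in the second paragraph of the $\boldsymbol{\mu}=\boldsymbol{\mu}^*$ slice of \eqref{Equ:Constrained} with the geometric program of Corollary \ref{Cor:GP}: one must check carefully that the auxiliary variables $b_{\boldsymbol{\beta}}$ can be eliminated both for the exponents with $\lambda_0^{(\boldsymbol{\beta})}\neq 0$ (where $b_{\boldsymbol{\beta}}$ sits in the objective) and for those with $\lambda_0^{(\boldsymbol{\beta})}=0$ (where $b_{\boldsymbol{\beta}}$ appears only in constraints (2),(3), and eliminating it is exactly what reproduces constraint (2) of Corollary \ref{Cor:GP}), and that no feasibility is lost because $G(\boldsymbol{\mu}^*)$ being an ST-polynomial forces every $G(\boldsymbol{\mu}^*)_{\boldsymbol{\alp}(j)}$ to be positive. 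The remaining bookkeeping — the constant shift $\sum_{i=1}^s\mu_i^* g_{i,\boldsymbol{\alp}(0)}$ and the sign conventions $g_0=-f$, $\mu_0=1$ — is routine.
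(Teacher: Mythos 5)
Your proposal is correct and uses essentially the same ingredients as the paper's proof: the structural hypotheses ($\Delta(A)\cap A_i\cap A_l=\emptyset$ forcing $\max\{G(\boldsymbol{\mu})_{\boldsymbol{\beta}}^+,G(\boldsymbol{\mu})_{\boldsymbol{\beta}}^-\}=|G(\boldsymbol{\mu})_{\boldsymbol{\beta}}|$ and $g_{i,\boldsymbol{\alp}(0)}\geq 0$ forcing $g_{i,\boldsymbol{\alp}(0)}^+=g_{i,\boldsymbol{\alp}(0)}$) to identify \eqref{Equ:Constrained} with \eqref{Equ:ConstrainedGP}, plus the attained maximizer $\boldsymbol{\mu}^*$ together with Corollary \ref{Cor:GP} to get the reverse inequality against $s(f,\mathbf{g})$. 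The only difference is organizational: the paper first merges the two programs and then runs the Corollary \ref{Cor:GP} feasible-point argument inside \eqref{Equ:ConstrainedGP}, while you run it on the $\boldsymbol{\mu}=\boldsymbol{\mu}^*$ slice of \eqref{Equ:Constrained} first, which amounts to the same computation.
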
 

Note that the condition $\Delta(A) \cap A_i \cap A_l=\emptyset$ is satisfied if the supports of $g_i$ and $g_l$ differ in all elements that are not vertices of $\New(G(\boldsymbol{\mu}))$.

\begin{proof}
The assumption $\Delta(A) \cap A_i \cap A_l=\emptyset$ for all $0\leq i<l\leq s$ implies for every $\boldsymbol{\beta} \in \Delta(G)$ that $G(\boldsymbol{\mu})_{\boldsymbol{\beta}}=-\sum_{i=0}^s \mu_i \cdot g_{i,\boldsymbol{\beta}}=-\mu_k\cdot g_{k,\boldsymbol{\beta}}$, for some $k \in [0,s]$. Therefore, we have for every $\boldsymbol{\beta} \in \Delta(G)$ that
\begin{eqnarray*}
 \max\{G(\boldsymbol{\mu})^+_{\boldsymbol{\beta}},G(\boldsymbol{\mu})^-_{\boldsymbol{\beta}}\} \ = \ |\mu_k\cdot g_{k,\boldsymbol{\beta}}| \ = \ |G(\boldsymbol{\mu})_{\boldsymbol{\beta}}|.
\end{eqnarray*}
Furthermore, we have $g_{i,\boldsymbol{\alp}(0)} \geq 0$ for all $i=1,\ldots,s$ by assumption and thus we obtain $\sum_{i = 1}^s \mu_i g_{i,\boldsymbol{\alp}(0)} = \sum_{i = 1}^s \mu_i g_{i,\boldsymbol{\alp}(0)}^+$. Hence, the two programs \eqref{Equ:Constrained} and \eqref{Equ:ConstrainedGP} coincide.

By assumption, every $G(\boldsymbol{\mu})_{\boldsymbol{\alp}(j)}$ consists of exactly one positive term. Therefore, \eqref{Equ:ConstrainedGP} is a GP by Theorem \ref{Thm:ConstrainedGP}. Considering Theorem \ref{Thm:ConstrainedGP} it suffices to show the inequality $f_{\boldsymbol{\alp}(0)}- \gamma_{\rm sonc} \geq s(f,\mathbf{g})$ for $f_{\boldsymbol{\alp}(0)} - \gamma_{\rm sonc} = f_{\boldsymbol{\alp}(0)} - \gamma = s(f,\mathbf{g})$ to hold.
Let $\boldsymbol{\mu}^* \in (0,\infty)^s$ be such that $G(\boldsymbol{\mu}^*)_{\rm sonc} = s(f,\mathbf{g})$. By Corollary \ref{Cor:GP} $G(\boldsymbol{\mu}^*)_{\rm sonc}$ is given by a feasible point $(a_{\boldsymbol{\beta},1},\ldots,a_{\boldsymbol{\beta},r})$ of the program 

 \begin{eqnarray*}
 & & \begin{cases}
\text{minimize} &
 \sum\limits_{\substack{\boldsymbol{\beta} \in \Delta(G) \\ \lambda_0^{(\boldsymbol{\beta})} \neq 0}}^{}\lambda_0^{(\boldsymbol{\beta})} \cdot |\mu^*_k\cdot g_{k,\boldsymbol{\beta}}|^{\frac{1}{\lambda_0^{(\boldsymbol{\beta})}}}\cdot \prod\limits_{\substack{j\in {\rm nz}(\boldsymbol{\beta}) \\ j\geq 1}} \left(\frac{\lambda_j^{(\boldsymbol{\beta})}}{a_{\boldsymbol{\beta},j}}\right)^{\frac{\lambda_j^{(\boldsymbol{\beta})}}{\lambda_0^{(\boldsymbol{\beta})}}}
\text{ over the subset } R' \text{ of } R
 \\
 & \\
 \text{defined by:} & 
 \begin{array}{cl}
  (1) & \sum\limits_{\boldsymbol{\beta} \in\Delta(G)} a_{\boldsymbol{\beta},j} \, \leq \, G(\boldsymbol{\mu}^*)_{\boldsymbol{\alp}(j)} \ \text{ for all } \ 1 \leq j \leq r, \text{ and } \\
 (2) & \prod\limits_{\substack{j\in {\rm nz}(\boldsymbol{\beta})}} \left(\frac{a_{\boldsymbol{\beta},j}}{\lambda_j^{(\boldsymbol{\beta})}}\right)^{\lambda_j^{(\boldsymbol{\beta})}} \,\geq \, |\mu^*_k\cdot g_{k,\boldsymbol{\beta}}|\ \text{ for all } \ \boldsymbol{\beta} \in \Delta(G) \ \text{with} \ \lambda_0^{(\boldsymbol{\beta})}=0.  \\ 
 \end{array}
 \end{cases}
\end{eqnarray*}
 
Then every $(a_{\boldsymbol{\beta},1},\ldots,a_{\boldsymbol{\beta},r},b_{\boldsymbol{\beta}},\boldsymbol{\mu}^*)$ with $b_{\boldsymbol{\beta}}\geq |\mu^*_k\cdot g_{k,\boldsymbol{\beta}}|$ for all $\boldsymbol{\beta} \in \Delta(G)$ is a feasible point of \eqref{Equ:ConstrainedGP}. Furthermore, 

 \begin{multline*}
f_{\boldsymbol{\alp}(0)}-\sum_{i=1}^s \mu^*_i g_{i,\boldsymbol{\alp}(0)}^+ - \sum_{\substack{\boldsymbol{\beta} \in \Delta(G) \\ \lambda_0^{(\boldsymbol{\beta})} \neq 0}}^{}\lambda_0^{(\boldsymbol{\beta})} \cdot b_{\boldsymbol{\beta}}^{\frac{1}{\lambda_0^{(\boldsymbol{\beta})}}}\cdot \prod_{\substack{j\in {\rm nz}(\boldsymbol{\beta}) \\ j\geq 1}} \left(\frac{\lambda_j^{(\boldsymbol{\beta})}}{a_{\boldsymbol{\beta},j}}\right)^{\frac{\lambda_j^{(\boldsymbol{\beta})}}{\lambda_0^{(\boldsymbol{\beta})}}}\\ 
= G(\boldsymbol{\mu}^*)(0) - \sum_{\substack{\boldsymbol{\beta} \in \Delta(G) \\ \lambda_0^{(\boldsymbol{\beta})} \neq 0}}^{}\lambda_0^{(\boldsymbol{\beta})} \cdot b_{\boldsymbol{\beta}}^{\frac{1}{\lambda_0^{(\boldsymbol{\beta})}}}\cdot \prod_{\substack{j\in {\rm nz}(\boldsymbol{\beta}) \\ j\geq 1}} \left(\frac{\lambda_j^{(\boldsymbol{\beta})}}{a_{\boldsymbol{\beta},j}}\right)^{\frac{\lambda_j^{(\boldsymbol{\beta})}}{\lambda_0^{(\boldsymbol{\beta})}}}\\ 
\geq G(\boldsymbol{\mu}^*)(0) - \sum_{\substack{\boldsymbol{\beta} \in \Delta(G) \\ \lambda_0^{(\boldsymbol{\beta})} \neq 0}}^{}\lambda_0^{(\boldsymbol{\beta})} \cdot |\mu^*_k\cdot g_{k,\boldsymbol{\beta}}|^{\frac{1}{\lambda_0^{(\boldsymbol{\beta})}}}\cdot \prod_{\substack{j\in {\rm nz}(\boldsymbol{\beta}) \\ j\geq 1}} \left(\frac{\lambda_j^{(\boldsymbol{\beta})}}{a_{\boldsymbol{\beta},j}}\right)^{\frac{\lambda_j^{(\boldsymbol{\beta})}}{\lambda_0^{(\boldsymbol{\beta})}}}.
\end{multline*}

Hence, $f_{\boldsymbol{\alp}(0)} - \gamma_{\rm sonc}\geq G(\boldsymbol{\mu}^*)_{\rm sonc} = s(f,\mathbf{g})$.

\end{proof}

\section{Examples for Constrained Optimization via Geometric Programming and a Comparison to Lasserre Relaxations}
\label{Sec:ExamplesConstrained}

We consider constrained polynomial optimization problems of the form
\begin{eqnarray*}
f_K^* \ = \ \inf_{\mathbf{x} \in K}^{}f(\mathbf{x}),
\end{eqnarray*}
where $K$ is a basic closed semialgebraic set defined by $g_1,\dots,g_s \geq 0$. One of the main results in \cite{Iliman:deWolff:GP} is the observation that lower bounds for global optimization problems arising from SONCs via GP can not only be computed faster, but also be better than the bounds obtained by SOS via SDP. Here, we show that competitive bounds arising from SONC via GP can also be obtained for constrained problems. Particularly, if $2d$ is the maximal total degree of $f$ and $g_1,\ldots,g_s$, then the bound given by the $d$-th Lasserre relaxation is not necessarily as good as our optimal solution, which is in contrast to the bounds obtained by Ghasemi and Marshall; see Example \ref{Exa:Constrained:GhasemiMarshallCase} for further details. Moreover, we provide runtimes for rescaled version of our examples demonstrating that the runtime of the GP approach is not sensitive to increasing the degree of a given problem. This has been observed by several authors in the past, e.g. \cite{Boyd:Kim:Vandenberghe:TutorialOnGP,Ghasemi:Marshall:GP, Ghasemi:Marshall:GP:Semialgebraic}, and is in contrast to the runtime of SDPs.\\

Let \struc{$\Sigma_{}^{}\mathbb R[\mathbf{x}]^2$} denote the set of \struc{\textit{$n$-variate sums of squares}}. We consider the \struc{$d$-th \emph{Lasserre relaxation}} \cite{Lasserre:NonnegativeApplications}
\begin{eqnarray}
 & & \struc{f_{\rm sos}^{(d)}} \ = \ \sup\left\{r \ : \ f - r = \sum_{i=0}^s \sigma_ig_i,\,\,\sigma_i\in\Sigma_{}^{}\mathbb R[\mathbf{x}]^2, \, \ g_0 = 1, \ \deg(\sigma_ig_i)\leq 2d\right\},
 \label{Equ:LasserreHierarchie}
\end{eqnarray} 
where 
$d\geq \max\lf\{\ceil*{\deg(f)/2}, \max_{1\leq i\leq s}\{\ceil*{\deg(g_i)/2}\} \ri\}$.\\

In what follows we provide several examples comparing Lasserre relaxation using the \textsc{Matlab} SDP solver  \textsc{Gloptipoly} \cite{gloptipoly} to our approach given in program \eqref{Equ:ConstrainedGP} using the \textsc{Matlab} GP solver \textsc{CVX} \cite{Boyd:Grant:CVX2,Boyd:Grant:Ye:CVX}. In every example in this section we optimize with respect to the constant term when applying program \eqref{Equ:ConstrainedGP}.

\begin{example}
  Let $f = 1 + x^4y^2+ x^2y^4 - 3x^2y^2$ be the Motzkin polynomial and $g_1 = x^3y^2$. Then 
  $$K \ = \ \{(x,y)\in\mathbb R^2 \ : \ x\geq 0 \text{ or } y = 0\}.$$
  Since $f$ is globally nonnegative and has two zeros $(1,1), (1,-1)$ on $K$, e.g. \cite{Reznick:SurveyHilbert17th}, we have $f_K^* = 0$. We consider the third Lasserre relaxation and obtain
  $$f_{\rm sos}^{(3)}  =  \sup\left\{r \ : \ f - r = \sigma_0 + \sigma_1\cdot g_1, \,\,\sigma_0,\sigma_1\in\Sigma_{}^{}\mathbb R[\mathbf{x}]^2, \deg(\sigma_0)\leq 6,\deg(\sigma_1g_1)\leq 6\right\}  =  -\infty,$$ since the problem is infeasible. Note that $K$ is unbounded. Hence, it is not necessarily the case that $f_{\rm sos}^{(d)} > -\infty$ for sufficiently high relaxation order $d$. Here, using \textsc{Gloptipoly}, one can find that $f_{\rm sos}^{(7)} = 0 = f_K^*$.
  
Now, we consider $s(f,g_1) = \sup\{G(\mu)_{\rm sonc} : \mu \in \R_{\geq 0}\} \ \leq \ f_K^*$ where $G(\mu) = f - \mu g_1$ with $\mu \geq 0$. Note that $\New(G(\mu))$ is a simplex for every choice of $\mu$. In particular, for $\mu = 0$ we have that $G(\mu)_{\rm sonc} = f_{\rm sonc} = 0$, since the Motzkin polynomial is a SONC polynomial; see Section \ref{SubSec:PrelimSONC} and also \cite{Iliman:deWolff:Circuits}. It follows that
$$-\infty \ = \ f_{\rm sos}^{(3)} \ < \ s(f,g_1) \ = \ 0 \ = \ f_K^*.$$ 
Hence, $s(f,g_1)$ yields the exact solution compared to the Lasserre relaxation. This is in sharp contrast to the geometric programming approach proposed in \cite{Ghasemi:Marshall:GP:Semialgebraic} where $f_{\rm sos}^{(d)} \geq s(f,\mathbf{g})$ holds in general.
\endexa
\end{example}

Note that there are techniques which handle polynomial optimization problems over unbounded feasible sets better than the SOS/SDP method, e.g. using the gradient ideal \cite{Nie:Demmel:Sturmfels:GradientIdeal} or using gradient tentacles \cite{Schweighofer:GradientTentacles}. Here, we restrict ourselves, however, to a comparison to SDP based methods.

\begin{example}
Let $f = 1 + x^4y^2 + xy$ and $g_1 = \frac{1}{2} + x^2y^4 - x^2y^6$. The feasible set $K$ is a non-compact set depicted in Figure \ref{Fig:FeasibleSet1}. Using \textsc{Gloptipoly}, one can check that $-\infty = f_{\rm sos}^{(4)}$ and the optimal solution is given for $d = 8$ with $f_{\rm sos}^{(8)} \approx 0.4474$. In this case one can extract the minimizers $(-0.557,1.2715)$ and $(0.557,-1.2715)$. 

We compare the results to our approach via geometric programming instead of Lasserre relaxations. From $f$ and $g_1$ we get $G(\mu)=(1-\frac{1}{2}\mu)+x^4y^2+\mu x^2y^6+xy-\mu x^2y^4$. Note that $\New(G(\mu))$ is a two dimensional simplex if $\mu\notin\{0,2\}$. Then, we have $\Delta(G)=\{\boldsymbol{\beta}, \tilde{\boldsymbol{\beta}}\}=\{(1,1),(2,4)\}$. Hence, we introduce the variables $(a_{\boldsymbol{\beta},1}, a_{\boldsymbol{\beta},2}, a_{\tilde{\boldsymbol{\beta}},1}, a_{\tilde{\boldsymbol{\beta}},2}, b_{\boldsymbol{\beta}}, b_{\tilde{\boldsymbol{\beta}}}, \mu)$. Therefore, the geometric program \eqref{Equ:ConstrainedGP} reads as follows:
{\small
\begin{eqnarray*}
& \inf\left\{\frac{1}{2}\mu + \frac{7}{10} \cdot b_{\boldsymbol{\beta}}^{\frac{10}{7}} \cdot \left(\frac{1}{5}\right)^{\frac{2}{7}} \cdot \left(\frac{1}{10}\right)^{\frac{1}{7}} \cdot (a_{\boldsymbol{\beta},1})^{-\frac{2}{7}} \cdot (a_{\boldsymbol{\beta},2})^{-\frac{1}{7}} + \frac{1}{5} \cdot b_{\tilde{\boldsymbol{\beta}}}^5 \cdot \left(\frac{1}{5}\right)^1 \cdot \left(\frac{3}{5}\right)^3 \cdot (a_{\tilde{\boldsymbol{\beta}},1})^{-1} \cdot (a_{\tilde{\boldsymbol{\beta}},2})^{-3} \right\}
\end{eqnarray*}}
such that the variables satisfy
\begin{eqnarray*}
a_{\boldsymbol{\beta},1} + a_{\tilde{\boldsymbol{\beta}},1} \leq 1 ,\; a_{\boldsymbol{\beta},2} + a_{\tilde{\boldsymbol{\beta}},2} \leq \mu \;\text{ and }\; 1 \leq b_{\boldsymbol{\beta}},\; \mu \leq b_{\tilde{\boldsymbol{\beta}}}\;. 
\end{eqnarray*}

We use the \textsc{Matlab} solver \textsc{CVX} to solve the program given above. The optimal solution is given by
\[
 (a_{\boldsymbol{\beta},1}, a_{\boldsymbol{\beta},2}, a_{\tilde{\boldsymbol{\beta}},1}, a_{\tilde{\boldsymbol{\beta}},2}, b_{\boldsymbol{\beta}}, b_{\tilde{\boldsymbol{\beta}}}, \mu) \ = \ (0.9105,0.0540,0.0895,0.0319,1.0000,0.0859,0.0859) \;.
\]
This leads to 
$$\gamma_{\rm sonc} \ \approx \ 0.5526$$
and hence $f_{\boldsymbol{\alp}(0)}-\gamma_{\rm sonc} \approx 0.4474$. Thus, we have 
$$f^{(8)}_{\rm sos} \ = \ f_{\boldsymbol{\alp}(0)}-\gamma_{\rm sonc} \ = \ s(f,g_1).$$
The equality $f_{\boldsymbol{\alp}(0)}-\gamma_{\rm sonc} =s(f,g_1)$ is not surprising, since the assumptions of Theorem \ref{Thm:ConstrainedGPequality} are satisfied. Thus, we get the optimal solution immediately via geometric programming whereas one needs $5$ relaxation steps via Lasserre relaxation. In this example both geometric programing and the Lasserre approach have a runtime below 1 second. 

Now, we demonstrate that our approach, in contrast to SDPs, is not sensitive to increasing the degree, since it is based on GPs. Namely, if we multiply all exponents  in $f$ and $g_1$ by 10, then the approaches differ significantly. By multiplying the exponents by 10 we have made a severe change to the problem since the term $x^{10}y^{10}$ is now a monomial square such that the exponent is a lattice point in the interior of the Newton polytope of the adjusted $G(\mu)$. Therefore, we have to ignore this term out when running the constrained optimization program \eqref{Equ:ConstrainedGP}. The adjusted program yields with \textsc{CVX} an output \texttt{NaN} in below one second. However, the reason is that it computes $\mu = 0$, which \textbf{is} the correct answer. Namely, after multiplying the exponents by 10, the only non monomial square terms are given by $g_1$. Thus, the optimal choice is $\mu = 0$, and we can see that the minimal value is attained at $(0,0)$ and $f_K^* = 1$ is given by the constant term of $f$.

In comparison, we have a runtime of approximately $1110$ seconds, i.e. approximately $18.5$ minutes with \textsc{Gloptipoly}. After this time \textsc{Gloptipoly} provides an output ``\texttt{Run into numerical problems.}''. It claims, however, to have solved the problem and provides the correct minimum $f_K^*=1$ at a minimizer $10^{-7} \cdot (-0.1057,0.1711)$, which, of course, is the origin up to a numerical error.

We remark that this particular artificial increase of the degree can, of course, be handled with SDP methods using pre-processing methods, e.g. a change of variables. The purpose of the rescaling of this and later examples is, however, to show that SONC/GP methods are \textit{inherently} not sensitive to an increase of the degree.
\label{Exa:Constrained2}
\endexa
\end{example}

\begin{figure}
\ifpictures
$\includegraphics[width=0.35\linewidth]{./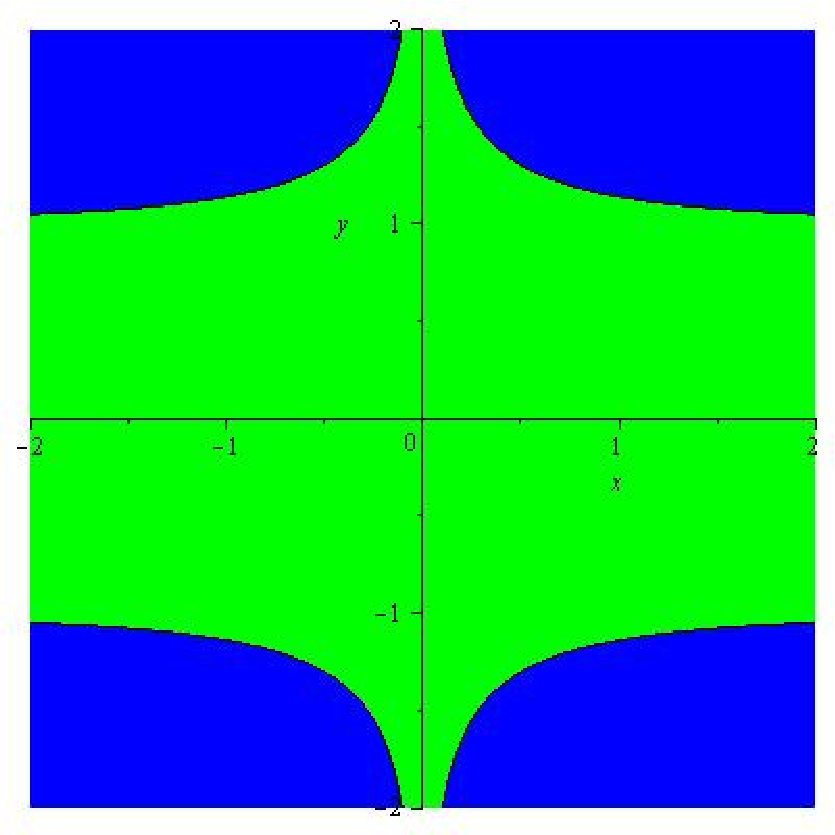}$
\fi
\caption{The feasible set for the constrained optimization problem in Example \ref{Exa:Constrained2} is the unbounded green (light) area.}
\label{Fig:FeasibleSet1}
\end{figure}

\begin{example}
Let $f = 1 + {x}^{2}{z}^{2} + {y}^{2}{z}^{2} + {x}^{2}{y}^{2} - 8xyz$ and $g_1 = {x}^{2}yz + x{y}^{2}z + {x}^{2}{y}^{2} - 2 + xyz$.
Using \textsc{Gloptipoly}, we get the following sequence of lower bounds: 
$$f_{\rm sos}^{(2)} = f_{\rm sos}^{(3)} = f_{\rm sos}^{(4)} = -\infty < f_{\rm sos}^{(5)} \approx -14.999.$$
However, one cannot certify the optimality via \textsc{Gloptipoly} in this case. Additionally, the sequence $f_{\rm sos}^{(d)}$ is not guaranteed to converge to $f_K^*$, since $K$ is unbounded. Symbolically, we were able to prove a global minimum of $f_K^* = -15$ with four global minimizers $(2,2,2), (-2,-2,2), (-2,2,-2), (2,-2,-2)$ using the quantifier elimination software \textsc{Synrac}, see \cite{Anai:Yanami:SYNRAC}. Now, we consider the approach via geometric programming instead of Lasserre relaxations. We have 
$$G(\mu) \ = \ (1+2\mu) + x^2z^2 + y^2z^2 + (1 - \mu)x^2y^2 + (- 8 -\mu )xyz  - \mu x^2yz - \mu xy^2z .$$
Therefore, $G(\mu)$ is an ST-polynomial for $\mu \in [0,1)$, and we have $\Delta(G)=\{\boldsymbol{\beta}, \overline{\boldsymbol{\beta}},\hat{\boldsymbol{\beta}}\}=$ $\{(1,1,1),(2,1,1),(1,2,1)\}$. Thus, our geometric program has the following 13 variables 
$$(a_{\boldsymbol{\beta},1},a_{\boldsymbol{\beta},2},a_{\boldsymbol{\beta},3},a_{\overline{\boldsymbol{\beta}},1},a_{\overline{\boldsymbol{\beta}},2},a_{\overline{\boldsymbol{\beta}},3},a_{\hat{\boldsymbol{\beta}},1},a_{\hat{\boldsymbol{\beta}},2},a_{\hat{\boldsymbol{\beta}},3},b_{\boldsymbol{\beta}},b_{\overline{\boldsymbol{\beta}}},b_{\hat{\boldsymbol{\beta}}}, \mu). $$
Hence, program \eqref{Equ:ConstrainedGP} is of the form
\begin{eqnarray*}
& & \inf\left\{0 \cdot \mu+ \frac{1}{4} \cdot b_{\boldsymbol{\beta}}^4 \cdot \left(\frac{1}{4}\right) \cdot \left(\frac{1}{4}\right) \cdot \left(\frac{1}{4}\right) \cdot (a_{\boldsymbol{\beta},1})^{-1} \cdot (a_{\boldsymbol{\beta},2})^{-1} \cdot (a_{\boldsymbol{\beta},3})^{-1}\right\}
\end{eqnarray*}
such that
$$
\begin{array}{cl}
 (1) & a_{\boldsymbol{\beta},1} + a_{\overline{\boldsymbol{\beta}},1} + a_{\hat{\boldsymbol{\beta}},1} \leq 1, \ a_{\boldsymbol{\beta},2} + a_{\overline{\boldsymbol{\beta}},2} + a_{\hat{\boldsymbol{\beta}},2} \leq 1, \ a_{\boldsymbol{\beta},3} + a_{\overline{\boldsymbol{\beta}},3} + a_{\hat{\boldsymbol{\beta}},3} + \mu \leq 1, \\
 (2) & \frac{1}{2} \cdot b_{\overline{\boldsymbol{\beta}}} \cdot \left(a_{\overline{\boldsymbol{\beta}},1}\right)^{-\frac{1}{2}} \cdot \left(a_{\overline{\boldsymbol{\beta}},3}\right)^{-\frac{1}{2}} \leq 1,\\
 & \frac{1}{2} \cdot b_{\hat{\boldsymbol{\beta}}} \cdot \left(a_{\hat{\boldsymbol{\beta}},2}\right)^{-\frac{1}{2}} \cdot \left(a_{\hat{\boldsymbol{\beta}},3}\right)^{-\frac{1}{2}} \leq 1, \\
 (3) & 8\cdot b_{\boldsymbol{\beta}}^{-1}\leq 1, \ \mu \cdot b_{\boldsymbol{\beta}}^{-1} \leq 1, \ \mu \cdot b_{\overline{\boldsymbol{\beta}}}^{-1} \leq 1, \ \mu \cdot b_{\hat{\boldsymbol{\beta}}}^{-1} \leq 1 \,.
\end{array}
$$
This leads to $\gamma_{\rm sonc}=\frac{1}{256}\cdot 8^4 =16$ and so $f_{\boldsymbol{\alp}(0)}-\gamma_{\rm sonc}=-15$. The runtime for this example is below 1 second. Multiplying the exponents of $f$ and $g_1$ by 10 yields the same results; the runtime for the geometric program remains \textbf{below 1 second}. In comparison, \textsc{Gloptipoly} yields
$$f_{\rm sos}^{(d)} \ = \ - \infty \text{ for } d \leq 19,$$
and provides a bound
$$f_{\rm sos}^{(20)} \ \approx \ -14.999$$
in the 20-th relaxation after $36563$ seconds, i.e. approximately \textbf{10.16 hours}. Moreover, although this bound is numerically equal to $f_K^*$, \textsc{Gloptipoly} was not able to certify that the correct bound was found.
\label{Exa:Constrained3}
\endexa
\end{example}

\begin{example}
Let $f = z^6 + x^4y^2 + x^2y^4 - 3x^2y^2z^2$ and $g_1 = x^2 + y^2 + z^2 - 1$. We obtain $G(\mu) = f - \mu g_1$. This problem is infeasible in the sense of program \eqref{Equ:ConstrainedGP}. Namely, condition \assum{} is never satisfied since for any $\mu > 0$ we have a vertex $(2,0,0)$ or $(0,2,0)$ of ${\rm New}(G(\mu))$ with a negative coefficient. Therefore, one can immediately conclude that $s(f,g_1)$ has to be obtained for $\mu = 0$. Thus, we have $s(f,g_1) = f_{\rm sonc}$. Since $f$ is the homogenized Motzkin polynomial we obtain immediately $f_{\rm sonc} = f_K^* = 0$. An analogous argumentation holds for the variation $\tilde{G}(\mu) = f + \mu g_1$.

It is well-known that SDP solvers have serious issues with optimizing $f$ for $g_1 \geq 0$ or $g_1 \leq 0$. For further information see \cite[Examples 5.3 and 5.4]{Nie:Jacobian}.
\endexa
\end{example}

In the last example in this section we show that for special simplices our geometric programming approach coincides with the one in \cite{ Ghasemi:Marshall:GP:Semialgebraic}.

\begin{example}
Suppose that $\New(G(\boldsymbol{\mu})) = \conv\{0, 2d\,\mathbf{e}_1,\dots,2d\,\mathbf{e}_n\}$. Hence, the Newton polytope is a $2d$-scaled standard simplex in $\mathbb R^n$, which is the case if the pure powers $x_j^{2d}$ for $1\leq j \leq n$ are present in the polynomial $f$ or in the constrained polynomials $g_i$. The corresponding polynomial $G(\boldsymbol{\mu})$ is an ST-polynomial; see Section \ref{SubSec:PrelimSONC}. Indeed, all examples in \cite[Example 4.8]{Ghasemi:Marshall:GP:Semialgebraic} are of that form and thus all of them are ST-polynomials. 

In this case the program \eqref{Equ:Constrained} coincides with the program (3) in \cite{Ghasemi:Marshall:GP:Semialgebraic}. One drawback of this setting is that the geometric programming bounds obtained from \eqref{Equ:Constrained} are at most as good as the bound $f_{\rm sos}^{(d)}$. Namely, if the Newton polytope of a circuit polynomial is a scaled standard simplex, then it is nonnegative if and only if it is a sum of squares; see \cite{Iliman:deWolff:Circuits} for further details. Thus, if we are in the setting of Ghasemi and Marshall and $G(\boldsymbol{\mu})$ is nonnegative, then it is a sum of squares of degree at most $2d$ which guarantees the existence of a decomposition in the sense of $f^{(d)}_{\rm sos}$; see \eqref{Equ:LasserreHierarchie}.

However, as we have shown in the previous examples, in the case of our program \eqref{Equ:Constrained} there exist also cases where the geometric programming bounds are better than $f_{\rm sos}^{(d)}$, since our approach is more general than in \cite{ Ghasemi:Marshall:GP:Semialgebraic}. The reason is that the cones of sums of nonnegative circuit polynomials and sums of squares do not contain each other (but both of them are contained in the cone of nonnegative polynomials); see \cite[Prop. 7.2]{Iliman:deWolff:Circuits}.

\endexa
\label{Exa:Constrained:GhasemiMarshallCase}
\end{example}

We point out that we make \textbf{no} assumption about the feasible set $K$. In particular, it is not assumed to be compact as it is in the classical setting via Lasserre relaxations in order to guarantee convergence of the relaxations. However, the crucial point in our setting so far is that $G(\boldsymbol{\mu})$ has to be an ST-polynomial. In the following Section \ref{Sec:NonSTPolynomials} we lay the foundation for the usage of our geometric programming approach also for non-ST-polynomials. 

But even if $G(\boldsymbol{\mu})$ is not an ST-polynomial, then we can enforce it to be an ST-polynomial in the case of a compact $K$. This can be achieved by adding a redundant constraint $g_{s+1} = x_1^{2d}+\ldots +x_{n}^{2d}+c$ for $c\in\mathbb R$ to the feasible set $K$. In  consequence $\New(G(\boldsymbol{\mu}))$ is a $2d$-scaled standard simplex and by the previous example our approach coincides with the one in \cite{Ghasemi:Marshall:GP:Semialgebraic}. Hence, the Lasserre relaxation cannot be outperformed in quality anymore. However, our approach can still have the better runtime. It would be interesting to add other redundant inequalities to $K$ such that the corresponding bounds are better than the ones obtained via Lasserre relaxations. Unfortunately, no systematic way is known so far.

Furthermore, we consider in this paper numerical methods to certify nonnegativity of polynomials. Often it is desirable to obtain exact solutions, i.e. a symbolic certification. It would be interesting to find symbolic certificates from our provided numerical solution. For SOS polynomials this has been studied e.g. by Peyrl and Parrilo \cite{Peyrl:Parrilo} and by Kaltofen, Li, Yang, and Zhi \cite{Kaltofen:Li:Yang:Zhi}.

\section{Optimization for Non-ST-Polynomials}
\label{Sec:NonSTPolynomials}

The goal of this section is to provide a first approach to tackle optimization problems (both constrained and unconstrained) which cannot be expressed as a single ST-polynomial using the methods developed in \cite{Iliman:deWolff:Circuits,Iliman:deWolff:GP} and in Section \ref{Sec:RelaxToGP} in this article. A more careful investigation of these general types of nonnegativity problems will be content of a follow-up article.\\

We start with the case of global nonnegativity for arbitrary polynomials via SONC certificates. We recall the following statement from \cite[Definition 7.1 and Proposition 7.2]{Iliman:deWolff:Circuits}, which immediately follows from Section \ref{SubSec:PrelimSONC}.
\begin{fact}
Let $f \in \R[\mathbf{x}]$ and assume that there exist SONC polynomials $g_1,\ldots,g_k$ and positive real numbers $\mu_1,\ldots,\mu_k$ such that $f = \sum_{i = 1}^k \mu_i g_i$. Then $f$ is nonnegative.
\end{fact}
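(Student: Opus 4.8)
The statement is essentially immediate from the definitions, so the plan is short. First I would unwind the definition of a SONC polynomial recalled in Section \ref{SubSec:PrelimSONC}: each $g_i$ is a \emph{sum of nonnegative circuit polynomials}, i.e.\ $g_i = \sum_{\ell} h_{i,\ell}$ where every $h_{i,\ell}$ is a circuit polynomial that is nonnegative on $\R^n$. The nonnegativity here is part of what the word ``nonnegative'' in ``nonnegative circuit polynomial'' encodes; one could in principle re-derive it termwise from the coefficient criterion of Theorem \ref{Thm:Positiv}, but for the present Fact all that is needed is that each $h_{i,\ell}$ satisfies $h_{i,\ell}(\mathbf{x}) \geq 0$ for all $\mathbf{x} \in \R^n$.

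Second, I would use the trivial observation that a finite sum of functions which are pointwise nonnegative on $\R^n$ is again pointwise nonnegative on $\R^n$; applied to $g_i = \sum_\ell h_{i,\ell}$ this gives $g_i(\mathbf{x}) \geq 0$ for all $\mathbf{x} \in \R^n$ and every $i$.

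Third, since $\mu_i > 0$ for each $i$ — in fact $\mu_i \geq 0$ would already suffice — the function $f = \sum_{i=1}^k \mu_i g_i$ is a nonnegative real linear combination of functions that are nonnegative at every point of $\R^n$, hence $f(\mathbf{x}) = \sum_{i=1}^k \mu_i g_i(\mathbf{x}) \geq 0$ for all $\mathbf{x} \in \R^n$, which is the assertion. Equivalently, one may phrase the whole argument structurally: the SONC polynomials form a convex cone contained in the cone of polynomials nonnegative on $\R^n$, and by hypothesis $f$ lies in the former cone, hence in the latter.

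There is no real obstacle in this proof; the only point requiring (minimal) care is invoking the correct definitions, namely that ``nonnegative circuit polynomial'' already carries global nonnegativity with it, and that both taking finite sums and scaling by positive reals preserve pointwise nonnegativity.
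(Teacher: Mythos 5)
Your proof is correct and matches the paper's treatment: the paper regards this Fact as immediate from the definition of SONC recalled in Section \ref{SubSec:PrelimSONC} (citing \cite[Definition 7.1 and Proposition 7.2]{Iliman:deWolff:Circuits}), which is exactly the cone argument you spell out. Nothing further is needed.
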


Of course, if a SONC decomposition exists, then it is not obvious how to find it in general. For ST-polynomials we know that we can find a SONC decomposition via the geometric optimization problem described in Theorem \ref{Thm:GPUnconstrainedCase}. Thus, we investigate a general polynomial $f \in \R[\mathbf{x}]$ supported on a set $A \subset \N^n$ satisfying \assum. We denote
\begin{eqnarray*}
  f & = & \sum_{j = 0}^d f_{\boldsymbol{\alp}(j)} \mathbf{x}^{\boldsymbol{\alp}(j)} + \sum_{\boldsymbol{\beta} \in \Delta(f)} f_{\boldsymbol{\beta}} \mathbf{x}^{\boldsymbol{\beta}} 
\end{eqnarray*}
such that $f_{\boldsymbol{\alp}(j)} \mathbf{x}^{\boldsymbol{\alp}(j)}$ are monomial squares. By \assum{}, $V(A)$ are the vertices of $\New(f)$ and we have $V(A) \subseteq \{\boldsymbol{\alp}(0),\ldots,\boldsymbol{\alp}(d)\}$; equality, however, is not required here: $\{\boldsymbol{\alp}(0),\ldots,\boldsymbol{\alp}(d)\}$ can also contain exponents of monomial squares in $\Delta(A) \setminus \Delta(f)$ which are not vertices of $\conv(A)$. For simplicity we assume in what follows that the affine span of $A$ is $n$-dimensional. We proceed as follows:

\begin{enumerate}
 \item Choose a triangulation $\struc{T_1,\ldots,T_{k}}$ of exponents $\boldsymbol{\alp}(0),\ldots,\boldsymbol{\alp}(d) \in A$ corresponding to the monomial squares.
 \item Compute the induced covering $\struc{A_1,\ldots,A_{k}}$ of $A$ given by $A_i = A \cap T_i$ for $1 \leq i \leq k$.
 \item Assume that $\boldsymbol{\beta} \in \Delta(f) \subset A$ is contained in more than one of the $A_i$'s. Let without loss of generality $\boldsymbol{\beta} \in A_1,\ldots,A_l$ with $1 < l \leq k$. Then we choose $\struc{f_{\boldsymbol{\beta},1},\ldots,f_{\boldsymbol{\beta},l}} \in \R$ such that $\sum_{i = 1}^l f_{\boldsymbol{\beta},i} = f_{\boldsymbol{\beta}}$ and $\sign(f_{\boldsymbol{\beta},i}) = \sign(f_{\boldsymbol{\beta}})$ for all $1 \leq i \leq l$. We proceed analogously for $\boldsymbol{\alp}(0),\ldots,\boldsymbol{\alp}(d)$.
 \item Define new polynomials $g_1,\ldots,g_{k}$ such that
 \begin{eqnarray*}
    \struc{g_i} & = & \sum_{\boldsymbol{\beta} \in A_i} f_{\boldsymbol{\beta},i} \mathbf{x}^{\boldsymbol{\beta}}.
 \end{eqnarray*}
\end{enumerate}

Note that by (1) and (2) the covering $A_i$ is a set of integer tuples such that $\conv(A_i)$ is a simplex with even vertices and $A_i$ contains no even points corresponding to monomial squares except for the vertices of $\conv(A_i)$. Thus, by (2)--(4) we see that all $g_i$ are ST-polynomials, since the signs of the $f_{\boldsymbol{\beta},i}$ are identical with the signs of the coefficients of $f$. Therefore, monomial squares $f_{\boldsymbol{\alp}(j)} \mathbf{x}^{\boldsymbol{\alp}(j)}$ of $f$ get decomposed into a sum of monomial squares $\sum_{i = 1}^k f_{\boldsymbol{\alp}(j),i} \mathbf{x}^{\boldsymbol{\alp}(j)}$   such that each individual monomial square $f_{\boldsymbol{\alp}(j),i} \mathbf{x}^{\boldsymbol{\alp}(j)}$ is a term of exactly one $g_i$. We proceed analogously for the terms $f_{\boldsymbol{\beta}} \mathbf{x}^{\boldsymbol{\beta}}$. 
Additionally, it follows by construction that $f = \sum_{i = 1}^{k} g_i$. We apply the GP proposed in Corollary \ref{Cor:GP} on each of the $g_i$ with respect to a monomial square $f_{\boldsymbol{\alp}(j),i}\mathbf{x}^{\boldsymbol{\alp}(j)}$, which is a vertex of $\New(g_i) = \conv(A_i)$ (not necessarily the same $\boldsymbol{\alp}(j)$ for every $g_i$); we denote the minimizer by $\struc{m^*_i}$. We make the following observation about these minimizers which was similarly already pointed out in \cite[Section 3]{Iliman:deWolff:Circuits}:

\begin{lemma}
Let $f \in \R[\mathbf{x}]$ be a nonnegative circuit polynomial. Let $b_{\boldsymbol{\alp}} \mathbf{x}^{\boldsymbol{\alp}}$ be a monomial with $b_{\boldsymbol{\alp}} > 0$ and $\boldsymbol{\alp} \in (2\Z)^{n}$. Then $b_{\boldsymbol{\alp}} \mathbf{x}^{\boldsymbol{\alp}} \cdot f$ is also a nonnegative circuit polynomial.
\label{Lem:MinimizerLaurentPolynomial}
\end{lemma}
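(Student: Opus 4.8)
The plan is to reduce the statement to the definition of nonnegative circuit polynomials and exploit the multiplicative structure of the circuit number under multiplication by a monomial square. First I would set up notation: write $f$ as a circuit polynomial with simplex part $\sum_{j=0}^r f_{\boldsymbol{\gamma}(j)} \mathbf{x}^{\boldsymbol{\gamma}(j)}$ (all monomial squares, since \assum{} holds for $f$) and unique tail term $f_{\boldsymbol{\beta}} \mathbf{x}^{\boldsymbol{\beta}}$ with $\boldsymbol{\beta} = \sum_j \lambda_j^{(\boldsymbol{\beta})} \boldsymbol{\gamma}(j)$. Multiplying by $b_{\boldsymbol{\alp}} \mathbf{x}^{\boldsymbol{\alp}}$ with $b_{\boldsymbol{\alp}} > 0$ and $\boldsymbol{\alp} \in (2\Z)^n$ shifts every exponent by $\boldsymbol{\alp}$, so the new exponents are $\boldsymbol{\gamma}(j) + \boldsymbol{\alp}$ and $\boldsymbol{\beta} + \boldsymbol{\alp}$, with coefficients $b_{\boldsymbol{\alp}} f_{\boldsymbol{\gamma}(j)}$ and $b_{\boldsymbol{\alp}} f_{\boldsymbol{\beta}}$. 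Strictly speaking $b_{\boldsymbol{\alp}} \mathbf{x}^{\boldsymbol{\alp}} \cdot f$ is a Laurent polynomial if $\boldsymbol{\alp}$ has negative entries, but as noted after Proposition \ref{Prop:NecessaryConditions} the whole circuit/SONC framework extends verbatim to Laurent polynomials via division by an even monomial, so this causes no trouble.

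The key combinatorial observation is that translating a simplex by a fixed vector preserves affine (in)dependence and barycentric coordinates: since $\boldsymbol{\beta} + \boldsymbol{\alp} = \sum_{j=0}^r \lambda_j^{(\boldsymbol{\beta})}(\boldsymbol{\gamma}(j) + \boldsymbol{\alp})$ still holds (because $\sum_j \lambda_j^{(\boldsymbol{\beta})} = 1$), the points $\boldsymbol{\gamma}(j) + \boldsymbol{\alp}$ are again affinely independent, the unique convex combination expressing $\boldsymbol{\beta} + \boldsymbol{\alp}$ has the \emph{same} barycentric coordinates $\lambda_j^{(\boldsymbol{\beta})}$, and the support set $\{\boldsymbol{\gamma}(j) + \boldsymbol{\alp}\} \cup \{\boldsymbol{\beta} + \boldsymbol{\alp}\}$ again forms a circuit. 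Moreover all the shifted simplex vertices $\boldsymbol{\gamma}(j) + \boldsymbol{\alp}$ lie in $(2\Z)^n$ because $\boldsymbol{\gamma}(j) \in (2\N)^n$ and $\boldsymbol{\alp} \in (2\Z)^n$, and $\boldsymbol{\beta} + \boldsymbol{\alp} \in (2\Z)^n$ if and only if $\boldsymbol{\beta} \in (2\N)^n$ (parity in each coordinate is unchanged by adding an even number). Hence $b_{\boldsymbol{\alp}} \mathbf{x}^{\boldsymbol{\alp}} \cdot f$ is again a circuit (Laurent) polynomial with its unique tail term being a monomial square exactly when $f$'s tail term is, and with $\mathrm{nz}(\boldsymbol{\beta})$ unchanged.

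Next I would compute the new circuit number. Plugging the new simplex coefficients $b_{\boldsymbol{\alp}} f_{\boldsymbol{\gamma}(j)}$ into \eqref{Equ:DefCircuitNumber} gives
\begin{eqnarray*}
\Theta_{b_{\boldsymbol{\alp}} \mathbf{x}^{\boldsymbol{\alp}} \cdot f}(\boldsymbol{\beta} + \boldsymbol{\alp}) & = & \prod_{j \in \mathrm{nz}(\boldsymbol{\beta})} \left(\frac{b_{\boldsymbol{\alp}} f_{\boldsymbol{\gamma}(j)}}{\lambda_j^{(\boldsymbol{\beta})}}\right)^{\lambda_j^{(\boldsymbol{\beta})}} \ = \ b_{\boldsymbol{\alp}}^{\sum_j \lambda_j^{(\boldsymbol{\beta})}} \cdot \Theta_f(\boldsymbol{\beta}) \ = \ b_{\boldsymbol{\alp}} \cdot \Theta_f(\boldsymbol{\beta}),
\end{eqnarray*}
using $\sum_{j \in \mathrm{nz}(\boldsymbol{\beta})} \lambda_j^{(\boldsymbol{\beta})} = 1$. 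The new tail coefficient is $b_{\boldsymbol{\alp}} f_{\boldsymbol{\beta}}$, so $|b_{\boldsymbol{\alp}} f_{\boldsymbol{\beta}}| = b_{\boldsymbol{\alp}} |f_{\boldsymbol{\beta}}|$ and the condition $|b_{\boldsymbol{\alp}} f_{\boldsymbol{\beta}}| \leq \Theta_{b_{\boldsymbol{\alp}} \mathbf{x}^{\boldsymbol{\alp}} \cdot f}(\boldsymbol{\beta}+\boldsymbol{\alp})$ is equivalent to $|f_{\boldsymbol{\beta}}| \leq \Theta_f(\boldsymbol{\beta})$ after dividing by the positive scalar $b_{\boldsymbol{\alp}}$; and the parity condition $\boldsymbol{\beta}+\boldsymbol{\alp} \notin (2\Z)^n$ is equivalent to $\boldsymbol{\beta} \notin (2\N)^n$, while $\boldsymbol{\beta}+\boldsymbol{\alp} \in (2\Z)^n$ is equivalent to $\boldsymbol{\beta} \in (2\N)^n$ together with $f_{\boldsymbol{\beta}} \geq -\Theta_f(\boldsymbol{\beta}) \iff b_{\boldsymbol{\alp}} f_{\boldsymbol{\beta}} \geq -b_{\boldsymbol{\alp}}\Theta_f(\boldsymbol{\beta})$. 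So condition (2) of Theorem \ref{Thm:Positiv} holds for $b_{\boldsymbol{\alp}} \mathbf{x}^{\boldsymbol{\alp}} \cdot f$ if and only if it holds for $f$; since $f$ is nonnegative, Theorem \ref{Thm:Positiv} gives that condition (2) holds for $f$, hence for $b_{\boldsymbol{\alp}} \mathbf{x}^{\boldsymbol{\alp}} \cdot f$, hence by the converse direction of Theorem \ref{Thm:Positiv} the product is a nonnegative circuit polynomial. (Alternatively, and perhaps cleaner to write: $b_{\boldsymbol{\alp}} \mathbf{x}^{\boldsymbol{\alp}}$ is itself a monomial square, its evaluation on $(\R^*)^n$ is strictly positive, so $b_{\boldsymbol{\alp}} \mathbf{x}^{\boldsymbol{\alp}} \cdot f \geq 0$ on $(\R^*)^n$ and hence on all of $\R^n$ by continuity; combined with the structural observations above that it is a circuit polynomial, nonnegativity of the product follows.) I do not expect a real obstacle here — the only point requiring care is the bookkeeping of parities and the Laurent-polynomial convention, so I would state those explicitly but keep the scalar-homogeneity computation of the circuit number as the crux.
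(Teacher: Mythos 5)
Your proposal is correct, and in fact it contains two proofs. The paper's own argument is precisely your parenthetical alternative: it observes that \assum{} and the conditions (ST1), (ST2) are preserved under multiplication by $b_{\boldsymbol{\alp}}\mathbf{x}^{\boldsymbol{\alp}}$ (the support is merely translated by the even vector $\boldsymbol{\alp}$), so the product is again a circuit (Laurent) polynomial, and then concludes nonnegativity simply because $b_{\boldsymbol{\alp}}\mathbf{x}^{\boldsymbol{\alp}} \geq 0$ and $f \geq 0$, so the product of two nonnegative functions is nonnegative; no appeal to Theorem \ref{Thm:Positiv} is made. Your main line instead re-derives nonnegativity through the circuit number: translation invariance of the barycentric coordinates together with $\sum_{j \in {\rm nz}(\boldsymbol{\beta})}\lambda_j^{(\boldsymbol{\beta})} = 1$ gives the homogeneity $\Theta_{b_{\boldsymbol{\alp}}\mathbf{x}^{\boldsymbol{\alp}} \cdot f}(\boldsymbol{\beta}+\boldsymbol{\alp}) = b_{\boldsymbol{\alp}}\,\Theta_f(\boldsymbol{\beta})$, and then both directions of Theorem \ref{Thm:Positiv} transfer condition (2) from $f$ to the product. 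This is correct but longer than needed; what it buys is a sharper quantitative fact -- the exact scaling of the circuit number, hence of the admissible tail coefficient, under multiplication by a monomial square -- which the two-line ``product of nonnegatives'' argument does not record. It also requires Theorem \ref{Thm:Positiv} in the Laurent setting, which, as you correctly note, follows from the standard reduction of dividing out an even monomial; your bookkeeping of parities and of ${\rm nz}(\boldsymbol{\beta})$ is accurate throughout.
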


Note particularly that if $\mathbf{v} \in (\R^*)^n$ satisfies $f(\mathbf{v}) = 0$, then $(b_{\boldsymbol{\alp}} \mathbf{x}^{\boldsymbol{\alp}} \cdot f)(\mathbf{v}) = 0$.

\begin{proof}
It is easy to see that all conditions for \assum{} as well as the conditions (ST1) and (ST2) remain valid for $b_{\boldsymbol{\alp}} \mathbf{x}^{\boldsymbol{\alp}} \cdot f$. Thus, $b_{\boldsymbol{\alp}} \mathbf{x}^{\boldsymbol{\alp}} \cdot f$ still is a circuit polynomial and since $b_{\boldsymbol{\alp}} \mathbf{x}^{\boldsymbol{\alp}} \geq 0$ it is also nonnegative. 
\end{proof}

\begin{prop}
 Let $f$, $g_1,\ldots,g_{k}$, and $m^*_i$ be as explained above. Assume for $i = 1,\ldots,k$ that $m^*_i$ corresponds to the monomial square $f_{\boldsymbol{\alp}(j),i} \mathbf{x}^{\boldsymbol{\alp}(j_i)}$ with $\boldsymbol{\alp}(j_i) \in \{\boldsymbol{\alp}(1),\ldots,\boldsymbol{\alp}(d)\} \cap V(A_i)$. Then	
 $f - \sum_{i = 1}^{k} m^*_i \mathbf{x}^{\boldsymbol{\alp}(j_i)}$
is a SONC and hence nonnegative. Thus, the $m^*_i$ provide bounds for the coefficients $f_{\boldsymbol{\alp}(j),i}$ for $f$ to be nonnegative. Particularly, if for $i = 1,\ldots,l$ with $l \leq k$ the exponents $\boldsymbol{\alp}(j_i)$ are the origin, then  $f_{\boldsymbol{\alp}(0)} - \sum_{i = 1}^{l} m^*_i$ is a lower bound for $\struc{f^*} = \sup\{\gamma \in \R \ | \ f - \gamma \geq 0\}$.
\end{prop}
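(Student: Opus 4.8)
The plan is to reduce the statement to the unconstrained ST-theory -- Corollary~\ref{Cor:GP} together with Theorem~\ref{Thm:STPolynomialNN} -- applied separately to each piece $g_i$ of the decomposition $f=\sum_{i=1}^{k}g_i$ built above, and then to reassemble using that a sum of SONC polynomials is again a SONC polynomial (the SONC cone is closed under addition). First I would check that each $g_i$ genuinely is an ST-polynomial: by the construction in steps~(1)--(4), $\New(g_i)=\conv(A_i)$ is a simplex whose vertices lie among the even exponents $\boldsymbol{\alp}(0),\dots,\boldsymbol{\alp}(d)$, and the sign-preserving splitting of coefficients makes every $f_{\boldsymbol{\alp}(j),i}$ inherit the positive sign of $f_{\boldsymbol{\alp}(j)}$ (positive because $\boldsymbol{\alp}(j)$ indexes a monomial square of $f$), so every vertex term of $g_i$ is a monomial square; hence \assum{}, (ST1) and (ST2) hold for $g_i$ and Corollary~\ref{Cor:GP} applies to it.

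Next, fix for each $i$ the chosen reference vertex $\boldsymbol{\alp}(j_i)\in V(A_i)$, a monomial-square exponent. Relabeling the vertices of $A_i$ so that $\boldsymbol{\alp}(j_i)$ takes on the role of ``$\boldsymbol{\alp}(0)$'' in Corollary~\ref{Cor:GP} -- which is legitimate, since neither that corollary nor Theorem~\ref{Thm:STPolynomialNN} uses anything about $\boldsymbol{\alp}(0)$ beyond being a vertex of the defining simplex -- one obtains $(g_i)_{\rm sonc}=f_{\boldsymbol{\alp}(j_i),i}-m^{*}_i$ and, by Theorem~\ref{Thm:STPolynomialNN}, that the polynomial $\tilde g_i$ obtained from $g_i$ by replacing the coefficient $f_{\boldsymbol{\alp}(j_i),i}$ of $\mathbf{x}^{\boldsymbol{\alp}(j_i)}$ by $m^{*}_i$ is a sum of nonnegative circuit polynomials.

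Now I reassemble. Since the SONC cone is closed under addition, $\sum_{i=1}^{k}\tilde g_i$ is again a SONC; expanding and collecting the reference monomials, this is $f-\sum_{i=1}^{k}\bigl(f_{\boldsymbol{\alp}(j_i),i}-m^{*}_i\bigr)\mathbf{x}^{\boldsymbol{\alp}(j_i)}$, i.e.\ $f$ with each vertex contribution $f_{\boldsymbol{\alp}(j_i),i}\mathbf{x}^{\boldsymbol{\alp}(j_i)}$ lowered to $m^{*}_i\mathbf{x}^{\boldsymbol{\alp}(j_i)}$, and by the Fact at the start of Section~\ref{Sec:NonSTPolynomials} it is nonnegative on $\R^n$; this is the first assertion, and whenever $f_{\boldsymbol{\alp}(j_i),i}\ge m^{*}_i$ for all $i$ one can add the nonnegative monomials back to conclude that each $g_i$, hence $f$ itself, is a SONC, so the $m^{*}_i$ act as thresholds on the coefficients $f_{\boldsymbol{\alp}(j_i),i}$. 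For the last assertion, let the origin be one of the monomial-square exponents and be chosen as reference exactly for $i=1,\dots,l$, so that the constant term of $f$ sits only in $g_1,\dots,g_l$, i.e.\ $f_{\boldsymbol{\alp}(0)}=\sum_{i=1}^{l}f_{\mathbf 0,i}$, and $f_{\boldsymbol{\alp}(j_i),i}\ge m^{*}_i$ for $i>l$. For $i\le l$ the polynomial $\tilde g_i=g_i-f_{\mathbf 0,i}+m^{*}_i$ is a SONC; summing these over $i\le l$ and adding the SONCs $g_{l+1},\dots,g_k$ shows that $f-f_{\boldsymbol{\alp}(0)}+\sum_{i=1}^{l}m^{*}_i$ is a SONC, hence nonnegative, i.e.\ $f(\mathbf x)\ge f_{\boldsymbol{\alp}(0)}-\sum_{i=1}^{l}m^{*}_i$ on $\R^n$, so $f_{\boldsymbol{\alp}(0)}-\sum_{i=1}^{l}m^{*}_i\le f^{*}$.

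I do not expect a real analytic obstacle: all the content is already packaged in Corollary~\ref{Cor:GP}, and what remains is bookkeeping. The two points that need care are: (a) the per-piece geometric program is run against a reference vertex $\boldsymbol{\alp}(j_i)$ that need not be the origin, so one must observe, as above, that Corollary~\ref{Cor:GP} and Theorem~\ref{Thm:STPolynomialNN} are insensitive to the labeling of the simplex vertices; and (b) in the ``particular'' case one must ensure the origin is confined to the pieces $g_1,\dots,g_l$ for which it is the chosen reference -- automatic when $\mathbf 0$ is a vertex of $\New(f)$ and one uses it as reference for every simplex of the chosen triangulation containing it -- and that $g_{l+1},\dots,g_k$ are themselves SONC; the exponents $\boldsymbol{\alp}(j_i)$ being even makes $\mathbf{x}^{\boldsymbol{\alp}(j_i)}\ge 0$, so these pieces can only improve the bound. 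Lemma~\ref{Lem:MinimizerLaurentPolynomial} is not needed for the bound itself, but it allows one to restate the per-piece conclusion after dividing $g_i$ by $\mathbf{x}^{\boldsymbol{\alp}(j_i)}$ and to carry any common zero of the individual certificates over to $f$.
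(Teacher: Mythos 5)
Your proof is correct and follows essentially the same route as the paper: each $g_i$ is an ST-polynomial by construction, Corollary \ref{Cor:GP} (with $\boldsymbol{\alp}(j_i)$ in the role of $\boldsymbol{\alp}(0)$) is applied piecewise, and the pieces are reassembled using that the SONC cone is closed under addition, which is exactly the paper's (much terser) argument. Your additional bookkeeping --- reading $m_i^*$ as the replaced coefficient value rather than the amount subtracted, and flagging that the ``particularly'' part needs the constant term to be confined to $g_1,\dots,g_l$ and the remaining pieces $g_{l+1},\dots,g_k$ to be nonnegative (i.e.\ $f_{\boldsymbol{\alp}(j_i),i}\ge m_i^*$ for $i>l$) --- matches how the proposition is actually used in the paper's examples and merely makes explicit hypotheses the paper leaves implicit.
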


\begin{proof}
 By construction, we know that $g_i - m^*_i \mathbf{x}^{\boldsymbol{\alp}(j_i)}$ is a SONC. Thus, $f - \sum_{i = 1}^{k} m^*_i \mathbf{x}^{\boldsymbol{\alp}(j_i)} = \sum_{i = 1}^{k} g_i - m^*_i \mathbf{x}^{\boldsymbol{\alp}(j_i)}$ is a SONC, too. The last part follows by the definitions of the $m_i^*$'s and $f^*$.
\end{proof}

Note that the decomposition of $f$ into the $g_i$'s is not unique. First, the triangulation in (1) is not unique in general. And, second, the decomposition of the terms in (3) is arbitrary. Note also that there exist several monomial squares which appear in more than one $g_i$, since membership in $A_i$ is given by the chosen triangulation and every simplex $T_1$ intersects at least one other simplex $T_2$ in an $n-1$ dimensional face, which means that $A_1 \cap A_2$ contains at least $n$ even elements. As mentioned in the introduction, the problem to identify an optimal triangulation and an optimal decomposition of coefficients will be discussed in a follow-up article.\\

We provide some examples to show how this generalized approach can be used in practice.

\begin{example}
Let $f = 6 + x_1^2x_2^6 + 2x_1^4x_2^6 + 1x_1^8x_2^2 -1.2x_1^2x_2^3 -0.85x_1^3x_2^5 -0.9x_1^4x_2^3 -0.73x_1^5x_2^2 -1.14x_1^7x_2^2$. 
We choose a triangulation
\begin{eqnarray*}
 \{\alert{\mathbf{(0,0)}},\alert{\mathbf{(2,6)}},\alert{\mathbf{(4,6)}},(2,3),(3,5)\},\{\alert{\mathbf{(0,0)}},\alert{\mathbf{(4,6)}},\alert{\mathbf{(8,2)}},(2,3),(4,3),(5,2),(7,2)\}.
\end{eqnarray*}
Here and in what follows the vertices of each simplex are printed in red (bold). For the corresponding Newton polytope see Figure \ref{Fig:NewtonPolytopes1}. We split the coefficients equally among the two triangulations and obtain two ST-polynomials
\begin{eqnarray*}
	g_1 & = & 3 + x_1^2x_2^6 + x_1^4x_2^6 -0.6x_1^2x_2^3 -0.85x_1^3x_2^5, \text{ and } \\
	g_2 & = & 3 + x_1^4x_2^6 + 1x_1^8x_2^2 -0.6x_1^2x_2^3 -0.9x_1^4x_2^3 -0.73x_1^5x_2^2 -1.14x_1^7x_2^2.
\end{eqnarray*}
Using \textsc{CVX}, we apply the GP from Corollary \ref{Cor:GP} and obtain optimal values $m_1^* = 0.2121$, $m_2^* = 2.5193$, and a SONC decomposition
{\small
\begin{eqnarray*}
\begin{array}{lclc}
0.173 + \eps x_1^2x_2^6 + 0.522x_1^4x_2^6 -0.6x_1^2x_2^3 & + &
0.04 + x_1^2x_2^6 + 0.478x_1^4x_2^6 -0.85x_1^3x_2^5 & + \\
0.427 + 0.211x_1^4x_2^6 +\eps x_1^8x_2^2 -0.6x_1^2x_2^3 & + & 
0.663 + 0.436x_1^4x_2^6 + 0.085x_1^8x_2^2 -0.9x_1^4x_2^3 & + \\
0.753 + 0.186x_1^4x_2^6 + 0.177x_1^8x_2^2 -0.73x_1^5x_2^2 & + &
0.676 + 0.167x_1^4x_2^6 + 0.738x_1^8x_2^2 -1.14x_1^7x_2^2, & \\
\end{array}
\end{eqnarray*}
}
with $\eps < 10^{-10}$, i.e. $\eps$ is numerically zero. Namely, $(2,3)$ is located on the segment given by $\alert{\mathbf{(0,0)}}$ and $\alert{\mathbf{(4,6)}}$ and thus $\alert{\mathbf{(2,6)}}$ and $\alert{\mathbf{(8,2)}}$ have coefficients zero in the convex combinations of the point $(2,3)$. 

Thus, the optimal value $f_{\rm sonc}$, which provides us a lower bound for $f^*$, is $f_{\rm sonc} \approx 6 - 2.731 = 3.269$. In comparison, via Lasserre relaxation one obtains an only slightly better optimal value $f^*=3.8673$.

Our GP based bound can be improved significantly via making small changes in the distribution of the coefficients. For example, if one decides not to split the coefficient of the term $x_1^2x_2^3$ among $g_1$ and $g_2$ equally, but to put the entire weight of the coefficient into $g_1$, i.e.,
\begin{eqnarray*}
 	\tilde{g}_1 & = & 3 + x_1^2x_2^6 + x_1^4x_2^6 -1.2x_1^2x_2^3 -0.85x_1^3x_2^5, \text{ and } \\
	\tilde{g}_2 & = & 3 + x_1^4x_2^6 + 1x_1^8x_2^2 -0.9x_1^4x_2^3 -0.73x_1^5x_2^2 -1.14x_1^7x_2^2,
\end{eqnarray*}
then this yields to an improved bound $\tilde{f}_{\rm sonc} \approx 3.572$.
\label{Exa:Triangulation1}
\endexa
\end{example}

\begin{figure}
 \ifpictures
\includegraphics[width=0.45\linewidth]{./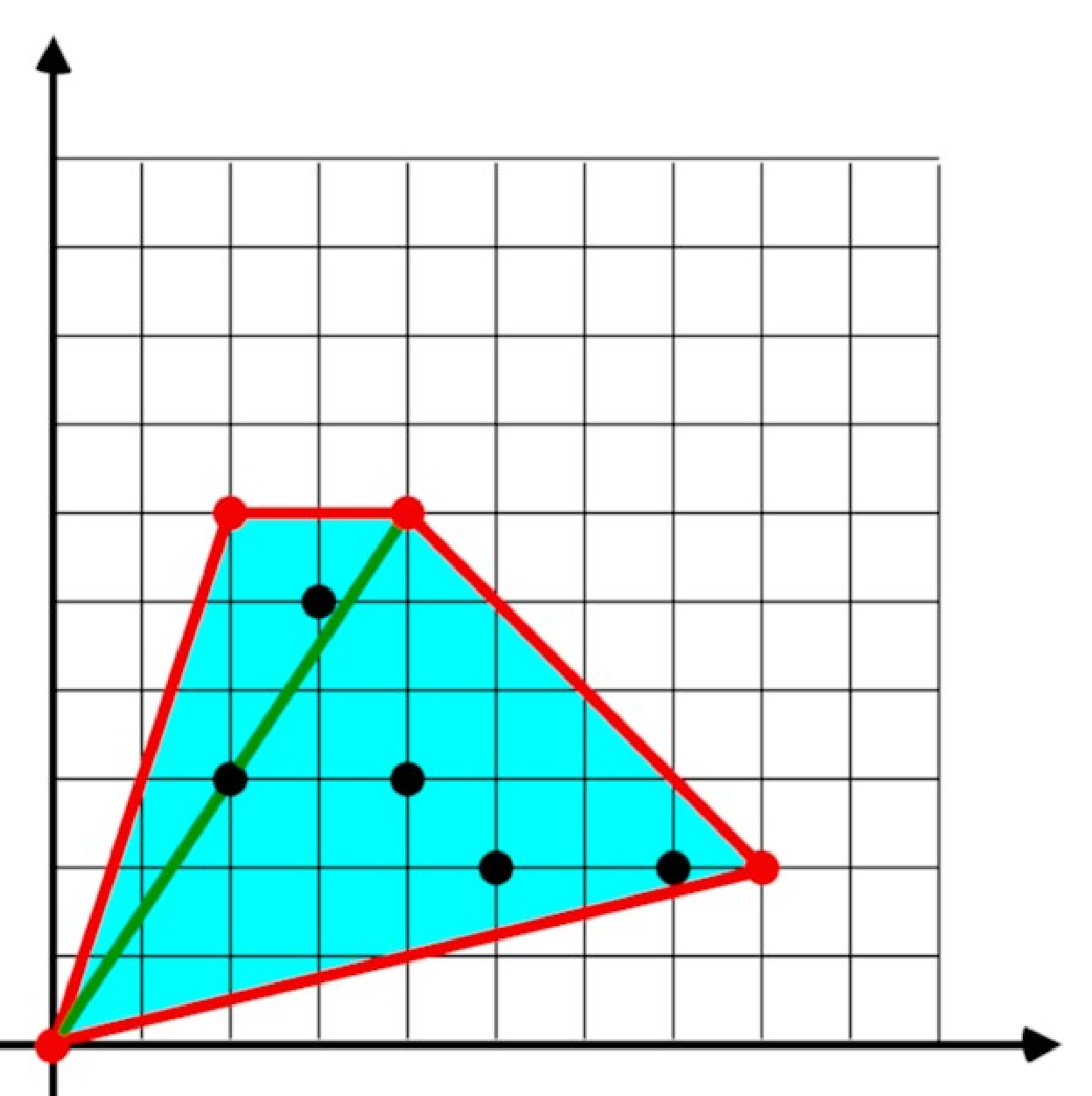} \quad
\includegraphics[width=0.45\linewidth]{./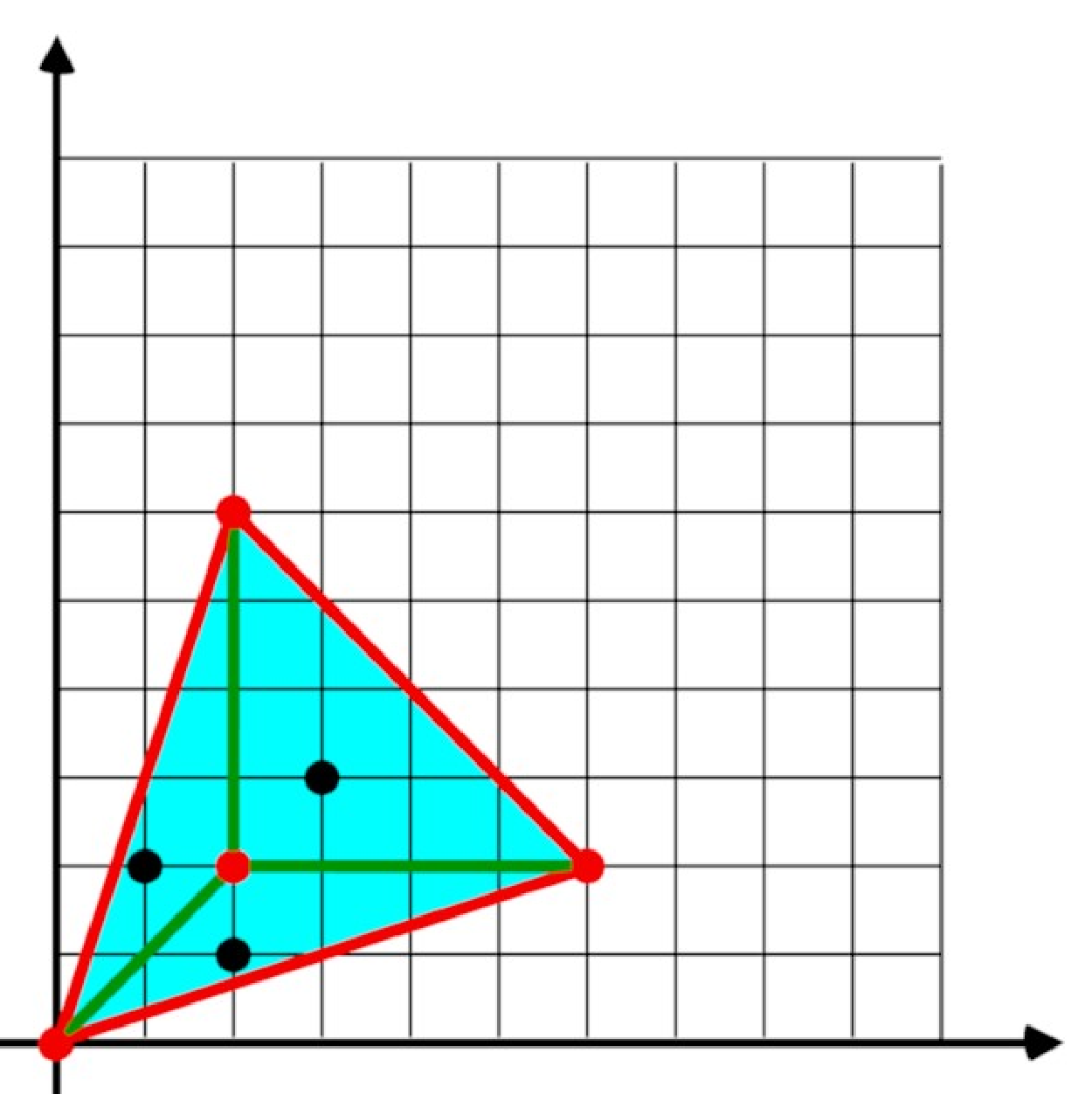} \quad
\fi
\caption{The Newton polytopes of the polynomials in the Examples \ref{Exa:Triangulation1} and \ref{Exa:Triangulation2} and their triangulations.}
\label{Fig:NewtonPolytopes1}
\end{figure}

The next example shows that we can use the approach of this section to take monomial squares into account, which are not vertices of the Newton polytope of the polynomial which we intend to minimize.

\begin{example}
 Let $f = 1 + 3x_1^2x_2^6 + 2x_1^6x_2^2 + 6x_1^2x_2^2 - x_1x_2^2 - 2x_1^2x_2 - 3x_1^3x_2^3$. We choose a triangulation
 \begin{eqnarray*}
  \{\alert{\mathbf{(0,0)}},\alert{\mathbf{(2,2)}},\alert{\mathbf{(2,6)}},(1,2)\},\{\alert{\mathbf{(0,0)}},\alert{\mathbf{(2,2)}},\alert{\mathbf{(6,2)}},(2,1)\},\{\alert{\mathbf{(2,2)}},\alert{\mathbf{(2,6)}},\alert{\mathbf{(6,2)}},(3,3)\}.
 \end{eqnarray*}
 For the corresponding Newton polytope see Figure \ref{Fig:NewtonPolytopes1}. First, we split the coefficients equally among the three triangulations such that we obtain
\begin{eqnarray*}
 g_1 & = & 0.5 + 1.5 x_1^2x_2^6 + 2x_1^2x_2^2 - x_1x_2^2, \\
 g_2 & = & 0.5 + 1 x_1^6x_2^2 + 2x_1^2x_2^2 - 2x_1^2x_2, \\
 g_3 & = &  1.5 x_1^2x_2^6 + 1x_1^6x_2^2 + 2x_1^2x_2^2 - 3x_1^3x_2^3.
\end{eqnarray*}
All three $g_i$ have a joint monomial $x_1^2x_2^2$. For all $i = 1,2,3$ we compute the maximal $b_i > 0$ such that $g_i - b_i x_1^2x_2^2$ is a nonnegative circuit polynomial. This yields a bound for the coefficient of $x_1^2x_2^2$ certifying that $f$ is a SONC and hence nonnegative. We could apply the GP from Corollary \ref{Cor:GP}, but since all $g_i$ are circuit polynomials we can compute the corresponding circuit numbers symbolically. We obtain with Theorem \ref{Thm:Positiv}:
{\footnotesize
\begin{eqnarray*}
 \Theta_{g_1}(1,2) & = & \left(\frac{1/2}{1/2}\right)^{\frac{1}{2}} \cdot \left(\frac{3/2}{1/4}\right)^{\frac{1}{4}} \cdot \left(\frac{2 - b_1}{1/4}\right)^{\frac{1}{4}} \ = \ \sqrt[4]{4 \cdot 4 \cdot 3/2 \cdot (2 - b_1)} \ = \ 2 \sqrt[4]{3/2 \cdot (2 - b_1)}, \\
\Theta_{g_2}(2,1) & = & \left(\frac{1/4}{1/2}\right)^{\frac{1}{2}} \cdot \left(\frac{1/2}{1/4}\right)^{\frac{1}{4}} \cdot \left(\frac{1 - 1/2 \cdot b_2}{1/4}\right)^{\frac{1}{4}} \ = \ \sqrt[4]{1/4 \cdot 2 \cdot 4(1 - 1/2 \cdot b_2)} \ = \ \sqrt[4]{2 - b_2}, \text{ and } \\
\Theta_{g_3}(3,3) & = & \left(\frac{1/2}{1/4}\right)^{\frac{1}{4}} \cdot \left(\frac{1/3}{1/4}\right)^{\frac{1}{4}} \cdot \left(\frac{1/3 (2 - b_3)}{1/2}\right)^{\frac{1}{2}} \ = \ \sqrt[4]{2 \cdot 4/3} \cdot \sqrt{2/3 \cdot (2 - b_3)} \ = \ 2 \sqrt[4]{2/27} \sqrt{2 - b_3}.
\end{eqnarray*}}
This provides solutions:
\begin{eqnarray*}
 2 \sqrt[4]{3/2 \cdot (2 - b_1)} \ \geq \ 1 & \Lera & 3/2 \cdot (2 - b_1) \ \geq \ 1/16 \ \Lera \ b_1 \ \leq \ 47/24, \\
 \sqrt[4]{2 - b_2} \ \geq \ 1 & \Lera & b_2 \ \leq \ 1, \\
 2 \sqrt[4]{2/27} \sqrt{2 - b_3} \ \geq \ 1 & \Lera & \sqrt{2/27} \cdot (2 - b_3) \ \geq \ 1/4 \ \Lera \ b_3 \ \leq \ 2 - \sqrt{27}/(2\sqrt{2}).
\end{eqnarray*}
Hence, we obtain the following bound for the coefficient of $x_1^2x_2^2$:
\begin{eqnarray*}
 6 - (47/24 + 1 + 2 - \sqrt{27}/(4\sqrt{2})) & \approx & 6 - 4.03977468 \ \approx \ 1.96.
\end{eqnarray*}
A double check with the \textsc{CVX} solver for GPs yields the same value in approximately $0.753$ seconds.

We want to compute a bound for $f^*$. We choose the same triangulation and the same split of coefficients as before, but now we optimize the constant term in $g_1$ and $g_2$, and we optimize the coefficient of $x_1^2x_2^6$ in $g_3$. After a runtime of approximately $0.6657$ seconds we obtain optimal values $0.0722, 0.3536$, and $0.3164$. Thus, we found a lower bound for the constant term given by
\begin{eqnarray*}
 m_1^* + m_2^* & \approx & 0.0722 + 0.3536 \ = \ 0.4268.
\end{eqnarray*}
The corresponding optimal SONC decomposition is given by
\begin{eqnarray*}
\begin{array}{lclc}
0.0722 + 1.5x_1^2x_2^6 + 2x_1^2x_2^2-x_1^1 x_2^2 & + & 0.3536 + 1x_1^6x_2^2 + 2x_1^2x_2^2-1x_1^2 x_2^1 & +  \\
0.3164x_1^2x_2^6 + 1x_1^6x_2^2 + 2x_1^2x_2^2-3x_1^3 x_2^3 & & & \\
\end{array}
\end{eqnarray*}
Thus, we obtain a bound for $f^*$ given by
\begin{eqnarray*}
 f_{\rm sonc} & = & 1 - 0.4268 \ = \ 0.5732
\end{eqnarray*}

We make a comparison and optimize $f$ with Lasserre relaxation. This yields an optimal value 
$$f_{\rm sos} \ = \ f^* \ \approx \ 0.8383.$$
Therefore, we want to improve our bound. We keep the triangulation, but we use another distribution of the coefficients among the polynomials $g_1,g_2$ and $g_3$ and define instead
\begin{eqnarray*}
 \tilde{g}_1 & = & 0.25 + 2x_1^2x_2^6 + 1.217x_1^2x_2^2 - 2x_1x_2^2 , \\
 \tilde{g}_2 & = & 0.75 + 1 x_1^6x_2^2 + 3.652x_1^2x_2^2 - 1x_1^2x_2, \\
 \tilde{g}_3 & = &  1 x_1^2x_2^6 + 1x_1^6x_2^2 + 1.13x_1^2x_2^2 - 3x_1^3x_2^3.
\end{eqnarray*}
Again, we optimize $\tilde{g}_1$ and $\tilde{g}_2$ with respect to the constant term and $\tilde{g}_3$ with respect to $x_1^2x_2^6$. We obtain optimal values $0.0801, 0.2616$, and $0.9912$. Thus, we are able to improve our bound for $f^*$ to
\begin{eqnarray*}
 \tilde{f}_{\rm sonc} & \approx & 1 - (0.0801 + 0.2616) \ = \ 0.6583.
\end{eqnarray*}
The corresponding optimal SONC decomposition is given by
\begin{eqnarray*}
\begin{array}{lclc}
0.0801 + 2x_1^2x_2^6 + 1.205x_1^2x_2^2-2x_1^1 x_2^2 & + & 0.2616 + 1x_1^6x_2^2 + 3.615x_1^2x_2^2-1x_1^2 x_2^1 & +  \\
0.991x_1^2x_2^6 + 1x_1^6x_2^2 + 2x_1^2x_2^2-3x_1^3 x_2^3. & & & \\
\end{array}
\end{eqnarray*}
\label{Exa:Triangulation2}
\endexa
\end{example}

\begin{figure}
 \ifpictures
\includegraphics[width=0.45\linewidth]{./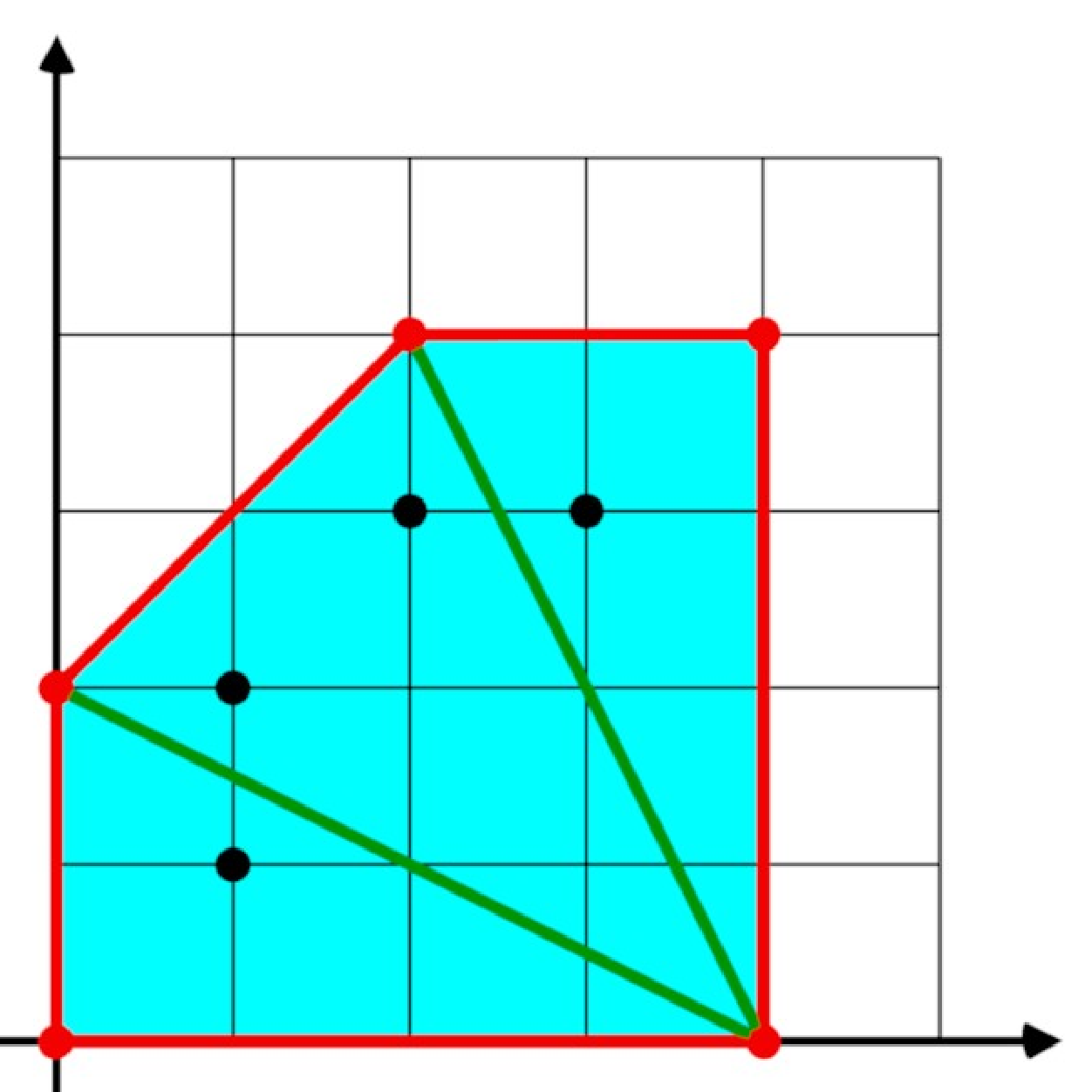} \quad
\includegraphics[width=0.45\linewidth]{./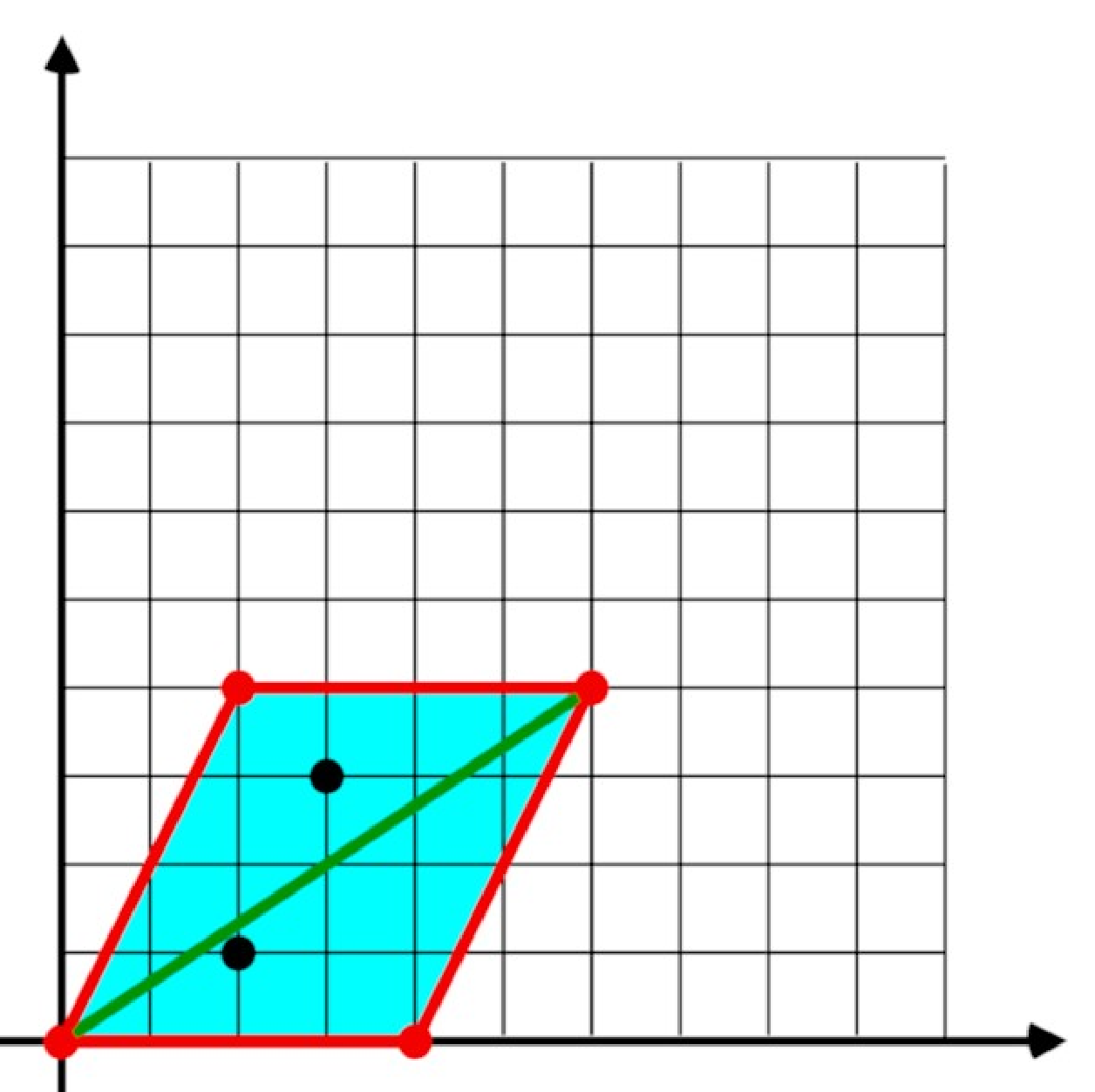} \quad
\fi
\caption{The Newton polytopes of the polynomials in the Examples \ref{Exa:Triangulation3} and \ref{Exa:Triangulation4} and their triangulations.}
\label{Fig:NewtonPolytopes2}
\end{figure}

We discuss a third example which shows that, in the case of global optimization, for the GP/SONC approach it is not necessary to optimize the constant term to obtain a bound for nonnegativity on the coefficients, but that in some cases it can be informative to focus on other vertices of the Newton polytope or on other monomial squares instead.

\begin{example}
Let $f = 1 + x_1^4 + x_2^2 + x_1^2x_2^4 + x_1^4x_2^4 - x_1x_2 - x_1x_2^2 - x_1^2x_2^3 - x_1^3x_2^3$. We choose a triangulation
\begin{eqnarray*}
 \{\alert{\mathbf{(0,0)}},\alert{\mathbf{(0,2)}},\alert{\mathbf{(4,0)}},(1,1)\},\{\alert{\mathbf{(0,2)}},\alert{\mathbf{(2,4)}},\alert{\mathbf{(4,0)}},(1,2),(2,3)\},\{\alert{\mathbf{(2,4)}},\alert{\mathbf{(4,0)}},\alert{\mathbf{(4,4)}},(3,3)\}.
\end{eqnarray*}
Again, we choose a decomposition of coefficients such that their values split equally. We obtain the following ST-polynomials
\begin{eqnarray*}
 g_1 & = & 1 + 1/3 \cdot x_1^4 + 1/2 \cdot x_2^2 - x_1x_2, \\
 g_2 & = & 1/3 \cdot x_1^4 + 1/2 \cdot x_1^2x_2^4 + 1/2 \cdot x_2^2 - x_1x_2^2 - x_1^2x_2^3, \\
 g_3 & = & 1/3 \cdot x_1^4 + 1/2 \cdot x_1^2x_2^4 + x_1^4x_2^4 - x_1^3x_2^3.
\end{eqnarray*}
$g_1$ and $g_3$ are circuit polynomials while $g_2$ contains two negative terms. For the corresponding Newton polytope see Figure \ref{Fig:NewtonPolytopes2}. Note that only the exponent $\alert{\mathbf{(4,0)}}$ is contained in the  support of all three ST-polynomials. Since $\alert{\mathbf{(4,0)}}$ is a monomial square which is a vertex of the convex hull of the three support sets, we optimize the corresponding coefficient in $g_1,g_2$ and $g_3$. Applying the GP from Corollary \ref{Cor:GP} yields optimal values
\begin{eqnarray*}
 m_1^* \ = \ 0.0625, \quad m_2^* \ = \ 4.2867, \quad \text{and} \quad  m_3^* \ = \ 0.0625.
\end{eqnarray*}
Since $m_2^* = 4.2867 > 1/3$ we found no certificate of nonnegativity for $f$. However, we find a SONC decomposition for $f$ if the coefficient $b_{(4,0)}$ of $x_1^4$ is at least $m_1^* + m_2^* + m_3^* = 4.412$. For this minimal choice of $b_{(4,0)}$ a SONC decomposition is given by
\begin{eqnarray*}
\begin{array}{lclc}
0.063x_1^4 + 1 + 0.5x_2^2 -1x_1^1 x_2^1 & + & 2.143x_1^4 + 0.4x_2^2 + 0.1x_1^2x_2^4 -1x_1^1 x_2^2 & + \\
2.143x_1^4 + 0.1x_2^2 + 0.4x_1^2x_2^4 -1x_1^2 x_2^3 & + & 0.063x_1^4 + 0.5x_1^2x_2^4 + 1x_1^4x_2^4 -1x_1^3 x_2^3 & \\
\end{array}
\end{eqnarray*}
\label{Exa:Triangulation3}
\endexa
\end{example}

Finally, we apply the new method to a constrained optimization problem using the methods developed in Section \ref{Sec:RelaxToGP}.

\begin{example}
Let $f=1+x^4+x^2y^4$ and $g=\frac{1}{2}+x^2y-x^6y^4-x^3y^3$. Hence, we obtain $G(\mu)=(1-\frac{1}{2}\mu)+x^4+x^2y^4+\mu x^6y^4 - \mu x^2y+\mu x^3y^3 $. Choosing the triangulation
\begin{eqnarray*}
  \{\alert{\mathbf{(0,0)}},\alert{\mathbf{(4,0)}},\alert{\mathbf{(6,4)}},(2,1)\},\{\alert{\mathbf{(0,0)}},\alert{\mathbf{(6,4)}},\alert{\mathbf{(2,4)}},(3,3)\},
 \end{eqnarray*}
we split the coefficients again, such that their values are equal. For the corresponding Newton polytope see Figure \ref{Fig:NewtonPolytopes2}. We obtain the ST-polynomials
\begin{eqnarray*}
 G_1(\mu) & = & \left(\frac{1}{2}-\frac{1}{4}\mu\right) + x^4 + \frac{1}{2}\mu x^6y^4 -\mu x^2y , \\
 G_2(\mu) & = & \left(\frac{1}{2}-\frac{1}{4}\mu\right) + x^2y^4 + \frac{1}{2}\mu x^6y^4 +\mu x^3y^3.
\end{eqnarray*}
Therefore, we see that the possible $\mu$ values to obtain ST-polynomials are $\mu \in [0,2)$. We optimize both polynomials with respect to the constant term and obtain $m^*_1=m^*_2=0$. The \textsc{CVX} solver yields \texttt{NaN} as an optimal value, since 0 is not positive. However, it solves the problem and computes values $0$ or $\eps < 10^{-200}$ for all variables, such that  $m^*_1=m^*_2=0$ follows. Hence, $f_{\boldsymbol{\alp}(0)}-m^*=1-0=1$ and because all of the assumptions in Theorem \ref{Thm:ConstrainedGPequality} are satisfied we know $s(f,g)=1$.

Checking this optimization problem with Lasserre relaxation, we get $f_{\rm sos}=f_K^*=1$, which approves the optimal value. Both, for the SDP and the GP we have runtimes below 1 second. 

Now, we tackle the same problem, but we multiply every exponent by 10, and we compare the runtimes again. For the GP we obtain the same result and the runtime remains \textbf{below 1 second}. For the SDP we obtain with \textsc{Gloptipoly} $f_{\rm sos}=f_K^*=1$ in approximately $\mathbf{5034.5}$ \textbf{seconds}, i.e. approximately $\mathbf{1.4}$ \textbf{hours}.

In a third approach we tackle the same problem, but we multiply the originally given exponents by 20. In this case \textsc{Gloptipoly} is not able to handle the given matrices anymore. In comparison, we still have a runtime  \textbf{below 1 second} for our GP providing the same bound as before.

Now, we also re-compute the example with the software \textsc{SOSTOOLS} \cite{sostools}, which, in contrast to \textsc{Gloptipoly}, can exploit sparsity patterns. For the original problem size, the runtime is 0.42 seconds, but already in the case that all exponents are multiplied by 10 we have a runtime of 302.7100 seconds, i.e. about \textbf{5 minutes}. 

Finally, we make a last comparison using the software \textsc{SparsePOP} \cite{SparsePOP}, which especially serves to compute sparse problems based on SOS. Here we start with a runtime of 0.4495 seconds for the original problem. For the case of all exponents of $f$ multiplied by $10$ we get a runtime of 81.5438 seconds, i.e. \textbf{1.359 minutes}. 
\label{Exa:Triangulation4}
\endexa
\end{example}

\bibliographystyle{amsalpha}
\bibliography{./gp}

\end{document}